\theoremstyle{plain}                
\newtheorem{thm}{Theorem}
\newtheorem{lem}[thm]{Lemma}        
\newtheorem{prop}[thm]{Proposition}
\newtheorem{cor}[thm]{Corollary}
\theoremstyle{definition}
\newtheorem{defn}[thm]{Definition}
\theoremstyle{remark}
\newtheorem{rem}[thm]{Remark}
\DeclareMathOperator{\Arg}{Arg}   
\providecommand{\abs}[1]{\lvert#1\rvert} 
\providecommand{\norm}[1]{\lVert#1\rVert}
\providecommand{\inner}[1]{\langle#1\rangle} 
\newcommand{\overbar}[1]{\mkern 1.5mu\overline{\mkern-1.5mu#1\mkern-1.5mu}\mkern 1.5mu} 
\newcommand{\vertiii}[1]{{\left\vert\kern-0.25ex\left\vert\kern-0.25ex\left\vert #1 
										 \right\vert\kern-0.25ex\right\vert\kern-0.25ex\right\vert}} 
\def\Re{\mathop{\rm Re}\nolimits}
\def\re{\mathop{\rm Re}\nolimits}
\def\exp{\mathop{\rm e}\nolimits}
\def\CC{\mathbb C}
\def\RR{\mathbb R}
\def\NN{\mathbb N}
\def\DD{\mathbb D}
\def\calA{\mathcal{A}}
\def\calB{\mathcal{B}}
\def\calX{\mathcal{X}}
\def\calL{\mathcal{L}}
\newlength\Colsep
\title{Admissibility of retarded diagonal systems with one-dimensional input space}
\author{Rafa{\l} Kapica\footnote{Faculty of Applied Mathematics, AGH University of Science and Technology, al. Mickiewicza 30, 30-059 Kraków}, \ Jonathan R. Partington\footnote{School of Mathematics, University of Leeds, LS2 9JT Leeds}\  \ and Rados{\l}aw Zawiski\footnote{Department of Automatic Control and Robotics, Silesian University of Technology, ul. Akademicka 16, 44-100 Gliwice}~\footnote{corresponding author}}
\date{ }
\begin{document}

\maketitle
\abstract
We investigate infinite-time admissibility of a control operator $B$ in a Hilbert space state-delayed dynamical system setting of the form $\dot{z}(t)=Az(t)+A_1 z(t-\tau)+Bu(t)$, where $A$ generates a diagonal $C_0$-semigroup, $A_1\in\calL(X)$ is also diagonal and $u\in L^2(0,\infty;\CC)$. Our approach is based on the Laplace embedding between $L^2$ and the Hardy space $H^2(\CC_+)$. The results are expressed in terms of the eigenvalues of $A$ and $A_1$ and  the sequence representing the control operator. 

\noindent {\bf Keywords:}
admissibility, state delay, infinite-dimensional diagonal system

\noindent {\bf 2020 Subject Classification:} 34K30, 34K35, 47D06, 93C23 

\section{Introduction}
State-delayed differential equations arise in many areas of applied mathematics, which is related to the fact that in the real world there is an inherent input-output delay in every physical system. Among sources of delay we have the spatial character of the system in relation to signal propagation, measurements processing or hatching time in biological systems, to name a few. Whenever the delay has a considerable influence on the outcome of the process it has to be incorporated into a process's mathematical model. Hence, an understanding of a state-delayed system, even in a linear case, plays a crucial role in the analysis and control of dynamical systems, particularly when the asymptotic behaviour is concerned. 

In order to cover a possibly large area of dynamical systems our analysis uses an abstract description. Hence the retarded state-delayed dynamical system we are interested in has an abstract representation given by
\begin{equation}\label{eq:retarded_non-autonomous_system}
\left\{\begin{array}{ll}
        \dot{z}(t)=Az(t)+A_{1}z(t-\tau)+Bu(t)\\
        z(0)=x\\
        z_0=f,
       \end{array}
\right.
\end{equation}
where the state space $X$ is a Hilbert space, $A:D(A)\subset X\to X$ is a closed, densely defined generator of a $C_0$-semigroup $(T(t))_{t\geq0}$ on $X$, $A_1\in\calL(X)$ and $0<\tau<\infty$ is a fixed delay (some discussions of the difficulties inherent in taking $A_1$ unbounded appear in Subsection \ref{subsec:Direct state-delayed diagonal systems}). The input function is $u\in L^2(0,\infty;\CC)$, $B$ is the control operator, the pair $x\in D(A)$ and $f\in L^2(-\tau,0;X)$ forms the initial condition. We also assume that $X$ possesses a sequence of normalized eigenvectors $(\phi_k)_{k\in\NN}$ forming a Riesz basis, with associated eigenvalues $(\lambda_k)_{k\in\NN}$. 

We analyse \eqref{eq:retarded_non-autonomous_system} from the perspective of infinite-time admissibility which, roughly speaking, asserts whether a solution $z$ of \eqref{eq:retarded_non-autonomous_system} follows a required type of trajectory. A more detailed description of admissibility requires an introduction of pivot duality and some related norm inequalities. For that reason we postpone it until Subsection~\ref{subsec:def_of_the_admissibility_problem}, where all these elements are already introduced for the setting within which we analyse \eqref{eq:retarded_non-autonomous_system}. 

With regard to previous admissibility results, necessary and sufficient conditions for infinite-time admissibility of $B$ in the undelayed case of \eqref{eq:retarded_non-autonomous_system}, under an assumption of diagonal generator $(A,D(A))$, were analysed e.g. using Carleson measures e.g.  in \cite{Ho_Russell_1983,Ho_Russell_1983_err,Weiss_1988}. 
Those results were extended to normal semigroups \cite{Weiss_1999}, then generalized to the case when $u\in L^2(0,\infty;t^{\alpha}dt)$ for $\alpha\in(-1,0)$ in \cite{Wynn_2010} and further to the case $u\in L^2(0,\infty;w(t)dt)$ in \cite{Jacob_Partington_Pott_2013,Jacob_Partington_Pott_2014}. 
For a thorough presentation of admissibility results, not restricted to diagonal systems, for the undelayed case we refer the reader to \cite{Jacob_Partington_2004} and a rich list of references therein. 

For the delayed case, in contrast to the undelayed one, a different setting is required. Some of the first studies in such setting are \cite{Grabowski_Callier_1996} and \cite{Engel_1999}, and these form a basis for \cite{Batkai_Piazzera}. In this article we follow the latter one in developing a setting for admissibility analysis. We also build on \cite{Partington_Zawiski_2019} where a similar setting was used to present admissibility results for a simplified version of \eqref{eq:retarded_non-autonomous_system}, that is with a diagonal generator $(A,D(A))$ with the delay in its argument (see the Examples section below). 

In fact, as the system analysed in \cite{Partington_Zawiski_2019} is a special case of \eqref{eq:retarded_non-autonomous_system}, the results presented here contain those of \cite{Partington_Zawiski_2019}. The most important drawback of results in \cite{Partington_Zawiski_2019} is that the conditions leading to sufficiency for infinite-time admissibility there imply also that the semigroup generator is bounded. Thus, to obtain some results for unbounded generators one is forced to go though the so-called reciprocal systems. Results presented below are free from such limitation and can be applied to unbounded diagonal generators directly, as shown in the Examples section.

This paper is organised as follows. Section~\ref{sec:2} defines the notation and provides preliminary results. These include a general delayed equation setting, which is applied later to the problem of our interest and the problem of infinite-time admissibility. Section~\ref{sec:3} shows how the general setting looks for a particular case of retarded diagonal case. It then shows a component-wise analysis of infinite-time admissibility and provides results for the complete system. Section~\ref{sec:4} gives examples.

\section{Preliminaries}\label{sec:2}

In this paper we use Sobolev spaces (see e.g. \cite[Chapter 5]{Evans})
$W^{1,2}(J,X):=\{f\in L^2(J,X):\frac{d}{dt}f\in L^2(J,X)\}$ and $W_{0}^{1,2}(J,X):=\{f\in W^{1,2}(J,X):\ f(\partial J)=0\}$, where $\frac{d}{dt}f$ is a weak derivative of $f$ and $J$ is an interval with boundary $\partial J$. 

For any $\alpha\in\mathbb{R}$ we denote the following half-planes
\[
\underleftarrow{\CC}_{\alpha}:=\{s\in\CC:\re s<\alpha\},\quad\underrightarrow{\CC}_{\alpha}:=\{s\in\CC:\re s>\alpha\},
\]
with a simplification for two special cases, namely $\CC_{-}:=\underleftarrow{\CC}_{0}$ and $\CC_{+}:=\underrightarrow{\CC}_{0}$.
We make use of the Hardy space $H^2(\CC_+)$ that consists of all analytic functions $f:\CC_+\rightarrow\CC$ for which 
\begin{equation}\label{eqn defn Hardy space element}
\sup_{\alpha>0}\int_{-\infty}^{\infty}\abs{f(\alpha+i\omega)}^2 \, d\omega<\infty.
\end{equation}
If $f\in H^2(\CC_+)$ then for a.e. $\omega\in\RR$ the limit
\begin{equation}\label{eqn defn boundary trace}
f^*(i\omega)=\lim_{\alpha\downarrow 0}f(\alpha+i\omega)
\end{equation}
exists and defines a function  $f^*\in L^2(i\RR)$ called the \textit{boundary trace} of $f$.  Using boundary traces $H^2(\CC_+)$ is made into a Hilbert space with the inner product defined as
\begin{equation}\label{eq:inner_product_on_H2(C+)}
\inner{f,g}_{H^2(\CC_+)}:=\inner{f^*,g^*}_{L^2(i\RR)}:=\frac{1}{2\pi}\int_{-\infty}^{+\infty}f^*(i\omega)\bar{g}^*(i\omega) \, d\omega\quad\forall f,g\in H^2(\CC_+).
\end{equation}
For more information about Hardy spaces see \cite{Partington_1988}, \cite{Garnett} or \cite{Koosis}. We also make use of the Paley--Wiener Theorem (see \cite[Chapter 19]{Rudin_1987} for the scalar version or \cite[Theorem 1.8.3]{Arendt_et_al} for the vector-valued one)
\begin{thm}[Paley--Wiener]\label{thm:Paley-Wiener}
Let $Y$ be a Hilbert space. Then the Laplace transform $\mathcal{L}:L^2(0,\infty;Y)\rightarrow H^2(\CC_+;Y)$ is an isometric isomorphism.
\end{thm}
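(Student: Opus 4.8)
The plan is to reduce to the scalar case $Y=\CC$ and then recover the general statement by tensoring with the identity on $Y$. For the scalar case I would start from the defining integral $(\mathcal{L}f)(s)=\int_0^\infty f(t)e^{-st}\,dt$ for $f\in L^2(0,\infty;\CC)$ and $s\in\CC_+$; note that for $\re s>0$ the integrand lies in $L^1(0,\infty)$ by Cauchy--Schwarz (since $e^{-s\,\cdot}\in L^2(0,\infty)$), so $\mathcal{L}f$ is well defined, and differentiation under the integral sign, justified by local domination on compact subsets of $\CC_+$, shows it is analytic on $\CC_+$. The heart of the isometry is Plancherel's theorem: fixing $\alpha>0$ and writing $g_\alpha(t)=f(t)e^{-\alpha t}\mathbf{1}_{(0,\infty)}(t)$, one has $(\mathcal{L}f)(\alpha+i\omega)=\widehat{g_\alpha}(\omega)$, whence
\[
\frac{1}{2\pi}\int_{-\infty}^{\infty}\abs{(\mathcal{L}f)(\alpha+i\omega)}^2\,d\omega=\int_0^\infty\abs{f(t)}^2 e^{-2\alpha t}\,dt.
\]
Taking the supremum over $\alpha>0$, equivalently letting $\alpha\downarrow0$ and invoking monotone convergence, shows simultaneously that $\mathcal{L}f\in H^2(\CC_+)$ and that $\norm{\mathcal{L}f}_{H^2(\CC_+)}=\norm{f}_{L^2(0,\infty)}$. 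In particular $\mathcal{L}$ is a well-defined isometry, hence injective.

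It remains to prove surjectivity, which I expect to be the main obstacle. Given $F\in H^2(\CC_+)$ with boundary trace $F^*\in L^2(i\RR)$ as in \eqref{eqn defn boundary trace}, I would construct a candidate preimage by inverting the Fourier transform: for each $\alpha>0$ set $g_\alpha(t)=\tfrac{1}{2\pi}\int_{-\infty}^{\infty}F(\alpha+i\omega)e^{i\omega t}\,d\omega$. Using analyticity of $F$ together with the uniform $H^2$ bound \eqref{eqn defn Hardy space element}, a contour-shift (Cauchy's theorem) argument shows that $e^{\alpha t}g_\alpha(t)$ is independent of $\alpha$, so it equals a single function $f(t)$, i.e.\ $g_\alpha(t)=e^{-\alpha t}f(t)$. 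The delicate point is then that $f$ must be supported on the positive half-line: if $f(t)\neq0$ on a set of positive measure in $t<0$, the factor $e^{-\alpha t}\to\infty$ as $\alpha\to\infty$ would force $\norm{g_\alpha}_{L^2}=\norm{F(\alpha+i\cdot)}_{L^2}/\sqrt{2\pi}$ to blow up, contradicting the uniform bound from \eqref{eqn defn Hardy space element}. Once this support property is established, monotone convergence applied to that same bound gives $f\in L^2(0,\infty)$, and $\mathcal{L}f=F$ follows since $(\mathcal{L}f)(\alpha+i\omega)=\widehat{g_\alpha}(\omega)=F(\alpha+i\omega)$ for every $\alpha>0$; thus $\Ran\mathcal{L}=H^2(\CC_+)$.

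Finally, for a general Hilbert space $Y$ I would pass from the scalar result via the canonical isometric identifications $L^2(0,\infty;Y)\cong L^2(0,\infty;\CC)\otimes Y$ and $H^2(\CC_+;Y)\cong H^2(\CC_+)\otimes Y$. Under these the vector-valued Laplace transform is $\mathcal{L}\otimes I_Y$, and since $\mathcal{L}$ is an isometric isomorphism in the scalar case, so is its tensor product with the identity. Concretely, choosing an orthonormal basis $(e_j)$ of (the separable reduction generated by a given function's range in) $Y$ and decomposing $f=\sum_j f_j e_j$ reduces both norms to $\sum_j\norm{f_j}^2$ and $\sum_j\norm{\mathcal{L}f_j}^2$, so the scalar identity is applied coordinatewise. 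This completes the argument.
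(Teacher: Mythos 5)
The paper itself contains no proof of this theorem: it is quoted as a known result, with \cite[Chapter 19]{Rudin_1987} cited for the scalar case and \cite[Theorem 1.8.3]{Arendt_et_al} for the vector-valued one, so there is no internal argument to compare yours against. What you have written is, in essence, the classical proof from exactly those sources: Plancherel applied to $t\mapsto f(t)e^{-\alpha t}$ for the isometry, Fourier inversion on vertical lines plus a contour shift and a Fatou-type support argument for surjectivity (this is Rudin's Theorem 19.2), and a coordinatewise tensor reduction for Hilbert-valued $Y$ --- which is indeed why the statement is specific to Hilbert space targets and fails for general Banach $Y$. Your outline is correct, and your support argument for $f$ is sound once phrased via Fatou: the uniform bound on $\int_{-\infty}^{0}\abs{f(t)}^2 e^{-2\alpha t}\,dt$ as $\alpha\to\infty$ forces $f=0$ a.e.\ on $(-\infty,0)$.

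Two steps deserve explicit justification in a full write-up. First, you twice use implicitly that for $F$ analytic on $\CC_+$ the means $\alpha\mapsto\int_{\RR}\abs{F(\alpha+i\omega)}^2\,d\omega$ are nonincreasing in $\alpha$: once to identify the supremum in \eqref{eqn defn Hardy space element} with the boundary-trace norm \eqref{eq:inner_product_on_H2(C+)} that the paper actually uses (otherwise ``isometric'' is proved for a different norm than the stated one), and once at the very end, where your reduction needs $\sup_{\alpha}\sum_j = \sum_j \sup_{\alpha}$ for the coordinate functions $f_j$; only $\sup\sum\leq\sum\sup$ is automatic for arbitrary families, and the reverse inequality requires monotone convergence along $\alpha\downarrow 0$, which is exactly where this monotonicity enters. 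Second, the clause ``a contour-shift (Cauchy's theorem) argument shows that $e^{\alpha t}g_\alpha(t)$ is independent of $\alpha$'' compresses the technical heart of surjectivity: the horizontal side integrals of the rectangles need not vanish for every height, and one must select a sequence of heights along which they do (as Rudin does, using the uniform $L^2$ bound on vertical lines). Neither point is a wrong idea --- both are standard --- but they are where the actual work of the proof lives.
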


\subsection{The delayed equation setting}

We follow a general setting for a state-delayed system from \cite[Chapter 3.1]{Batkai_Piazzera}, described for a diagonal case also in \cite{Partington_Zawiski_2019}. And so, to include the influence of the delay we extend the state space of \eqref{eq:retarded_non-autonomous_system}. To that end consider a trajectory of \eqref{eq:retarded_non-autonomous_system} given by  $z:[-\tau,\infty)\rightarrow X$. For each $t\geq0$ we call $z_{t}:[-\tau,0]\rightarrow X$, $z_{t}(\sigma):=z(t+\sigma)$ a \textit{history segment} with respect to $t\geq0$. With history segments we consider a so-called  \textit{history function} of $z$ denoted by $h_{z}:[0,\infty)\rightarrow L^2(-\tau,0;X)$, $h_z(t):=z_{t}$. In \cite[Lemma 3.4]{Batkai_Piazzera} we find the following

\begin{prop}\label{prop:history_function_derivative}
Let $1\leq p<\infty$ and $z\in W_{loc}^{1,p}(-\tau,\infty;X)$. Then the history function $h_z:t\rightarrow z_t$ of $z$ is continuously differentiable from $\mathbb{R}_+$ into $L^p(-\tau,0;X)$ with derivative 
\[
\frac{\partial}{\partial t}h_{z}(t)=\frac{\partial}{\partial\sigma}z_{t}.
\]
\end{prop}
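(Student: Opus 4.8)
The plan is to recognise the history function as the orbit of $z'$ under translation, so that its derivative ought to be nothing but the translate $\sigma\mapsto z'(t+\sigma)$, where $z'\in L^p_{loc}(-\tau,\infty;X)$ denotes the weak derivative of $z$. Accordingly I would reduce the statement to two classical facts about Bochner spaces valid precisely in the range $1\le p<\infty$: first, the vector-valued fundamental theorem of calculus, by which a $W^{1,p}_{loc}$ function admits an absolutely continuous representative satisfying $z(b)-z(a)=\int_a^b z'(r)\,dr$; and second, the strong continuity of translation on $L^p$, namely $\|g(\cdot+r)-g(\cdot)\|_{L^p}\to0$ as $r\to0$ for every $g\in L^p$. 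Everything else is an application of Minkowski's integral inequality. Throughout, $z'(t+\cdot)$ denotes the element $\sigma\mapsto z'(t+\sigma)$ of $L^p(-\tau,0;X)$, which indeed equals $\frac{\partial}{\partial\sigma}z_t$ since $z_t(\sigma)=z(t+\sigma)$.

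For differentiability at a fixed $t\ge0$ I would estimate the deviation of the difference quotient of $h_z$ from this candidate. Setting
\[
\Delta_h:=\left\|\frac{h_z(t+h)-h_z(t)}{h}-z'(t+\cdot)\right\|_{L^p(-\tau,0;X)}
\]
and using the fundamental theorem of calculus to write $z(t+\sigma+h)-z(t+\sigma)=\int_0^h z'(t+\sigma+r)\,dr$ (for $h$ of either sign), the integrand rearranges as
\[
\frac{z(t+\sigma+h)-z(t+\sigma)}{h}-z'(t+\sigma)=\frac1h\int_0^h\bigl[z'(t+\sigma+r)-z'(t+\sigma)\bigr]\,dr .
\]
Taking the $X$-norm and then the $L^p$-norm in $\sigma$, Minkowski's integral inequality yields
\[
\Delta_h\le\frac{1}{|h|}\left|\int_0^h\omega(r)\,dr\right|,\qquad \omega(r):=\|z'(t+\cdot+r)-z'(t+\cdot)\|_{L^p(-\tau,0;X)} .
\]
Strong continuity of translation gives $\omega(r)\to0$ as $r\to0$, so the average of $\omega$ over the interval with endpoints $0$ and $h$ tends to $0$; hence $\Delta_h\to0$. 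This shows $h_z$ is differentiable with $\frac{\partial}{\partial t}h_z(t)=z'(t+\cdot)=\frac{\partial}{\partial\sigma}z_t$.

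For continuity of the derivative I would again invoke strong continuity of translation: for $t,t'$ in a fixed compact subinterval of $\mathbb{R}_+$, all arguments $t+\sigma,\,t'+\sigma$ remain in a compact subset of $(-\tau,\infty)$ on which $z'$ is genuinely in $L^p$, and
\[
\left\|\frac{\partial}{\partial t}h_z(t')-\frac{\partial}{\partial t}h_z(t)\right\|_{L^p(-\tau,0;X)}=\|z'(t'+\cdot)-z'(t+\cdot)\|_{L^p(-\tau,0;X)}\xrightarrow[t'\to t]{}0 .
\]
Thus $t\mapsto\frac{\partial}{\partial t}h_z(t)$ is continuous and $h_z\in C^1(\mathbb{R}_+;L^p(-\tau,0;X))$.

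The single genuine input, and the step I would treat as the crux, is the strong continuity of the translation group on $L^p(-\tau,\infty;X)$ for $1\le p<\infty$, together with the averaging limit for $\omega$; this is exactly where the hypothesis $p<\infty$ is essential, translation failing to be strongly continuous on $L^\infty$. In the Bochner-space setting this is established in the usual way, by approximating $z'$ in $L^p$ by continuous compactly supported $X$-valued functions, for which translation continuity is immediate from uniform continuity, and then passing to the limit. The remaining ingredients, the vector-valued fundamental theorem of calculus and Minkowski's integral inequality, are routine.
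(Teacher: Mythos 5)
Your proof is correct. Note that the paper itself offers no argument for this proposition at all: it is quoted verbatim from B\'atkai--Piazzera \cite{Batkai_Piazzera} (their Lemma 3.4), so there is no in-paper proof to compare against; your write-up supplies a complete, self-contained proof of the cited fact, and it follows the standard route (vector-valued fundamental theorem of calculus, Minkowski's integral inequality, strong continuity of translations on $L^p$ for $1\le p<\infty$), which is essentially how the result is established in the delay-equations literature. The difference-quotient estimate $\Delta_h\le\frac{1}{|h|}\bigl|\int_0^h\omega(r)\,dr\bigr|$ with $\omega(r)\to0$ is exactly the right reduction, and you correctly identify translation continuity as the one nontrivial input and the place where $p<\infty$ enters. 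Two cosmetic points you may wish to make explicit: at $t=0$ the derivative is necessarily one-sided ($h\downarrow0$), since for $h<0$ the arguments $t+\sigma+h$ would leave $[-\tau,\infty)$; and since $z'$ is only locally $p$-integrable, the quantity $\omega(r)$ and the translation-continuity argument should be read on a fixed compact interval such as $[-\tau,t+1]$, as you do note in the continuity step but not in the differentiability step. Neither affects the validity of the argument.
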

To remain in the Hilbert space setting we limit ourselves to $p=2$ and take 
\begin{equation}\label{eq:extended_state_space_def}
\calX:=X\times L^2(-\tau,0;X)
\end{equation}
as the aforementioned state space extension with an inner product
\begin{equation}\label{eq:inner_product_on_Cartesian_product_def}
 \bigg\langle\binom{x}{f},\binom{y}{g}\bigg\rangle_{\mathcal{X}}:=\langle x,y\rangle_{X}+\langle f,g\rangle_{L^2(-\tau,0;X)}.
\end{equation}
Then $(\mathcal{X},\|\cdot\|_{\mathcal{X}})$ becomes a Hilbert space with the norm $\|\binom{x}{f}\|_{\mathcal{X}}^2=\|x\|_{X}^{2}+\|f\|_{L^2}^{2}$.  We assume that a linear and bounded \textit{delay operator} $\Psi:W^{1,2}(-\tau,0;X)\to X$ acts on history segments $z_t$ and thus consider \eqref{eq:retarded_non-autonomous_system} in the form
\begin{equation}\label{eq:delay_non-autonomous_diff_eq}
\left\{\begin{array}{ll}
        \dot{z}(t)=Az(t)+\Psi z_t+Bu(t)\\
        z(0)=x,        \\
        z_0=f,		\\
       \end{array}
\right.
\end{equation}
where the pair $x\in D(A)$ and $f\in L^2(-\tau,0;X)$ forms an initial condition. 
A particular choice of $\Psi$ can be found in \eqref{eq:delay_operator_def} below.
Due to Proposition~\ref{prop:history_function_derivative}, system \eqref{eq:delay_non-autonomous_diff_eq} may be written as an abstract Cauchy problem
\begin{equation}\label{eq:non-autonomous_abstract_Cauchy_problem_def}
\left\{\begin{array}{ll}
        \dot{v}(t)=\mathcal{A}v(t)+\mathcal{B}u(t)\\
        v(0)=\binom{x}{f},        \\
       \end{array}
\right.
\end{equation} 
where $v:[0,\infty)\ni t\mapsto\binom{z(t)}{z_t}\in\mathcal{X}$ 
and $\mathcal{A}$ is a linear operator on $D(\calA)\subset\mathcal{X}$, where
\begin{equation}\label{eq:abstract_A_domain_def}
D(\mathcal{A}):=\bigg\{\binom{x}{f}\in D(A)\times W^{1,2}(-\tau,0;X):\ f(0)=x\bigg\},
\end{equation}
\begin{equation}\label{eq:abstract_A_def}
\mathcal{A}:=\left(\begin{array}{cc} 
								A & \Psi \\ 
								0 & \frac{d}{d\sigma}	
						\end{array}\right),
\end{equation}
and the control operator is $\mathcal{B}=\binom{B}{0}$. Operator $(\mathcal{A},D(\mathcal{A}))$ is closed and densely defined on $\mathcal{X}$ \cite[Lemma 3.6]{Batkai_Piazzera}. Note that up to this moment we do not need to know more about $\Psi$.

Concerning the resolvent of $(\mathcal{A},D(\mathcal{A}))$, let 
\[
A_{0}:=\frac{d}{d\sigma},\qquad D(A_{0})=\{z\in W^{1,2}(-\tau,0;X):z(0)=0\},
\]
be the generator of a nilpotent left shift semigroup on $L^2(-\tau,0;X)$. For $s\in\mathbb{C}$ define $\epsilon_{s}:[-\tau,0]\rightarrow\mathbb{C}$, $\epsilon_{s}(\sigma):=e^{s \sigma}$. Define also $\Psi_{s}\in\mathcal{L}(D(A),X)$, $\Psi_{s}x:=\Psi(\epsilon_{s}(\cdot)x)$. Then \cite[Proposition 3.19]{Batkai_Piazzera} provides
\begin{prop}\label{prop abstract A resolvent operator}
For $s\in\mathbb{C}$ and for all $1\leq p<\infty$ we have 
\[
s\in\rho(\mathcal{A}) \quad \hbox{if and only if} \quad s\in\rho(A+\Psi_{s}).
\]
Moreover, for $s\in\rho(\mathcal{A})$ the resolvent $R(s,\mathcal{A})$ is given by
\begin{equation}\label{eqn relosvent of abstract A}
R(s,\mathcal{A})=\left(\begin{array}{ll} 
	R(s,A+\Psi_{s}) & R(s,A+\Psi_{s})\Psi R(s,A_0) \\ 
	\epsilon_{s}R(s,A+\Psi_{s}) & (\epsilon_{s}R(s,A+\Psi_{s})\Psi+I)	R(s,A_0)
								  \end{array}\right).
\end{equation}

\end{prop}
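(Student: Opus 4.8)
The plan is to compute $R(s,\calA)$ directly by solving, for a fixed $s\in\CC$ and arbitrary data $\binom{y}{g}\in\calX$, the resolvent equation $(s-\calA)\binom{x}{f}=\binom{y}{g}$ and then reading off the inverse. Unpacking the block form \eqref{eq:abstract_A_def} together with the domain constraint $f(0)=x$ from \eqref{eq:abstract_A_domain_def}, this is equivalent to the system $sx-Ax-\Psi f=y$ and $sf-f'=g$, with $x\in D(A)$ and $f\in W^{1,2}(-\tau,0;X)$ satisfying $f(0)=x$. The triangular structure invites solving the $\sigma$-equation first and feeding the result into the $X$-equation.

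First I would solve the second, scalar-in-$\sigma$, equation. Since $A_0=\frac{d}{d\sigma}$ generates a nilpotent semigroup its spectrum is empty, so $R(s,A_0)$ is defined for \emph{every} $s\in\CC$ and represents the unique $W^{1,2}$ solution of $sh-h'=g$ with $h(0)=0$. The general $W^{1,2}$ solution of $sf-f'=g$ is then obtained by adding the homogeneous solution carrying the prescribed boundary value, giving $f=\epsilon_s x+R(s,A_0)g$; here one checks that $\epsilon_s x$ is precisely the $W^{1,2}$ function $\sigma\mapsto e^{s\sigma}x$, so that $f(0)=x$ holds automatically and $f\in W^{1,2}$.

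Next I would substitute this expression for $f$ into the first equation. Using the definition $\Psi_s x=\Psi(\epsilon_s x)$ one gets $\Psi f=\Psi_s x+\Psi R(s,A_0)g$, and the first equation collapses to $(s-A-\Psi_s)x=y+\Psi R(s,A_0)g$. This single identity is the crux: it shows that the resolvent problem for $\calA$ is solvable, for all right-hand sides, exactly when $s-A-\Psi_s$ is invertible. Assuming $s\in\rho(A+\Psi_s)$, I invert to obtain $x=R(s,A+\Psi_s)\bigl(y+\Psi R(s,A_0)g\bigr)$ and back-substitute into $f=\epsilon_s x+R(s,A_0)g$; collecting the coefficients of $y$ and of $g$ reproduces the $2\times2$ matrix \eqref{eqn relosvent of abstract A}. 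It then remains to verify that the operator so defined is bounded on $\calX$ and maps into $D(\calA)$, which follows from boundedness of each block together with the fact that the constructed $\binom{x}{f}$ satisfies $x\in D(A)$, $f\in W^{1,2}$ and $f(0)=x$ by construction.

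For the converse implication I would run the argument backwards with $g=0$: given $s\in\rho(\calA)$ and $y\in X$, solving $(s-\calA)\binom{x}{f}=\binom{y}{0}$ forces $f=\epsilon_s x$ and hence $(s-A-\Psi_s)x=y$, so $s-A-\Psi_s$ is bijective onto $X$; since $A$ is closed and $\Psi_s$ is $A$-bounded, the operator $A+\Psi_s$ is closed, and the bounded inverse theorem yields $s\in\rho(A+\Psi_s)$. The main obstacle I anticipate is bookkeeping rather than conceptual: ensuring at each stage that the intermediate functions genuinely lie in $W^{1,2}(-\tau,0;X)$ with the correct boundary value, so that one stays inside $D(\calA)$, and correctly handling the relative ($A$-)boundedness of $\Psi_s$ when transferring bijectivity to bounded invertibility in the converse.
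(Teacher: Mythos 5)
Your proposal is correct, but note that the paper contains no proof of this statement to compare against: Proposition~\ref{prop abstract A resolvent operator} is quoted directly from B\'atk\'ai--Piazzera \cite[Proposition 3.19]{Batkai_Piazzera}. Your direct computation --- exploiting the triangular structure, solving $sf-f'=g$ with $f(0)=x$ to get $f=\epsilon_s x+R(s,A_0)g$ (legitimate for every $s\in\CC$ since the nilpotent shift generator $A_0$ has empty spectrum), substituting to reduce the first row to $(s-A-\Psi_s)x=y+\Psi R(s,A_0)g$, and reading off the $2\times 2$ matrix --- is precisely the standard argument behind that citation, so at the level of strategy you have reconstructed the intended proof.

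One step in your converse is misstated, though easily repaired: the claim that ``$A$ closed and $\Psi_s$ $A$-bounded implies $A+\Psi_s$ closed'' is false in general --- a relatively bounded perturbation preserves closedness only when the relative bound is less than $1$. Two clean fixes are available. First, in this setting $\Psi_s$ is in fact bounded on all of $X$, not merely $A$-bounded: for any $x\in X$ the function $\sigma\mapsto e^{s\sigma}x$ lies in $W^{1,2}(-\tau,0;X)$ with $\norm{\epsilon_s x}_{W^{1,2}}\leq C(1+\abs{s})\norm{x}_X$, and $\Psi\in\calL(W^{1,2}(-\tau,0;X),X)$, so $\Psi_s\in\calL(X)$ and $A+\Psi_s$ is closed as a bounded perturbation of a closed operator. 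Second, you can bypass the bounded inverse theorem entirely: the inverse of $sI-A-\Psi_s$ equals $\pi_1 R(s,\calA)\iota$, where $\iota y=\binom{y}{0}$ and $\pi_1$ is the projection onto the first coordinate, hence it is bounded because $R(s,\calA)$ is; and an everywhere-defined bounded two-sided inverse automatically forces $A+\Psi_s$ to be closed. Finally, for bijectivity in the converse you should make explicit the (implicit) injectivity step: if $(s-A-\Psi_s)x=0$ then $\binom{x}{\epsilon_s x}\in\ker(s-\calA)=\{0\}$, so $x=0$. With these touch-ups the argument is complete.
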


In the sequel we make use of Sobolev towers, also known as a duality with a pivot (see \cite[Chapter 2]{Tucsnak_Weiss} or \cite[Chapter II.5]{Engel_Nagel}). To this end we have 
\begin{defn}\label{def:Sobolev_tower}
Let $\beta\in\rho(A)$ and denote $(X_1,\norm{\cdot}_1):=(D(A),\norm{\cdot}_1)$ with $\norm{x}_1:=\norm{(\beta I-A)x}\ (x\in D(A))$. Similarly, we set $\norm{x}_{-1}:=\norm{(\beta I-A)^{-1}x}\ (x\in X)$. Then the space $(X_{-1},\norm{\cdot}_{-1})$ denotes the completion of $X$ under the norm $\norm{\cdot}_{-1}$. For $t\geq0$ we define $T_{-1}(t)$ as the continuous extension of $T(t)$ to the space  $(X_{-1},\norm{\cdot}_{-1})$. 
\end{defn}
The adjoint generator plays an important role in the pivot duality setting. Thus we take
\begin{defn}\label{def:adjoint_of_a_generator}
Let $A:D(A)\rightarrow X$ be a densely defined operator. The \textit{adjoint} of $(A,D(A))$, denoted $(A^*,D(A^*))$, is defined on \index{Latin@\textbf{Latin symbols}!A@$A^*$}
\begin{equation}\label{eq:adjoint_of_a_generator domain}
D(A^*):=\{y\in X: \hbox{the functional}\ X\ni x\mapsto\inner{Ax,y}\ \hbox{is bounded}\}.
\end{equation} 
Since $D(A)$ is dense in $X$ the functional in \eqref{eq:adjoint_of_a_generator domain} has a unique bounded extension to $X$. By the Riesz representation theorem there exists a unique $w\in X$ such that $\inner{Ax,y}=\inner{x,w}$. Then we define $A^*y:=w$ so that 
\begin{equation}\label{eq:adjoint_of_a_generator_form}
\inner{Ax,y}=\inner{x,A^*y}\qquad \forall\ x\in D(A)\quad \forall\ y\in D(A^*).
\end{equation}
\end{defn} 
We have the following (see \cite[Prop. 2.10.2]{Tucsnak_Weiss})
\begin{prop}\label{prop:Sobolev_tower_adjoint}
With the notation of Definition~\ref{def:Sobolev_tower}  let $(A^*,D(A^*))$ be the adjoint of $(A,D(A))$. Then $\overbar{\beta}\in\rho(A^*)$, $(X_1^d,\norm{\cdot}_1^d):=(D(A^*),\norm{\cdot}_1^d)$ with $\norm{x}_1^d:=\norm{(\overbar{\beta}I-A^*)x}\ (x\in D(A^*))$ is a Hilbert space and $X_{-1}$ is the dual of $X_1^d$ with respect to the pivot space $X$, that is $X_{-1}=(D(A^*))'$.
\end{prop}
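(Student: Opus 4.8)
The plan is to establish the three assertions in turn, reducing everything to the single fact that $\beta I-A$ is a closed bijection from $X_1$ onto $X$ with bounded inverse.

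First I would prove $\overbar{\beta}\in\rho(A^*)$. Since $\beta\in\rho(A)$, the operator $R(\beta,A)=(\beta I-A)^{-1}$ lies in $\mathcal{L}(X)$, so its Hilbert-space adjoint $R(\beta,A)^*\in\mathcal{L}(X)$ is again a bounded bijection. Using that $A$ is densely defined and closed one has $(\beta I-A)^*=\overbar{\beta}I-A^*$, and taking adjoints in $(\beta I-A)^{-1}(\beta I-A)\subset I$ and $(\beta I-A)(\beta I-A)^{-1}=I$ yields $R(\beta,A)^*=(\overbar{\beta}I-A^*)^{-1}$. Hence $\overbar{\beta}I-A^*$ is boundedly invertible, i.e. $\overbar{\beta}\in\rho(A^*)$, and $\overbar{\beta}I-A^*$ maps $D(A^*)$ bijectively onto $X$. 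Completeness of $X_1^d$ is then immediate: by the definition of $\norm{\cdot}_1^d$ the map $\overbar{\beta}I-A^*\colon(X_1^d,\norm{\cdot}_1^d)\to(X,\norm{\cdot})$ is an isometry, and it is onto because $\overbar{\beta}\in\rho(A^*)$, so it is an isometric isomorphism; since $X$ is complete so is $X_1^d$, and as the norm comes from the inner product $\inner{(\overbar{\beta}I-A^*)\cdot,(\overbar{\beta}I-A^*)\cdot}_X$, the space $X_1^d$ is a Hilbert space.

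The substance is the duality statement. The key identity is that for $x\in X$ and $y\in D(A^*)$, writing $w:=(\beta I-A)^{-1}x\in D(A)$ and using \eqref{eq:adjoint_of_a_generator_form},
\begin{equation*}
\inner{x,y}_X=\inner{(\beta I-A)w,y}_X=\inner{w,(\overbar{\beta}I-A^*)y}_X=\inner{(\beta I-A)^{-1}x,(\overbar{\beta}I-A^*)y}_X .
\end{equation*}
By Cauchy--Schwarz this gives $\abs{\inner{x,y}_X}\leq\norm{x}_{-1}\,\norm{y}_1^d$, so the $X$-pairing extends to a bounded sesquilinear form on $X_{-1}\times X_1^d$ and, by density of $X$ in $X_{-1}$, induces a contraction $J\colon X_{-1}\to(X_1^d)'$, $x\mapsto\inner{x,\cdot}_X$. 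To see $J$ is isometric I would, for fixed $x\in X$, substitute $z=(\overbar{\beta}I-A^*)y$ in the quotient defining $\norm{Jx}_{(X_1^d)'}$; as $y$ runs over $D(A^*)$ the vector $z$ runs over all of $X$, whence
\begin{equation*}
\norm{Jx}_{(X_1^d)'}=\sup_{0\neq z\in X}\frac{\abs{\inner{(\beta I-A)^{-1}x,z}_X}}{\norm{z}_X}=\norm{(\beta I-A)^{-1}x}_X=\norm{x}_{-1},
\end{equation*}
so $J$ extends to an isometry on all of $X_{-1}$. For surjectivity, given $\phi\in(X_1^d)'$ the functional $z\mapsto\phi\big((\overbar{\beta}I-A^*)^{-1}z\big)$ is bounded on $X$, hence by the Riesz representation theorem equals $\inner{\cdot,u}_X$ for some $u\in X$; the image of $u$ under the isometric extension of $\beta I-A$ to a map $X\to X_{-1}$ then represents $\phi$, so $J$ is onto. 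Thus $J$ is an isometric isomorphism identifying $X_{-1}$ with $(X_1^d)'=(D(A^*))'$ in the pivot duality.

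I expect the main obstacle to be this last step, specifically handling the abstract completion $X_{-1}$ rigorously: one must perform all computations on the dense subspace $X$, where the pairing is the genuine inner product, and extend by continuity, while keeping track of the conjugate-linearity of the pairing so that the Riesz step produces an element of $X_{-1}$ rather than of $X_1^d$. The surjectivity argument, which reverses the isometry $\overbar{\beta}I-A^*$ and invokes Riesz, is where the pivot identification is actually forced, and care is needed to confirm that the representing functional agrees with $\phi$ on the dense set before concluding by continuity.
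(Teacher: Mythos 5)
Your proof is correct. Note that the paper itself offers no proof of this proposition: it is quoted verbatim from Tucsnak and Weiss (Prop.\ 2.10.2 of \cite{Tucsnak_Weiss}), so the only meaningful comparison is with that reference. Your argument is essentially the standard one found there: the resolvent identity $R(\beta,A)^*=R(\overbar{\beta},A^*)$ giving $\overbar{\beta}\in\rho(A^*)$, the observation that $\overbar{\beta}I-A^*$ is an isometric isomorphism of $(X_1^d,\norm{\cdot}_1^d)$ onto $X$, and the key pairing identity $\inner{x,y}_X=\inner{(\beta I-A)^{-1}x,(\overbar{\beta}I-A^*)y}_X$, from which the substitution $z=(\overbar{\beta}I-A^*)y$ yields $\norm{x}_{-1}=\sup_{y\neq 0}\abs{\inner{x,y}_X}/\norm{y}_1^d$. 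The one genuine difference is structural: in \cite{Tucsnak_Weiss} the dual with respect to a pivot is \emph{defined} as the completion of $X$ under this sup-norm, so once the two norms on $X$ are shown to coincide the identification $X_{-1}=(D(A^*))'$ is immediate and no surjectivity argument is needed; you instead take $(X_1^d)'$ as an honest dual space and close the gap by hand, proving surjectivity of $J$ via Riesz representation and the isometric extension of $\beta I-A$ to a map $X\to X_{-1}$. That costs you the extra step you rightly flag as delicate (checking $J(\widetilde{(\beta I-A)}u)=\phi$ on the dense set, minding the conjugate-linearity convention), but it buys a self-contained proof that does not depend on which definition of pivot duality one adopts, and your handling of these points is sound.
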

Much of our reasoning is justified by the following Proposition,  which we  include here for the reader's convenience (for more details see \cite[Chapter II.5]{Engel_Nagel} or \cite[Chapter 2.10]{Tucsnak_Weiss}).
\begin{prop}\label{prop:Hilbert_rigged_space}
With the notation of Definition~\ref{def:Sobolev_tower} we have the following
\begin{itemize}
 \item[(i)] The spaces $(X_1,\norm{\cdot}_1)$ and $(X_{-1},\norm{\cdot}_{-1})$ are independent of the choice of $\beta\in\rho(A)$.
 \item[(ii)] $(T_1(t))_{t\geq0}$ is a $C_0$-semigroup on the Banach space $(X_1,\norm{\cdot}_1)$ and we have $\norm{T_1(t)}_1=\norm{T(t)}$ for all $t\geq0$.
 \item[(iii)] $(T_{-1}(t))_{t\geq0}$ is a $C_0$-semigroup on the Banach space $(X_{-1},\norm{\cdot}_{-1})$ and  $\norm{T_{-1}(t)}_{-1}=\norm{T(t)}$ for all $t\geq0$.
\end{itemize}
\end{prop}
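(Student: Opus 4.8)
The plan is to reduce all three parts to a single structural fact: by the very definition of the norms in Definition~\ref{def:Sobolev_tower}, the operator $\beta I-A$ is an isometric isomorphism of $(X_1,\norm{\cdot}_1)$ onto $X$, while $R(\beta,A)=(\beta I-A)^{-1}$ is an isometric isomorphism of $(X,\norm{\cdot}_{-1})$ onto its range $D(A)$, which is dense in $X$. Because $(T(t))_{t\geq0}$ commutes with both $\beta I-A$ (on $D(A)$) and with the resolvent $R(\beta,A)$, each of $T_1(t)$ and $T_{-1}(t)$ turns out to be conjugate to $T(t)$ through one of these isometries; since conjugation by an isometric isomorphism preserves the semigroup law, strong continuity, and operator norm, this will give (ii) and (iii) at once.

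For (i), I would fix $\beta,\gamma\in\rho(A)$. For $x\in D(A)$ the resolvent identity gives $(\beta I-A)x=\bigl(I+(\beta-\gamma)R(\gamma,A)\bigr)(\gamma I-A)x$, where the factor $I+(\beta-\gamma)R(\gamma,A)$ is bounded and boundedly invertible (its inverse being $I+(\gamma-\beta)R(\beta,A)$). Taking norms yields two-sided bounds between $\norm{\cdot}_1^{\beta}$ and $\norm{\cdot}_1^{\gamma}$ on $D(A)$, so $(X_1,\norm{\cdot}_1)$ does not depend on $\beta$. Similarly, the resolvent identity in the form $R(\beta,A)=\bigl(I+(\gamma-\beta)R(\beta,A)\bigr)R(\gamma,A)$ gives the analogous equivalence of $\norm{\cdot}_{-1}^{\beta}$ and $\norm{\cdot}_{-1}^{\gamma}$ on $X$; since equivalent norms on the dense subspace $X$ produce the same completion up to a canonical Banach-space isomorphism fixing $X$, the space $(X_{-1},\norm{\cdot}_{-1})$ is likewise independent of $\beta$.

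For (ii), I would use that $T(t)$ leaves $D(A)$ invariant and commutes with $A$ there, so $(\beta I-A)T_1(t)=T(t)(\beta I-A)$ on $X_1$. As $\beta I-A\colon(X_1,\norm{\cdot}_1)\to X$ is an isometric isomorphism, $T_1(t)=(\beta I-A)^{-1}T(t)(\beta I-A)$ inherits the semigroup property and strong continuity from $(T(t))$, and $\norm{T_1(t)}_1=\norm{T(t)}$. For (iii), $T(t)$ commutes with $R(\beta,A)$, so for $x\in X$ we have $\norm{T(t)x}_{-1}=\norm{R(\beta,A)T(t)x}=\norm{T(t)R(\beta,A)x}\leq\norm{T(t)}\,\norm{x}_{-1}$; hence $T(t)$ extends to a bounded operator $T_{-1}(t)$ on the completion $X_{-1}$. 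Extending the isometry $R(\beta,A)$ to an isometric isomorphism $\widetilde{R}\colon X_{-1}\to X$ and passing the commutation relation to the closure gives $\widetilde{R}\,T_{-1}(t)=T(t)\widetilde{R}$, so $T_{-1}(t)$ is again conjugate to $T(t)$, yielding the $C_0$ property and $\norm{T_{-1}(t)}_{-1}=\norm{T(t)}$.

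I expect the only genuinely delicate points to be the two passages to the completion. In (i) one must confirm that equivalence of the $\norm{\cdot}_{-1}$-norms on $X$ really identifies the completions, so that $X_{-1}$ is well defined; in (iii) one must check that the bounded extension $T_{-1}(t)$ actually inherits \emph{strong} continuity. The latter is where density of $X$ in $X_{-1}$, together with the local-in-$t$ uniform bound $\norm{T_{-1}(t)}_{-1}=\norm{T(t)}$ on compact intervals, enters through a standard $\varepsilon/3$ approximation; everything else is bookkeeping with the resolvent identity.
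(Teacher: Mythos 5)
Your proposal is correct: the reduction of all three parts to conjugation of $T(t)$ by the isometric isomorphisms $\beta I-A$ (for $X_1$) and the extended resolvent $\widetilde{R}$ (for $X_{-1}$), together with the resolvent-identity computation for independence of $\beta$, is sound, and the two delicate points you flag (identification of completions under equivalent norms, strong continuity of the extension) are handled adequately. Note that the paper gives no proof of Proposition~\ref{prop:Hilbert_rigged_space} at all, deferring to \cite{Engel_Nagel} and \cite{Tucsnak_Weiss}; your argument is essentially the standard one found in those references, so it matches the intended proof.
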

In the sequel, we denote the restriction (extension) of $T(t)$ described in Definition~\ref{def:Sobolev_tower} by the same symbol $T(t)$, since this is unlikely to lead to confusions.

In the sequel we also use the following result by Miyadera and Voigt \cite[Corollaries III.3.15 and 3.16]{Engel_Nagel}, that gives sufficient conditions for a perturbed generator to remain a generator of a $C_0$-semigroup.
\begin{prop}\label{prop:Miyadera-Voigt_sufficient_condition_of_well-posedness}
Let $(A,D(A))$ be the generator of a strongly continuous semigroup $\big(T(t)\big)_{t\geq0}$ on a Banach space $X$ and let ${P\in\mathcal{L}(X_1,X)}$ be a perturbation which satisfies
\begin{equation}\label{eq:condition_on_perturbation_for_well-posedness}
\int_{0}^{t_0}\norm{PT(r)x}dr\leq q\Vert x\Vert\qquad\forall x\in D(A)
\end{equation}
for some $t_0>0$ and $0\leq q<1$. Then the sum $A+P$ with domain $D(A+P):=D(A)$ generates a strongly continuous semigroup $(S(t))_{t\geq0}$ on $X$. Moreover, for all $t\geq0$ the $C_0$-semigroup $(S(t))_{t\geq0}$ satisfies
\begin{equation}\label{eq:perturbed_semigroup_in_integral_form}
 S(t)x=T(t)x+\int_{0}^{t}S(s)PT(t-s)x\,ds \qquad\forall x\in D(A).
\end{equation}
\end{prop}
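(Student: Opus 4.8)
Since this is the classical Miyadera--Voigt perturbation theorem, the plan is to construct $(S(t))_{t\geq0}$ explicitly as a Dyson--Phillips series, prove its convergence directly from \eqref{eq:condition_on_perturbation_for_well-posedness}, and then identify its generator as $A+P$ through a Laplace transform computation. First I would upgrade the hypothesis from $[0,t_0]$ to all of $[0,\infty)$: writing $\|T(t)\|\leq Me^{\omega t}$ and using $T(kt_0+r)=T(r)T(kt_0)$ together with $T(kt_0)x\in D(A)$, the bound \eqref{eq:condition_on_perturbation_for_well-posedness} applied to $T(kt_0)x$ yields
\[
\int_{kt_0}^{(k+1)t_0}\|PT(r)x\|\,dr\leq q\,\|T(kt_0)x\|\leq qMe^{\omega kt_0}\|x\|.
\]
Hence $r\mapsto PT(r)x$ belongs to $L^1_{loc}(0,\infty;X)$ for each $x\in D(A)$, with explicit exponential control; this is what renders all subsequent integrals meaningful.

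Next I would set $S_0(t):=T(t)$ and define inductively
\[
S_{n+1}(t)x:=\int_0^t S_n(s)\,PT(t-s)x\,ds\qquad(x\in D(A)),
\]
which is legitimate since $T(t-s)x\in D(A)$ gives $PT(t-s)x\in X$ while $S_n(s)\in\mathcal{L}(X)$ by induction. Writing $M_0:=Me^{|\omega|t_0}$ and $\beta_n:=\sup_{0\leq t\leq t_0}\|S_n(t)\|$, the key estimate follows by induction on $[0,t_0]$: from $\|S_0(t)\|\leq M_0$ and \eqref{eq:condition_on_perturbation_for_well-posedness},
\[
\|S_{n+1}(t)x\|\leq\beta_n\int_0^t\|PT(t-s)x\|\,ds=\beta_n\int_0^t\|PT(r)x\|\,dr\leq q\,\beta_n\,\|x\|,
\]
so that $\beta_{n+1}\leq q\beta_n$ and therefore $\beta_n\leq M_0q^n$. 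As $q<1$, the series $S(t):=\sum_{n\geq0}S_n(t)$ converges in $\mathcal{L}(X)$ uniformly on $[0,t_0]$ (and, by the exponential bounds of the first step, on every compact interval), with $\|S(t)\|\leq M_0/(1-q)$. Each $S_n$ is strongly continuous, $S_0(0)=I$ and $S_n(0)=0$ for $n\geq1$, so $S$ is strongly continuous with $S(0)=I$; summing the defining recursion over $n$ recovers the variation-of-parameters identity \eqref{eq:perturbed_semigroup_in_integral_form}, and the semigroup law $S(t+r)=S(t)S(r)$ follows from the convolution structure (equivalently, from uniqueness of solutions of that identity).

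Finally I would identify the generator by Laplace transform. The convolution defining $S_{n+1}$ transforms into a product; since $t\mapsto T(t)x$ is continuous into $D(A)$ for $x\in D(A)$ and $P\in\mathcal{L}(X_1,X)$ may be pulled through the Bochner integral, one gets $\widehat{PT(\cdot)}(s)=PR(s,A)$ and hence $\widehat{S_n}(s)=R(s,A)\big(PR(s,A)\big)^n$. The estimate of the first step gives, for $\re s=\sigma>\omega$,
\[
\|PR(s,A)x\|\leq\int_0^\infty e^{-\sigma t}\|PT(t)x\|\,dt\leq q\|x\|+qM\|x\|\,\frac{e^{(\omega-\sigma)t_0}}{1-e^{(\omega-\sigma)t_0}},
\]
where the first interval is bounded using $e^{-\sigma t}\leq1$ and the tail is summed geometrically. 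Thus $\|PR(s,A)\|<1$ for $\sigma$ large, the Neumann series converges, and
\[
\widehat{S}(s)=R(s,A)\big(I-PR(s,A)\big)^{-1}=\big[(I-PR(s,A))(sI-A)\big]^{-1}=R(s,A+P),
\]
using $PR(s,A)(sI-A)=P$ on $D(A)$. By uniqueness of Laplace transforms, $(S(t))_{t\geq0}$ is precisely the $C_0$-semigroup generated by $(A+P,D(A))$.

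I expect the main obstacle to be extracting the operator-norm bound $\|PR(s,A)\|<1$ from the merely integral hypothesis \eqref{eq:condition_on_perturbation_for_well-posedness} — the decisive trick being to bound the first window by $e^{-\sigma t}\leq1$ rather than trying to make it vanish — together with the attendant bookkeeping: ensuring the $S_n(t)$ genuinely take values in $X$ and extend to $\mathcal{L}(X)$, and justifying that the unbounded $P$ commutes with the integrals defining both the series and $\widehat{PT(\cdot)}(s)$. None of these rely on boundedness of $P$ itself, only on the $L^1$-type control it inherits along orbits of the semigroup.
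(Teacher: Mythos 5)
The paper does not prove this proposition at all: it is imported verbatim from Engel--Nagel (Corollaries III.3.15 and III.3.16), so there is no in-paper argument to compare against. Your proposal reconstructs essentially the textbook proof that the citation points to -- the Dyson--Phillips series built from the Miyadera condition, with generator identification via Laplace transforms -- and its core computations are sound: the window-shifting bound $\int_{kt_0}^{(k+1)t_0}\norm{PT(r)x}\,dr\leq qMe^{\omega kt_0}\norm{x}$, the induction $\beta_{n+1}\leq q\beta_n$ on $[0,t_0]$, the estimate forcing $\norm{PR(s,A)}<1$ for $\re s$ large (your ``bound the first window by $e^{-\sigma t}\leq 1$'' trick is exactly right), and the algebra $(I-PR(s,A))(sI-A)=sI-A-P$ on $D(A)$ giving $\widehat{S}(s)=R(s,A+P)$.

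Two steps, however, are asserted rather than proved, and one of them hides the only genuinely hard point of the theorem. First, global convergence: your induction yields $\sup_{t\leq T}\norm{S_n(t)}\leq M_T C_T^{\,n}$ where $C_T$ is the integral bound over $[0,T]$, and for $T>t_0$ one may have $C_T>1$, so ``converges on every compact interval by the exponential bounds'' does not follow from what you wrote. The standard patches are either to prove the refined simplex estimate (splitting $[0,T]$ into $m$ windows of length $t_0$ gives $\norm{S_n(t)}\leq M_T'\binom{n+m-1}{m-1}q^n$, still summable), or to work only on $[0,t_0]$ and extend by the semigroup law -- which brings us to the second and more serious gap. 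The semigroup identity $S(t+r)=S(t)S(r)$ is dismissed as following ``from the convolution structure,'' but it is the nontrivial step: it requires the Cauchy-product identity $S_n(t+r)=\sum_{j+k=n}S_j(t)S_k(r)$, proved by a Fubini argument on the integration simplex. Without it, your closing sentence is circular: uniqueness of Laplace transforms identifies $S$ with the semigroup generated by $A+P$ only once you know such a semigroup exists, which is part of the claim. Your route can nonetheless be closed without ever proving the semigroup law directly: from $\widehat{S}(s)=R(s,A+P)$ and $\norm{S(t)}\leq Me^{\omega t}$, the derivative formula for Laplace transforms gives $R(s,A+P)^n$ as the transform of $t^{n-1}S(t)/(n-1)!$, hence $\norm{R(s,A+P)^n}\leq M(\sigma-\omega)^{-n}$; since $A+P$ is closed (it equals $(I-PR(s,A))(sI-A)$ shifted, a boundedly invertible operator composed with a closed bijection) and densely defined, Hille--Yosida produces the generated semigroup, and only then does uniqueness of Laplace transforms legitimately conclude $S$ is it. Either patch is standard, but one of them must be supplied.
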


\subsection{The admissibility problem}\label{subsec:def_of_the_admissibility_problem}
The basic object in the formulation of admissibility problem is a linear system and its mild solution
\begin{equation}\label{eqn basic object}
 \dot{x}(t)=Ax(t)+Bu(t);\quad x(t)=T(t)x_0+\int_{0}^{t}T(t-s)Bu(s) \, ds,
\end{equation}
where $x:[0,\infty)\rightarrow X$, $u\in V$, where $V$ is a normed space of measurable functions from $[0,\infty)$ to $U$ and $B$ is a \textit{control operator}; $x_0\in X$ is an initial state.

In many practical examples the control operator $B$ is unbounded, hence \eqref{eqn basic object} is viewed on an extrapolation space $X_{-1}\supset X$ where $B\in\mathcal{L}(U,X_{-1})$. Introduction of $X_{-1}$, however, comes at a price of physical interpretation of the solution. To be more precise, a dynamical system expressed by \eqref{eqn basic object} describes a physical system where one can assign a physical meaning to $X$, with the use of which the modelling is performed. That is not always true for $X_{-1}$. We would then like to study those control operators $B$ for which the (mild) solution is a continuous $X$-valued function that carries a physical meaning. In a rigorous way, to ensure that the state $x(t)$ lies in $X$ it is sufficient that $\int_{0}^{t}T_{-1}(t-s)Bu(s) \, ds\in X$ for all inputs $u\in V$. 
\begin{defn}\label{def:forcing_operator}
Let $B\in\calL(U,X_{-1})$ and $t\geq0$. The \textit{forcing operator} $\Phi_{t}\in\calL(V,X_{-1})$ is given by \index{Greek@\textbf{Greek symbols}!u@$\Phi_{t}(\cdot)$}
\begin{equation}\label{eq:forcing_operator}
\Phi_{t}(u):=\int_{0}^{t}T(t-\sigma)Bu(\sigma)\,d\sigma.
\end{equation}
\end{defn}

Put differently, we have 
\begin{defn}\label{def:infinite_time_admissibility_of_B}
 The control operator $B\in\mathcal{L}(U,X_{-1})$ is called
 \begin{enumerate}[(i)]
 \item \textit{finite-time admissible} for $\big(T(t)\big)_{t\geq0}$ on a Hilbert space $X$ if for each $t>0$ there is a constant $K_t$ such that 
\begin{equation}\label{eq:finite-time_admissibility_by_norm_inequality_def}
\norm{\Phi_{t}(u)}_X\leq K_t\norm{u}_{V}\quad\forall u\in V;
\end{equation}
 \item  \textit{infinite-time admissible} for $(T(t))_{t\geq0}$ if there is a constant $K\geq0$ such that
\begin{equation}\label{eq:infinite_time_admissibility_def}
\norm{\Phi_t}_{\calL(V,X)}\leq K\qquad \forall t\geq0.
\end{equation}
\end{enumerate} 
\end{defn}
For the infinite-time admissibility it is convenient to define a different version of the forcing operator, namely $\Phi_\infty:L^2(0,\infty;U)\to X_{-1}$,
\begin{equation}\label{eq:forcing_operator_infty_def}
\Phi_{\infty}(u):=\int_{0}^{\infty}T(t)Bu(t) \, dt.
\end{equation}
The infinite-time admissibility of $B$ follows then from the boundedness of $\Phi_\infty$ in \eqref{eq:forcing_operator_infty_def} taken as an operator from $L^2(0,\infty;U)$ to $X$. For a more detailed discussion concerning infinite-time admissibility see also \cite{Jacob_Partington_2004} and \cite{Tucsnak_Weiss} with references therein.

\section{The setting of retarded diagonal systems}\label{sec:3}
We begin with a general setting of the previous section expressed by \eqref{eq:non-autonomous_abstract_Cauchy_problem_def} with elements defined there. Then, consecutively specifying these elements, we reach a description of a concrete case of a retarded diagonal system. 

Let the delay operator $\Psi$ be a point evaluation i.e. define $\Psi\in\mathcal{L}(W^{1,2}(-\tau,0;X),X)$ as
\begin{equation}\label{eq:delay_operator_def}
\Psi(f):=A_{1}f(-\tau),
\end{equation}
where boundedness of $\Psi$ results from continuous embedding of $W^{1,2}(-\tau,0;X)$  in $C([-\tau,0],X)$ (see e.g. \cite[Theorem 8.8]{Brezis}, \cite[Theorem III.4.10.2]{Amann} or \cite[Chapter 5.9.2]{Evans}).

With the delay operator given by \eqref{eq:delay_operator_def} we are in a position to describe pivot duality for $\mathcal{X}$ given by \eqref{eq:extended_state_space_def} with $(\calA,D(\calA))$ given by \eqref{eq:abstract_A_def}-\eqref{eq:abstract_A_domain_def} and with $\mathcal{B}=\binom{B}{0}$. Then, using the pivot duality, we consider \eqref{eq:non-autonomous_abstract_Cauchy_problem_def} on the completion space $\mathcal{X}_{-1}$ where the control operator $\mathcal{B}\in\mathcal{L}(U,\mathcal{X}_{-1})$. To write  explicitly all the elements of the pivot duality setting we need to determine the adjoint $(\calA^*,D(\calA^*))$ operator (see Proposition~\ref{prop:Sobolev_tower_adjoint}).

\begin{prop}\label{prop:adjoint_of_abstract_A}
Let $X$, $(A,D(A))$ and $A_1$ be as in \eqref{eq:retarded_non-autonomous_system} and $(\mathcal{A},D(\mathcal{A}))$ be defined by \eqref{eq:abstract_A_def}-- \eqref{eq:abstract_A_domain_def} with $\Psi$ given by \eqref{eq:delay_operator_def}. Then $(\calA^*,D(\calA^*))$, the adjoint of $(\calA,D(\calA))$, is given by

\begin{equation}\label{eq:adjoint_of_abstract_A_domain}
D(\mathcal{A}^*)=\bigg\{\binom{y}{g}\in D(A^*)\times W^{1,2}(-\tau,0;X):A_{1}^*y=g(-\tau)\bigg\},
\end{equation}
\begin{equation}\label{eq:adjoint_of_abstract_A_def}
\mathcal{A}^*\binom{y}{g}=\binom{A^*y+g(0)}{-\frac{d}{d\sigma}g},
\end{equation}
where $(A^*,D(A^*))$ is the adjoint of $(A,D(A))$ and $A_1^*$ is the adjoint of $A_1$.
\end{prop}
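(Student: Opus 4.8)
The plan is to determine $(\calA^*,D(\calA^*))$ directly from Definition~\ref{def:adjoint_of_a_generator}: for a fixed $\binom{y}{g}\in\calX$ I ask precisely when the functional $\binom{x}{f}\mapsto\inner{\calA\binom{x}{f},\binom{y}{g}}_{\calX}$ is bounded on $D(\calA)$, and then read off the representing vector. Expanding the pairing gives
\begin{equation*}
\inner{\calA\binom{x}{f},\binom{y}{g}}_{\calX}=\inner{Ax,y}_X+\inner{A_1f(-\tau),y}_X+\int_{-\tau}^{0}\inner{f'(\sigma),g(\sigma)}_X\,d\sigma .
\end{equation*}
Everything hinges on integrating the last term by parts: this produces boundary contributions at $0$ and at $-\tau$ together with the term $-\inner{f,g'}_{L^2}$, and the adjoint will be exactly the data for which every unbounded piece of the resulting expression cancels.

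The subtlety is that this integration by parts is legitimate only once $g\in W^{1,2}(-\tau,0;X)$, which is itself part of the conclusion, so I would first establish that regularity by a restricted test. Taking $x=0$ and $f\in W_0^{1,2}(-\tau,0;X)$ (so that $\binom{0}{f}\in D(\calA)$ and $f(-\tau)=0$) collapses the pairing to $\inner{f',g}_{L^2}$; if $\binom{y}{g}\in D(\calA^*)$ with $\calA^*\binom{y}{g}=\binom{w_1}{w_2}$, this must equal $\inner{f,w_2}_{L^2}$ for all such $f$. By the definition of the weak derivative this forces $g\in W^{1,2}(-\tau,0;X)$ with $\frac{d}{d\sigma}g=-w_2$, giving the second component $-\frac{d}{d\sigma}g$ of \eqref{eq:adjoint_of_abstract_A_def}. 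With $g\in W^{1,2}$ in hand the traces $g(0),g(-\tau)$ make sense via $W^{1,2}\hookrightarrow C$, so I may integrate by parts for a general $\binom{x}{f}\in D(\calA)$, using $f(0)=x$ and $\inner{A_1f(-\tau),y}=\inner{f(-\tau),A_1^*y}$ (recall $A_1$ is bounded, so $A_1^*$ is everywhere defined).

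After cancelling the $-\inner{f,g'}$ terms, the identity $\inner{\calA\binom{x}{f},\binom{y}{g}}=\inner{x,w_1}+\inner{f,w_2}$ reduces to
\begin{equation*}
\inner{Ax,y}_X+\inner{x,g(0)}_X+\inner{f(-\tau),A_1^*y-g(-\tau)}_X=\inner{x,w_1}_X .
\end{equation*}
Here I would exploit that, for fixed $x\in D(A)$, the value $f(-\tau)$ can be prescribed arbitrarily in $X$ (e.g.\ by an interpolant with $f(0)=x$) while leaving every other term unchanged; since point evaluation is unbounded on $L^2$, boundedness of the functional forces the bracket to vanish, i.e.\ $g(-\tau)=A_1^*y$, which is the domain constraint in \eqref{eq:adjoint_of_abstract_A_domain}. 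The surviving identity $\inner{Ax,y}=\inner{x,w_1-g(0)}$ for all $x\in D(A)$ then yields $y\in D(A^*)$ and $w_1=A^*y+g(0)$, completing the first component of \eqref{eq:adjoint_of_abstract_A_def}. The converse inclusion follows by running the same integration by parts forward: for $\binom{y}{g}$ in the claimed set the pairing equals $\inner{\binom{x}{f},\binom{A^*y+g(0)}{-\frac{d}{d\sigma}g}}_{\calX}$, which is manifestly bounded. I expect the main obstacle to be precisely this regularity bootstrapping — one cannot integrate by parts before knowing $g\in W^{1,2}$ — together with the careful use of the unboundedness of the trace maps to separate the two conditions $y\in D(A^*)$ and $g(-\tau)=A_1^*y$ out of the single boundedness hypothesis.
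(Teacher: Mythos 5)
Your proof is correct, and it handles the one genuinely delicate point --- that one may not integrate by parts before knowing $g\in W^{1,2}(-\tau,0;X)$ --- by a route different from the paper's. You bootstrap regularity first: testing against $\binom{0}{f}$ with $f\in W_{0}^{1,2}(-\tau,0;X)$ collapses the adjoint identity to $\inner{f',g}_{L^2}=\inner{f,w_2}_{L^2}$, which identifies $-w_2$ as the weak derivative of $g$; only then do you integrate by parts and decouple the two boundary conditions by varying $f(0)=x$ and $f(-\tau)$ independently. The paper (adapting Kappel) never integrates by parts in the forward direction: it substitutes $f(\sigma)=f(0)-\int_{\sigma}^{0}f'(\xi)\,d\xi$ into the pairing, swaps the order of integration, and exploits first constant $f$ and then arbitrary $f'\in L^2(-\tau,0;X)$ to arrive at the explicit antiderivative formula $g(\sigma)=A_1^*y-\int_{-\tau}^{\sigma}(\calA^*w)^1(\xi)\,d\xi$, from which $g\in W^{1,2}$, the trace condition $g(-\tau)=A_1^*y$, and both components of $\calA^*$ drop out simultaneously. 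Your two-step version is the more standard distributional argument and isolates the regularity claim cleanly; the paper's buys the formula for $g$ (and hence everything else) in one stroke. Two minor remarks: your appeal to ``point evaluation is unbounded on $L^2$'' is unnecessary (and slightly off-target) at that stage, since you already hold the exact identity $\inner{\calA v,w}_{\calX}=\inner{v,\calA^*w}_{\calX}$, so the interpolant argument you sketch --- prescribing $f(-\tau)$ arbitrarily with $x$ fixed --- already forces $A_1^*y-g(-\tau)=0$; unboundedness of the trace would only be needed if one argued from boundedness alone. Also, the paper obtains $y\in D(A^*)$ at the outset from boundedness of the functional (constant $f$ plus boundedness of $A_1$), whereas you extract it at the end from the surviving identity $\inner{Ax,y}=\inner{x,w_1-g(0)}$; both are valid. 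The converse inclusion is the same direct integration-by-parts verification in both proofs.
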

\begin{proof} Let $F$ be the set defined as the right hand side of \eqref{eq:adjoint_of_abstract_A_domain}. 
To show that $D(\calA^*)\subset F$ we adapt the approach from \cite{Kappel_1986}. Let $v=\binom{f(0)}{f}\in D(\calA)$, $w=\binom{y}{g}\in D(\calA^*)$ and let
\[
\calA^*w=\binom{(\calA^*w)^0}{(\calA^*w)^1}.
\]
By \eqref{eq:abstract_A_def}, \eqref{eq:abstract_A_domain_def} and the adjoint Definition \ref{def:adjoint_of_a_generator} we get
\begin{align}\label{eq:adjoint_of_abstract_A_aux_1}
\begin{split}
\inner{\calA^* w,v}_\calX&=\big\langle (\calA^*w)^0, f(0)\big\rangle_X+\int_{-\tau}^{0}\big\langle (\calA^*w)^1(\sigma),f(\sigma)\big\rangle_X\,d\sigma\\
 &=\big\langle y,A f(0)\big\rangle_X+\big\langle y,A_1 f(-\tau)\big\rangle_X+\int_{-\tau}^{0}\big\langle g(\sigma),\frac{d}{d\sigma}f\big\rangle_X\,d\sigma,
 \end{split}
\end{align}
and boundedness of the above for every $v\in D(\calA)$ implies that $y\in D(A^*)$. Observe also that 
\begin{align}\label{eq:adjoint_of_abstract_A_aux_2}
\begin{split} 
&\int_{-\tau}^{0}\big\langle (\calA^*w)^1(\sigma),f(\sigma)\big\rangle_X\,d\sigma=
\int_{-\tau}^{0}\Big\langle (\calA^*w)^1(\sigma),f(0)-\int_{\sigma}^{0}\frac{d}{d\xi}f(\xi)\,d\xi \Big\rangle_X\,d\sigma\\\
&=\int_{-\tau}^{0}\big\langle (\calA^*w)^1(\sigma),f(0)\big\rangle_X\,d\sigma-\int_{-\tau}^{0}\Big\langle (\calA^*w)^1(\sigma),\int_{\sigma}^{0}\frac{d}{d\xi}f(\xi)\,d\xi \Big\rangle_X\,d\sigma\\
&=\int_{-\tau}^{0}\big\langle (\calA^*w)^1(\sigma),f(0)\big\rangle_X\,d\sigma-\int_{-\tau}^{0}\int_{-\tau}^{\xi}\Big\langle (\calA^*w)^1(\sigma),\frac{d}{d\xi}f(\xi)\Big\rangle_X\,d\sigma\,d\xi\\
&=\Big\langle \int_{-\tau}^{0}(\calA^*w)^1(\sigma)\,d\sigma,f(0)\Big\rangle_X-\int_{-\tau}^{0}\Big\langle \int_{-\tau}^{\xi}(\calA^*w)^1(\sigma)\,d\sigma,\frac{d}{d\xi}f(\xi)\Big\rangle_X\,d\xi.
\end{split}
\end{align}
Putting the result of \eqref{eq:adjoint_of_abstract_A_aux_2} into \eqref{eq:adjoint_of_abstract_A_aux_1} and rearranging gives that for every $v\in D(\calA)$
\begin{align}
\begin{split}
&\big\langle (\calA^*w)^0+ \int_{-\tau}^{0}(\calA^*w)^1(\sigma)\,d\sigma-A^*y-A_1^*y ,f(0)\big\rangle_X\\
&=\int_{-\tau}^{0}\Big\langle \int_{-\tau}^{\sigma}(\calA^*w)^1(\xi)\,d\xi-A_1^*y+g(\sigma),\frac{d}{d\sigma}f(\sigma)\Big\rangle_X\,d\sigma,
\end{split}
\end{align}
where we used the fact that $f(-\tau)=f(0)-\int_{-\tau}^{0}\frac{d}{d\sigma}f(\sigma)\,d\sigma$. As for every constant $f:[-\tau,0]\to D(A)$ we have $\binom{f(0)}{f}\in D(\calA)$, there is 
\begin{equation}\label{eq:adjoint_of_abstract_A_aux_3}
(\calA^*w)^0=A^*y+A_1^*y-\int_{-\tau}^{0}(\calA^*w)^1(\sigma)\,d\sigma,
\end{equation} 
and then
\begin{equation}\label{eq:adjoint_of_abstract_A_aux_4}
g(\sigma)=A_1^*y-\int_{-\tau}^{\sigma}(\calA^*w)^1(\xi)\,d\xi, \quad \forall \sigma\in[-\tau,0].
\end{equation}
Equation \eqref{eq:adjoint_of_abstract_A_aux_4} shows that $w=\binom{y}{g}\in D(\calA^*)$ implies $g\in W^{1,2}(-\tau,0;X)$. Taking the limits gives
\begin{equation}\label{eq:djoint_of_abstract_A_aux_5}
g(-\tau)=A^*_{1}y
\end{equation} 
and
\begin{equation}\label{eq:djoint_of_abstract_A_aux_6}
(\calA^*w)^0=A^*y+g(0).
\end{equation}
Differentiating \eqref{eq:adjoint_of_abstract_A_aux_4} with respect to $\sigma$ we also have
\begin{equation}\label{eq:djoint_of_abstract_A_aux_7}
(\calA^*w)^1=-\frac{d}{d\sigma}g.
\end{equation}
To show that $D(\calA^*)\supset F$ let $w=\binom{y}{g}\in F$ and $v=\binom{x}{f}=\binom{f(0)}{f}\in D(\calA)$. By \eqref{eq:adjoint_of_a_generator_form} we need to show that $\inner{\calA^* w,v}_\calX=\inner{w,\calA v}_\calX$, where $\calA^*w$ we take as given by \eqref{eq:adjoint_of_abstract_A_def}. We have 
\begin{align*}
\inner{\calA^* w,v}_\calX&=\bigg\langle\binom{A^*y+g(0)}{-\frac{d}{d\sigma}g},\binom{f(0)}{f}\bigg\rangle_\calX
=\big\langle A^*y+g(0),f(0)\big\rangle_X+\int_{-\tau}^{0}\big\langle -\frac{d}{d\sigma}g,f\big\rangle_X\,d\sigma\\
&=\big\langle A^*y,f(0)\big\rangle_X+\big\langle g(0),f(0)\big\rangle_X+\big\langle -g,f\big\rangle_X\bigg|_{-\tau}^{0}-\int_{-\tau}^{0}\big\langle -g,\frac{d}{d\sigma}f\big\rangle_X\,d\sigma\\
&=\big\langle A^*y,f(0)\big\rangle_X+\big\langle g(-\tau),f(-\tau)\big\rangle_X+\big\langle g,\frac{d}{d\sigma}f\big\rangle_{L^2}\\
&=\big\langle A^*y,f(0)\big\rangle_X+\big\langle A_{1}^*y,f(-\tau)\big\rangle_X+\big\langle g,\frac{d}{d\sigma}f\big\rangle_{L^2}\\
&=\big\langle y,A f(0)\big\rangle_X+\big\langle y,A_1 f(-\tau)\big\rangle_X+\big\langle g,\frac{d}{d\sigma}f\big\rangle_{L^2}=\inner{w,\calA v}_\calX.
\end{align*}
\end{proof}
Denoting $D(\calA^*)'$ for the dual to $D(\calA^*)$ with respect to the pivot space $\calX$, by Proposition~\ref{prop:Sobolev_tower_adjoint} we have
\begin{equation}\label{eq:extended_space_X_-1}
 \mathcal{X}_{-1}=D(\calA^*)'.
\end{equation}

System \eqref{eq:non-autonomous_abstract_Cauchy_problem_def} represents an abstract Cauchy problem, which is well-posed if and only if $(\calA,D(\calA))$ generates a $C_0$-semigroup on $\calX$. To show that this is the case we use a perturbation approach. We represent  $\mathcal{A}=\mathcal{A}_0+\mathcal{A}_{\Psi}$, where 
\begin{equation}\label{eq:A_0_def}
\mathcal{A}_0:=\left(\begin{array}{cc} 
								A & 0 \\ 
								0 & \frac{d}{d\sigma}	
						\end{array}\right),
\end{equation}
with domain $D(\mathcal{A}_0)=D(\mathcal{A})$ and
\begin{equation}\label{eq:A_psi_def}
\mathcal{A}_{\Psi}:=\left(\begin{array}{cc} 
								0 & \Psi \\ 
								0 & 0	
						\end{array}\right),
\end{equation}
where $\mathcal{A}_{\Psi}\in\mathcal{L}\big(X\times W^{1,2}(-\tau,0;X),\mathcal{X}\big)$.  
The following proposition \cite[Theorem 3.25]{Batkai_Piazzera} gives a necessary and sufficient condition for the unperturbed part $(\mathcal{A}_0,D(\mathcal{A}_0))$ to generate a $C_0$-semigroup on $\calX$.
\begin{prop}\label{prop:abstract_A_0_as_generator}
Let $X$ be a Banach space. The following are equivalent:
	\begin{itemize}
		\item[(i)] The operator $(A,D(A))$ generates a strongly continuous semigroup\\ $\big(T(t)\big)_{t\geq0}$ on $X$.
		\item[(ii)] The operator $(\mathcal{A}_0,D(\mathcal{A}_0))$ generates a strongly continuous semigroup\\ $\big(\mathcal{T}_0(t)\big)_{t\geq0}$ on $X\times L^p(-\tau,0;X)$ for all $1\leq p<\infty$.
		\item[(iii)] The operator $(\mathcal{A}_0,D(\mathcal{A}_0))$ generates a strongly continuous semigroup\\ $\big(\mathcal{T}_0(t)\big)_{t\geq0}$ on $X\times L^p(-\tau,0;X)$ for one $1\leq p<\infty$.
	\end{itemize}
	The $C_0$-semigroup $\big(\mathcal{T}_0(t)\big)_{t\geq0}$ is given by 
	\begin{equation}\label{eq:T_0_semigroup_def}
		\mathcal{T}_{0}(t):=\left(\begin{array}{cc} 
									T(t) & 0 \\ 
									S_{t} & S_{0}(t)	
									\end{array}\right)\qquad \forall t\geq0,
	\end{equation}
	where $\big(S_{0}(t)\big)_{t\geq0}$ is the nilpotent left shift $C_0$-semigroup on $L^p(-\tau,0;X)$, 
	\begin{equation}
	S_{0}(t)f(s):=\left\{\begin{array}{ll}
        								f(s+t)	&	if\ s+t\in[-\tau,0],\\
        								0		&	else\\
       									\end{array}\right.
	\end{equation}
	and $S_{t}:X\rightarrow L^p(-\tau,0;X)$, 
	\begin{equation}
	(S_{t}x)(s):=\left\{\begin{array}{ll}
        								T(s+t)x	&	if\ -t<s\leq0,\\
        								0					&	if -\tau\leq s\leq-t.\\
       									\end{array}\right.
	\end{equation}
\end{prop}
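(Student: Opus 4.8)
The plan is to prove the cycle of implications $(i)\Rightarrow(ii)\Rightarrow(iii)\Rightarrow(i)$, with $(ii)\Rightarrow(iii)$ immediate, and to establish formula \eqref{eq:T_0_semigroup_def} in the course of proving $(i)\Rightarrow(ii)$. The conceptual point to keep in mind is that $(\mathcal{A}_0,D(\mathcal{A}_0))$ is \emph{not} the direct sum of $A$ and $\frac{d}{d\sigma}$: the coupling condition $f(0)=x$ built into $D(\mathcal{A}_0)=D(\mathcal{A})$ (see \eqref{eq:abstract_A_domain_def}) means the second component evolves by differentiation on functions whose value at $0$ is prescribed by the first component, which is exactly what the off-diagonal block $S_t$ in \eqref{eq:T_0_semigroup_def} is designed to feed in.

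For $(i)\Rightarrow(ii)$, assuming $A$ generates $(T(t))_{t\geq0}$, I would first \emph{postulate} the family $(\mathcal{T}_0(t))_{t\geq0}$ by \eqref{eq:T_0_semigroup_def} and verify directly that it is a $C_0$-semigroup on $X\times L^p(-\tau,0;X)$ for each fixed $p$. Multiplying the $2\times2$ block matrices, the diagonal identities $T(t)T(s)=T(t+s)$ and $S_0(t)S_0(s)=S_0(t+s)$ are the semigroup laws of $T$ and of the nilpotent left shift; the only real content is the off-diagonal identity $S_tT(s)+S_0(t)S_s=S_{t+s}$, which I would check by a short case analysis on $\sigma\in[-\tau,0]$ according to whether $\sigma>-t$, $-t-s<\sigma\le-t$, or $\sigma\le-t-s$, using $T(\sigma+t)T(s)=T(\sigma+t+s)$. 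Strong continuity splits as $T(t)x\to x$ in $X$, $S_0(t)f\to f$ in $L^p$ (continuity of the shift semigroup), and $\|S_tx\|_{L^p}^p=\int_0^t\|T(r)x\|^p\,dr\to0$ for the cross term, while $\mathcal{T}_0(0)=I$ is clear.

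The heart of $(i)\Rightarrow(ii)$ is identifying the generator $\mathcal{G}$ of $(\mathcal{T}_0(t))_{t\geq0}$ with $(\mathcal{A}_0,D(\mathcal{A}_0))$. I would first show $\mathcal{A}_0\subseteq\mathcal{G}$: for $\binom{x}{f}\in D(\mathcal{A}_0)$ the top difference quotient tends to $Ax$ since $x\in D(A)$, while for the bottom one I would split $[-\tau,0]$ into $[-\tau,-t]$ and $(-t,0]$. On $[-\tau,-t]$ only $S_0(t)$ contributes and $\frac{1}{t}(f(\sigma+t)-f(\sigma))\to f'(\sigma)$ in $L^p$ because $f\in W^{1,2}$; on the shrinking interval $(-t,0]$ only $S_t$ contributes, and writing $T(\sigma+t)x-f(\sigma)=(T(\sigma+t)x-x)+(f(0)-f(\sigma))$ one sees the integrand is bounded while the interval has length $t$, so its $L^p$-contribution vanishes. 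This gives $\mathcal{G}\binom{x}{f}=\binom{Ax}{f'}=\mathcal{A}_0\binom{x}{f}$. To upgrade the inclusion to equality I would fix $\lambda$ with large real part (so $\lambda\in\rho(A)\cap\rho(\mathcal{G})$) and show $\lambda-\mathcal{A}_0$ is onto $\mathcal{X}$ by solving $(\lambda-\mathcal{A}_0)\binom{x}{f}=\binom{a}{b}$ explicitly, namely $x=R(\lambda,A)a$ and $f(\sigma)=e^{\lambda\sigma}x+\int_\sigma^0 e^{\lambda(\sigma-r)}b(r)\,dr$, which lies in $W^{1,2}$ and satisfies $f(0)=x$, hence $\binom{x}{f}\in D(\mathcal{A}_0)$. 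The standard resolvent argument (if $\mathcal{A}_0\subseteq\mathcal{G}$ and $\lambda-\mathcal{A}_0$ is surjective for some $\lambda\in\rho(\mathcal{G})$, then $\mathcal{A}_0=\mathcal{G}$) closes this direction, and since the construction holds for arbitrary $p$, it yields $(ii)$.

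Finally, for $(iii)\Rightarrow(i)$ I would exploit the upper-triangular structure of the resolvent. Solving $(\lambda-\mathcal{A}_0)\binom{x}{f}=\binom{a}{b}$ shows the first component $x=R(\lambda,A)a$ is independent of $b$; iterating, the first component of $R(\lambda,\mathcal{A}_0)^n\binom{a}{0}$ equals $R(\lambda,A)^n a$. Since $\mathcal{A}_0$ generates, the Hille--Yosida generation theorem gives $\|R(\lambda,\mathcal{A}_0)^n\|\le M(\lambda-\omega)^{-n}$, and projecting onto the first coordinate yields $\|R(\lambda,A)^n a\|_X\le\|R(\lambda,\mathcal{A}_0)^n\binom{a}{0}\|_{\mathcal{X}}\le M(\lambda-\omega)^{-n}\|a\|_X$; together with density of $D(A)$ (inherited from density of $D(\mathcal{A}_0)$ in $\mathcal{X}$) this is exactly the Hille--Yosida condition for $A$ to generate $(T(t))_{t\geq0}$. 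I expect the main obstacle to be the $L^p$-convergence of the second difference quotient near $\sigma=0$ in the generator computation: this is the one place where the coupling $f(0)=x$ forces the two blocks $S_t$ and $S_0(t)$ to cooperate, and a purely pointwise estimate is insufficient — one must control the integrand uniformly on the shrinking boundary layer $(-t,0]$.
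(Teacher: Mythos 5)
The paper itself offers no proof of this proposition: it is quoted verbatim from B\'atk\'ai--Piazzera \cite[Theorem 3.25]{Batkai_Piazzera}, so your argument can only be measured against that source. Your route --- verifying the semigroup law via the off-diagonal identity $S_{t+s}=S_tT(s)+S_0(t)S_s$ by cases on $\sigma$, identifying the generator on $D(\mathcal{A}_0)$ through difference quotients, upgrading $\mathcal{A}_0\subseteq\mathcal{G}$ to equality by surjectivity of $\lambda-\mathcal{A}_0$ with the explicit formula $f(\sigma)=e^{\lambda\sigma}x+\int_\sigma^0e^{\lambda(\sigma-r)}b(r)\,dr$, and recovering (i) from (iii) via the triangular resolvent and Hille--Yosida --- is precisely the standard proof, and its algebraic and structural parts are correct, including the projection identity that the first component of $R(\lambda,\mathcal{A}_0)^n\binom{a}{0}$ is $R(\lambda,A)^na$.

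Two points need repair, both at places you partly flagged yourself. First, on the boundary layer $(-t,0]$ your justification ``the integrand is bounded while the interval has length $t$'' is not correct in general: after dividing by $t$, the piece $\frac1t(T(\sigma+t)x-x)$ \emph{is} uniformly bounded by $M\norm{Ax}$ --- note this uses $x\in D(A)$, not mere strong continuity --- but the piece $\frac1t(f(0)-f(\sigma))$ need not be bounded when $f'$ is only $p$-integrable: for $f'(u)=|u|^{-1/4}\in L^2$ the quotient at $\sigma=-t$ is of order $t^{-1/4}$. So even ``uniform control on the shrinking layer'' is unavailable; what saves the step is H\"older plus absolute continuity of the integral,
\[
\Big(\int_{-t}^0\tfrac1{t^p}\norm{f(0)-f(\sigma)}^p\,d\sigma\Big)^{1/p}
\le\tfrac1t\cdot t^{1-1/p}\norm{f'}_{L^p(-t,0)}\cdot t^{1/p}
=\norm{f'}_{L^p(-t,0)}\longrightarrow0,
\]
so your decomposition via $f(0)=x$ already contains the fix, but the quantitative estimate, not boundedness, must carry it. Second, in (iii)$\Rightarrow$(i) you write $x=R(\lambda,A)a$ before knowing $\lambda\in\rho(A)$; one must first deduce bijectivity of $\lambda-A$ from that of $\lambda-\mathcal{A}_0$ (surjectivity by projecting $R(\lambda,\mathcal{A}_0)\binom{a}{0}$ to the first component; injectivity because $(\lambda-A)x=0$ forces $(\lambda-\mathcal{A}_0)\binom{x}{e^{\lambda\cdot}x}=0$), and only then iterate the resolvent. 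A last cosmetic wrinkle, inherited from the paper's statement rather than your fault: for $p\neq2$ the second component of $D(\mathcal{A}_0)$ should be $W^{1,p}$ rather than the $W^{1,2}$ of \eqref{eq:abstract_A_domain_def}, and your difference-quotient step on $[-\tau,-t]$ tacitly uses exactly that.
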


Proposition~\ref{prop:Miyadera-Voigt_sufficient_condition_of_well-posedness} provides now a sufficient condition for the perturbation $(\mathcal{A}_\Psi,D(\mathcal{A}))$ such that $\mathcal{A}=\mathcal{A}_0+\mathcal{A}_{\Psi}$ is a generator, as given by the following

\begin{prop}\label{prop:non_homogeneous_abstract_Cauchy_problem_well-posedness}
Operator $(\mathcal{A},D(\mathcal{A}))$ generates a $C_0$-semigroup $\big(\mathcal{T}(t)\big)_{t\geq0}$ on $\calX$.
\end{prop}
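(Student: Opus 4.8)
The plan is to obtain $\calA$ as a Miyadera--Voigt perturbation of $\calA_0$. By Proposition~\ref{prop:abstract_A_0_as_generator} the operator $(\calA_0,D(\calA_0))$ generates the explicit $C_0$-semigroup $(\mathcal{T}_0(t))_{t\geq0}$ of \eqref{eq:T_0_semigroup_def}, and $\calA=\calA_0+\calA_\Psi$ with $\calA_\Psi\in\calL(X\times W^{1,2}(-\tau,0;X),\calX)$ by \eqref{eq:A_psi_def}; in particular the restriction of $\calA_\Psi$ to $D(\calA_0)$ is bounded from $(D(\calA_0),\norm{\cdot}_{\calA_0})$ into $\calX$. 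It therefore suffices to verify the hypothesis \eqref{eq:condition_on_perturbation_for_well-posedness} of Proposition~\ref{prop:Miyadera-Voigt_sufficient_condition_of_well-posedness} with $P=\calA_\Psi$, after which that proposition yields directly that $(\calA,D(\calA))$ generates a $C_0$-semigroup $(\mathcal{T}(t))_{t\geq0}$ on $\calX$.

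Next I would compute the integrand explicitly. Fix $v=\binom{x}{f}\in D(\calA_0)$. From \eqref{eq:T_0_semigroup_def} we have $\mathcal{T}_0(r)v=\binom{T(r)x}{S_r x+S_0(r)f}$, and by \eqref{eq:A_psi_def} together with the point-evaluation form \eqref{eq:delay_operator_def} of $\Psi$,
\[
\calA_\Psi\mathcal{T}_0(r)v=\binom{A_1\big(S_r x+S_0(r)f\big)(-\tau)}{0}.
\]
The key observation is that the two terms contribute on disjoint time ranges at the point $\sigma=-\tau$: from the definitions of $S_r$ and $S_0(r)$ one has $(S_r x)(-\tau)=0$ whenever $r<\tau$, while $(S_0(r)f)(-\tau)=f(r-\tau)$ for $r\in[0,\tau]$. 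Consequently, restricting attention to $r\in[0,\tau)$ collapses the expression to $\calA_\Psi\mathcal{T}_0(r)v=\binom{A_1 f(r-\tau)}{0}$, eliminating any dependence on the semigroup $(T(t))_{t\geq0}$ and hence on $x$.

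Finally I would estimate. For any $t_0\leq\tau$, the previous step and the change of variables $\sigma=r-\tau$ give
\[
\int_0^{t_0}\norm{\calA_\Psi\mathcal{T}_0(r)v}_\calX\,dr\leq\norm{A_1}\int_{-\tau}^{-\tau+t_0}\norm{f(\sigma)}_X\,d\sigma\leq\norm{A_1}\sqrt{t_0}\,\norm{f}_{L^2(-\tau,0;X)}\leq\norm{A_1}\sqrt{t_0}\,\norm{v}_\calX,
\]
where the middle inequality is Cauchy--Schwarz on an interval of length $t_0$. Choosing $t_0=\min\{\tau,\,1/(2\norm{A_1}^2)\}$ (any $t_0\in(0,\tau]$ if $A_1=0$) makes $q:=\norm{A_1}\sqrt{t_0}<1$, so \eqref{eq:condition_on_perturbation_for_well-posedness} holds and Proposition~\ref{prop:Miyadera-Voigt_sufficient_condition_of_well-posedness} applies. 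The only real subtlety is ensuring the Miyadera--Voigt constant can be forced below $1$; this is exactly what the vanishing of the $S_r$-term for $r<\tau$ buys us, since it confines the estimate to the $L^2$-history $f$ on a short interval whose length we are free to shrink.
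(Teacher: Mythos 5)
Your proposal is correct and takes essentially the same route as the paper: the same splitting $\calA=\calA_0+\calA_\Psi$, Proposition~\ref{prop:abstract_A_0_as_generator} for the unperturbed semigroup $(\mathcal{T}_0(t))_{t\geq0}$, and verification of the Miyadera--Voigt condition \eqref{eq:condition_on_perturbation_for_well-posedness} via a short-time Cauchy--Schwarz estimate yielding $q\sim\norm{A_1}\sqrt{t_0}<1$. Your observation that $(S_r x)(-\tau)=0$ for $r<\tau$, so that restricting to $t_0\leq\tau$ removes the $T(\cdot)x$ term entirely, is a mild sharpening of the paper's version, which retains that term and absorbs it into the constant $M=\max\{\sup_{s\in[0,1]}\norm{T(s)},1\}$; both arguments are otherwise identical.
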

\begin{proof}
We use Proposition \ref{prop:Miyadera-Voigt_sufficient_condition_of_well-posedness} with $(\mathcal{A},D(\mathcal{A}))$ given by \eqref{eq:abstract_A_def}-- \eqref{eq:abstract_A_domain_def} and represented as sum of \eqref{eq:A_0_def} and \eqref{eq:A_psi_def} with $\Psi$ given by \eqref{eq:delay_operator_def}. Thus a sufficient condition for $(\mathcal{A},D(\mathcal{A}))$ to be a generator of a strongly continuous semigroup $\big(\mathcal{T}(t)\big)_{t\geq0}$ on $\calX$ is that the perturbation $\mathcal{A}_{\Psi}\in\mathcal{L}\big(X\times W^{1,2}(-\tau,0;X),\mathcal{X}\big)$ given by \eqref{eq:A_psi_def} satisfies 
\[
 \int_{0}^{t_0}\Big\|\mathcal{A}_\Psi\mathcal{T}_0(r)v\Big\|_\mathcal{X}dr\leq q\norm{v}_\mathcal{X}\qquad \forall v\in D(\mathcal{A}_0)
\]
for some $t_0>0$ and $0\leq q<1$.
 
Let $\binom{x}{f}\in D(\mathcal{A}_0)$ and let $0< t<1$. Then, using the notation of  Proposition \ref{prop:abstract_A_0_as_generator} and defining $M:=\max\big\{\sup_{s\in[0,1]}\norm{T(s)},1\big\}$ we have
  \begin{align*}
  &\int_{0}^{t}\Big\|\mathcal{A}_\Psi\mathcal{T}_0(r)v\Big\|_\mathcal{X}\,dr=\int_{0}^{t}\norm{\Psi(S_{r}x+S_{0}(r)f)}_X\,dr\\
  &=\int_{0}^{t}\norm{A_1(S_{r}x)(-\tau)+A_1S_{0}(r)f(-\tau)}_X\,dr\\
  &\leq\norm{A_1}\int_{0}^{t}\norm{T(-\tau+r)x}_X\,dr+\norm{A_1}\int_{0}^{t}\norm{f(-\tau+r)}_X\,dr\\
  &=\norm{A_1}\int_{-\tau}^{-\tau+t}\norm{T(s)x}_X\,ds+\norm{A_1}\int_{-\tau}^{-\tau+t}\norm{f(s)}_X\,ds\\
  &\leq tM\norm{A_1}\norm{x}_{X}+\norm{A_1}\bigg(\int_{-\tau}^{-\tau+t}\norm{f(s)}_{X}^2ds\bigg)^{\frac{1}{2}}\bigg(\int_{-\tau}^{-\tau+t}1^2ds\bigg)^{\frac{1}{2}}\\
  &\leq tM\norm{A_1}\norm{x}_{X}+t^{\frac{1}{2}}\norm{A_1}\norm{f}_{L^2}\leq t^{\frac{1}{2}}M\norm{A_1}\big(\norm{x}_{X}+\norm{f}_{L^2}\big)\\
  &\leq (2t)^{\frac{1}{2}}M\norm{A_1}\norm{v}_\mathcal{X},
  \end{align*}
  where we used H{\"o}lder's inequality and the fact that 
\[
  (\norm{x}_X+\norm{f}_{L^2(-\tau,0;X)})^2 \leq 2(\norm{x}_{X}^2+\norm{f}_{L^2(-\tau,0;X)}^2),
\]
 with $\norm{v}_\mathcal{X}=(\norm{x}_{X}^2+\norm{f}_{L^2(-\tau,0;X)}^2)^{\frac{1}{2}}$.
Setting now $t_0$ small enough so that
  \[
  q:=(2t_0)^{\frac{1}{2}}M\norm{A_1}<1
  \]
we arrive at our conclusion.
\end{proof}
\begin{rem}
The operator $\Psi$ defined in \eqref{eq:delay_operator_def} is a special case of a much wider class of operators that satisfy \eqref{eq:condition_on_perturbation_for_well-posedness} and thus $(\calA,D(\calA))$ in \eqref{eq:abstract_A_def} remains a generator of a strongly continuous semigroup $(\mathcal{T}(t))_{t\geq0}$ on $\mathcal{X}$. For the proof of this general case see \cite[Section 3.3.3]{Batkai_Piazzera}. 
\end{rem}

We obtained results in Proposition~\ref{prop:adjoint_of_abstract_A} and Proposition~\ref{prop:non_homogeneous_abstract_Cauchy_problem_well-posedness} only by specifying a particular type of delay operator in the general setting of Section~\ref{sec:2}. Let us now specify the state space as $X:=l^2$ with the standard orthonormal basis $(e_k)_{k\in\NN}$, $(A,D(A))$ is a diagonal generator of a $C_0$-semigroup $(T(t))_{t\geq0}$ on $X$ with a sequence of eigenvalues $(\lambda_k)_{k\in\NN}\subset\CC$ such that 
\begin{equation}\label{eq:sequence_of_eigenvalues_of_the_generator}
\sup_{k\in\NN}\Re\lambda_k<\infty,
\end{equation}
and $A_1\in\calL(X)$ is a diagonal operator with a sequence of eigenvalues $(\gamma_k)_{k\in\NN}\subset\CC$. 
In other words, we introduce a finite-time state delay into the standard setting for diagonal systems \cite[Chapter 2.6]{Tucsnak_Weiss}. Hence, the $C_0$-semigroup generator $(A,D(A))$ is given by 
\begin{align}\label{eq:diagonal_generator_def}
D(A)=\bigg\{z\in l^2(\CC):\sum_{k\in\NN}(1+|\lambda_k|^2)|z_k|^2<\infty\bigg\},\quad (Az)_k=\lambda_{k}z_{k}.
\end{align}

Making use of the pivot duality, as the space $X_1$ we take  $(D(A),\norm{\cdot}_{gr})$, where the graph norm $\norm{\cdot}_{gr}$ is equivalent to
\[
\norm{z}_1^2=\sum_{k\in\NN}(1+|\lambda_k|^2)|z_k|^2.
\]
The adjoint generator $(A^*,D(A^*))$ has the form 
\begin{align}\label{eq:adjoint_of_diagonal_generator}
D(A^*)=D(A),\qquad (A^*z)_k=\overbar{\lambda}_{k}z_{k}.
\end{align}
The space $X_{-1}$ consists of all sequences $z=(z_k)_{k\in\NN}\subset\CC$ for which
\begin{align}\label{eq:diagonal_system_extrapolation_space_norm_equiv}
\sum_{k\in\NN}\frac{|z_k|^2}{1+|\lambda_k|^2}<\infty,
\end{align}
and the square root of the above series gives an equivalent norm on $X_{-1}$.  By Proposition~\ref{prop:Sobolev_tower_adjoint} the space $X_{-1}$ can be written as $(D(A^*))'$. Note also that the operator $B\in\mathcal{L}(\CC,X_{-1})$ is represented by the sequence $(b_k)_{k\in\NN}\subset\CC$ as
$\mathcal{L}(\CC,X_{-1})$ can be identified with $X_{-1}$. 

This completes the description of the setting for a diagonal retarded system. From now on we consider system \eqref{eq:retarded_non-autonomous_system} reformulated as \eqref{eq:delay_non-autonomous_diff_eq} and its Cauchy problem representation \eqref{eq:non-autonomous_abstract_Cauchy_problem_def} as defined with the diagonal elements described in this section.
\subsection{Analysis of a single component}\label{subsection:analysis_of_a_single_component}
Let us now focus on the $k$-th component of \eqref{eq:retarded_non-autonomous_system}, namely 
\begin{equation}\label{eq:k-th_component_of_diagonal_non-autonomous_system}
\left\{\begin{array}{ll}
        \dot{z}_k(t)=\lambda_{k}z_{k}(t)+\gamma_{k}z_{k}(t-\tau)+b_ku(t)\\
        z_k(0)=x_k,        \\
        z_{0_k}=f_k,		\\
       \end{array}
\right.
\end{equation}
where $\lambda_k,\gamma_k,b_k,x_k\in\CC$, $f_k:=\langle f,l_k\rangle_{L^2(-\tau,0;X)}l_k$ with $l_k$ being the $k$-th component of an orthonormal basis in $L^2(-\tau,0;X)$ (see \cite[Chapter 3.5, p.138]{Balakrishnan} for a description of such bases). Here $b_k$ is the $k$th component of $B$.

For clarity of notation, until the end of this subsection, we drop the subscript $k$ and rewrite \eqref{eq:k-th_component_of_diagonal_non-autonomous_system} in the form
\begin{equation}\label{eq:k-th_component_Cauchy_problem}
\left\{\begin{array}{ll}
        \dot{z}(t)=\lambda z(t)+\Psi z_t+bu(t)\\
        z(0)=x,        \\
        z_{0}=f,		\\
       \end{array}
\right.
\end{equation}
where  the delay operator $\Psi\in\mathcal{L}(W^{1,2}(-\tau,0;\CC),\CC)$ is given by
\begin{equation}\label{eq:k-th_component_delay_operator}
\Psi(f)=\gamma f(-\tau)\qquad\forall f\in W^{1,2}(-\tau,0;\CC).
\end{equation}
The setting for the $k$-th component now includes the extended state space 
\begin{equation}\label{eq:k-th_component_Cartesian_product_def}
\mathcal{X}:=\CC\times L^2(-\tau,0;\CC) 
\end{equation}
with an inner product 
\begin{equation}\label{eq:k-th_component_inner_product_Cartesian_product}
 \bigg\langle\binom{x}{f},\binom{y}{g}\bigg\rangle_{\mathcal{X}}:=x\bar{y}+\langle f,g\rangle_{L^2(-\tau,0;\CC)}\quad\forall \binom{x}{f},\binom{y}{g}\in\mathcal{X}.
\end{equation}
The Cauchy problem for the $k$-th component is
\begin{equation}\label{eq:k-th_component_abstract_Cauchy_problem}
\left\{\begin{array}{ll}
        \dot{v}(t)=\mathcal{A}v(t)+\mathcal{B}u(t)\\
        v(0)=\binom{x}{f},        \\
       \end{array}
\right.
\end{equation} 
where $v:[0,\infty)\ni t\mapsto\binom{z(t)}{z_t}\in\mathcal{X}$ and $\mathcal{A}$ is an operator on $D(\calA)\subset\mathcal{X}$ defined as
\begin{equation}\label{eq:k-th_component_abstract_A_domain_def}
D(\mathcal{A}):=\bigg\{\binom{x}{f}\in \CC\times W^{1,2}(-\tau,0;\CC):\ f(0)=x\bigg\},
\end{equation}
\begin{equation}\label{eq:k-th_component_abstract_A_def}
\mathcal{A}:=\left(\begin{array}{cc} 
								\lambda & \Psi \\ 
								0 & \frac{d}{d\sigma}	
						\end{array}\right)
\end{equation}
and $\mathcal{B}:=\binom{b}{0}\in\mathcal{L}(\CC,\mathcal{X}_{-1})$. By Proposition~\ref{prop:Sobolev_tower_adjoint} and Proposition~\ref{prop:adjoint_of_abstract_A} for the $k$-th component we have
\begin{equation}\label{eq:k-th_component_extended_space_def}
\mathcal{X}_{-1}=D(\calA^*)',
\end{equation}
where
\begin{equation}\label{eq:k-th_component_abstract_A*_domain_def}
D(\mathcal{A^*})=\bigg\{\binom{y}{g}\in \CC\times W^{1,2}(-\tau,0;\CC):\ \overbar{\gamma}\,y=g(-\tau)\bigg\},
\end{equation}
\begin{equation}\label{eq:k-th_component_abstract_A*_def}
\mathcal{A^*}\binom{y}{g}=\left(\begin{array}{c} 
								\overbar{\lambda}y+g(0) \\ 
								-\frac{d}{d\sigma}g	
						\end{array}\right),
\end{equation}
and $D(\calA^*)'$ is the dual to $D(\calA^*)$ with respect to the pivot space $\calX$ in \eqref{eq:k-th_component_Cartesian_product_def}. 
As the proof is essentialy the same, we only state a $k$-th component version of Proposition~\ref{prop:non_homogeneous_abstract_Cauchy_problem_well-posedness}, namely
\begin{prop}\label{prop:wellposedness_of_k-th_component_ACP}
The operator $(\mathcal{A},D(\mathcal{A}))$ given by \eqref{eq:k-th_component_abstract_A_domain_def}--\eqref{eq:k-th_component_abstract_A_def} generates a strongly continuous semigroup $\big(\mathcal{T}(t)\big)_{t\geq0}$ on $\calX$ given by \eqref{eq:k-th_component_Cartesian_product_def}.\qed
\end{prop}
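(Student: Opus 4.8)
The plan is to mirror the proof of Proposition~\ref{prop:non_homogeneous_abstract_Cauchy_problem_well-posedness}, specialising every operator to the scalar $k$-th component. First I would split $\mathcal{A}=\mathcal{A}_0+\mathcal{A}_\Psi$ in the manner of \eqref{eq:A_0_def}--\eqref{eq:A_psi_def}, where now the top-left entry of $\mathcal{A}_0$ is multiplication by $\lambda$ on $\CC$ and $\mathcal{A}_\Psi$ carries the delay operator $\Psi$ of \eqref{eq:k-th_component_delay_operator} in its off-diagonal slot. Since multiplication by $\lambda$ generates the $C_0$-semigroup $(e^{\lambda t})_{t\geq0}$ on $\CC$, Proposition~\ref{prop:abstract_A_0_as_generator} (applied with $X=\CC$ and $p=2$) shows that $(\mathcal{A}_0,D(\mathcal{A}_0))$ generates the $C_0$-semigroup $(\mathcal{T}_0(t))_{t\geq0}$ given by the scalar version of \eqref{eq:T_0_semigroup_def}: here $T(t)=e^{\lambda t}$, $S_0(t)$ is the scalar nilpotent left shift, and $(S_t x)(s)=e^{\lambda(s+t)}x$ for $-t<s\leq0$.

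It then remains to verify the Miyadera--Voigt hypothesis \eqref{eq:condition_on_perturbation_for_well-posedness} for $\mathcal{A}_\Psi$, after which Proposition~\ref{prop:Miyadera-Voigt_sufficient_condition_of_well-posedness} delivers the conclusion. Writing $M:=\max\{\sup_{s\in[0,1]}|e^{\lambda s}|,1\}$, which is finite since $\Re\lambda<\infty$, I would estimate, for $v=\binom{x}{f}\in D(\mathcal{A}_0)$ and $0<t<1$,
\[
\int_0^t\big\|\mathcal{A}_\Psi\mathcal{T}_0(r)v\big\|_\mathcal{X}\,dr=\int_0^t|\gamma|\,\big|(S_r x+S_0(r)f)(-\tau)\big|\,dr,
\]
and then transcribe the displayed chain from the proof of Proposition~\ref{prop:non_homogeneous_abstract_Cauchy_problem_well-posedness}: splitting into the two terms, changing variables by $s=-\tau+r$, and applying Hölder's inequality to the $L^2$-term. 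This yields the bound $(2t)^{1/2}M|\gamma|\,\|v\|_\mathcal{X}$, with $\|A_1\|$ simply replaced by $|\gamma|$.

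Finally I would pick $t_0\in(0,1)$ small enough that $q:=(2t_0)^{1/2}M|\gamma|<1$, so that \eqref{eq:condition_on_perturbation_for_well-posedness} holds with these $t_0,q$; Proposition~\ref{prop:Miyadera-Voigt_sufficient_condition_of_well-posedness} then gives that $\mathcal{A}=\mathcal{A}_0+\mathcal{A}_\Psi$ generates a $C_0$-semigroup $(\mathcal{T}(t))_{t\geq0}$ on $\mathcal{X}$. I expect no genuine obstacle: the component being one-dimensional, $\mathcal{A}_\Psi$ is automatically bounded and $\mathcal{T}_0$ is completely explicit, so the estimate is the same one-line computation as before. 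The only point worth a word is the finiteness of $M$; for a single fixed $\lambda=\lambda_k$ this is automatic, but it is the uniform bound \eqref{eq:sequence_of_eigenvalues_of_the_generator} on the whole eigenvalue sequence that will later allow such component-wise estimates to be assembled for the full system.
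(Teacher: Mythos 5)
Your proposal is correct and follows exactly the route the paper intends: the paper omits the proof precisely because, as it says, it is ``essentially the same'' as that of Proposition~\ref{prop:non_homogeneous_abstract_Cauchy_problem_well-posedness}, i.e.\ the splitting $\mathcal{A}=\mathcal{A}_0+\mathcal{A}_\Psi$, Proposition~\ref{prop:abstract_A_0_as_generator} with $X=\CC$, and the Miyadera--Voigt estimate with $\norm{A_1}$ replaced by $\abs{\gamma}$, which is what you wrote. Your closing remark is also well judged: finiteness of $M$ is trivial for a fixed $\lambda_k$, and it is indeed the uniform condition \eqref{eq:sequence_of_eigenvalues_of_the_generator} that matters only when the components are reassembled.
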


Now that we know that the $k$-th component Cauchy problem \eqref{eq:k-th_component_abstract_Cauchy_problem} is well-posed we can formally write its  $\mathcal{X}_{-1}$-valued mild solution as
\begin{equation}\label{eq:k-th_component_abstract_Cauchy_problem_mild_solution}
 v(t)=\mathcal{T}(t)v(0)+\int_{0}^{t}\mathcal{T}(t-s)\mathcal{B}u(s) \, ds,
\end{equation}
where the control operator is $\mathcal{B}=\binom{b}{0}\in\mathcal{L}(\CC,\mathcal{X}_{-1})$ and $\mathcal{T}(t)\in\mathcal{L}(\mathcal{X}_{-1})$  is the extension of the $C_0$-semigroup generated by $(\mathcal{A},D(\mathcal{A}))$ in \eqref{eq:k-th_component_abstract_A_domain_def}--\eqref{eq:k-th_component_abstract_A_def}. 

The following, being a corollary from Proposition~\ref{prop abstract A resolvent operator}, gives the form of the $k$-th component resolvent $R(s,\mathcal{A})$. 
\begin{prop}\label{prop:k-th_component_abstract_A_resolvent_operator}
For $s\in\mathbb{C}$ and for all $1\leq p<\infty$ there is 
\begin{equation}\label{k-th component condition on resolvent sets}
s\in\rho(\mathcal{A})\text{ if and only if } s\in\rho(\lambda+\Psi_{s}).
\end{equation}
Moreover, for $s\in\rho(\mathcal{A})$ the resolvent $R(s,\mathcal{A})$ is given by

\begin{equation}\label{eq:k-th_component_resolvent_of_abstract_A}
	R(s,\mathcal{A})=\left(\begin{array}{ll} 
								R(s,\lambda+\Psi_{s}) & R(s,\lambda+\Psi_{s})\Psi R(s,A_0) \\ 
								\epsilon_{s}R(s,\lambda+\Psi_{s}) & (\epsilon_{s}R(s,\lambda+\Psi_{s})\Psi+I)R(s,A_0)
								\end{array}\right),
\end{equation}
where $R(s,\lambda+\Psi_s)\in\mathcal{L}(\CC)$,
\begin{equation}
 R(s,\lambda+\Psi_s)=\frac{1}{s-\lambda-\gamma\exp^{-s\tau}} \quad \forall s\in\underrightarrow{\CC}_{\abs{\lambda}+\abs{\gamma}}
\end{equation}
and $R(s,A_0)\in\mathcal{L}(L^2(-\tau,0;\CC))$,
 \begin{equation}
R(s,A_0)f(r)=\int_{r}^{0}\exp^{s(r-t)}f(t) \,dt
 \quad r\in[-\tau,0]\quad \forall s\in\underrightarrow{\CC}_{\abs{\lambda}+\abs{\gamma}}.
\end{equation}
\end{prop}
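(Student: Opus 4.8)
The plan is to specialise the general resolvent formula of Proposition~\ref{prop abstract A resolvent operator} to the scalar setting in which $A$ is multiplication by $\lambda$ on $\CC$ and $\Psi$ is the point evaluation \eqref{eq:k-th_component_delay_operator}. Once we observe that the present $(\calA,D(\calA))$ is exactly the object of Proposition~\ref{prop abstract A resolvent operator} with $X=\CC$, the equivalence \eqref{k-th component condition on resolvent sets} and the block structure \eqref{eq:k-th_component_resolvent_of_abstract_A} are immediate transcriptions of that proposition (via \eqref{eqn relosvent of abstract A}). What genuinely remains is to identify the two entries $R(s,\lambda+\Psi_s)$ and $R(s,A_0)$ explicitly and to verify that the half-plane $\underrightarrow{\CC}_{\abs{\lambda}+\abs{\gamma}}$ lies in the resolvent set.

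First I would compute $\Psi_s$. By definition $\Psi_s x=\Psi(\epsilon_s(\cdot)x)=\gamma\,\epsilon_s(-\tau)\,x=\gamma e^{-s\tau}x$, so on $\CC$ the operator $\lambda+\Psi_s$ is simply multiplication by the scalar $\lambda+\gamma e^{-s\tau}$. Hence $s\in\rho(\lambda+\Psi_s)$ precisely when $s-\lambda-\gamma e^{-s\tau}\neq0$, and in that case $R(s,\lambda+\Psi_s)=(s-\lambda-\gamma e^{-s\tau})^{-1}$. To justify the stated domain of validity I would use the elementary estimate: for $\re s>\abs{\lambda}+\abs{\gamma}\geq0$ one has $\abs{e^{-s\tau}}=e^{-\tau\re s}\leq1$, whence $\abs{s-\lambda-\gamma e^{-s\tau}}\geq\re s-\abs{\lambda}-\abs{\gamma}>0$. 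Thus the denominator never vanishes on $\underrightarrow{\CC}_{\abs{\lambda}+\abs{\gamma}}$, and by \eqref{k-th component condition on resolvent sets} this half-plane is contained in $\rho(\calA)$.

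Next I would determine $R(s,A_0)$. Since $A_0=\tfrac{d}{d\sigma}$ with domain $\{g\in W^{1,2}(-\tau,0;\CC):g(0)=0\}$, computing $R(s,A_0)f=g$ amounts to solving the first-order linear ODE $sg-g'=f$ subject to $g(0)=0$. Multiplying by the integrating factor $e^{-sr}$ turns this into $(e^{-sr}g(r))'=-e^{-sr}f(r)$; integrating from $r$ to $0$ and using $g(0)=0$ yields $g(r)=\int_{r}^{0}e^{s(r-t)}f(t)\,dt$, which is exactly the claimed formula. I would also note that $A_0$ generates the nilpotent left shift and so has empty spectrum, meaning $R(s,A_0)$ is in fact defined for every $s\in\CC$; the restriction to the half-plane in the statement merely matches the region on which $R(s,\lambda+\Psi_s)$ has been exhibited. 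A short check confirms that this $g$ indeed lies in $W^{1,2}(-\tau,0;\CC)$ with $g(0)=0$, so that $R(s,A_0)\in\calL(L^2(-\tau,0;\CC))$.

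Finally, substituting the two explicit entries into the general block formula \eqref{eqn relosvent of abstract A} produces \eqref{eq:k-th_component_resolvent_of_abstract_A}. I expect the only mildly delicate step to be the verification that $\underrightarrow{\CC}_{\abs{\lambda}+\abs{\gamma}}\subset\rho(\calA)$, that is, the non-vanishing of $s-\lambda-\gamma e^{-s\tau}$ on that half-plane; everything else is a direct specialisation of Proposition~\ref{prop abstract A resolvent operator} together with the elementary ODE solution for $R(s,A_0)$.
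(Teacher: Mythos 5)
Your proposal is correct and follows essentially the same route as the paper, which derives the proposition as a corollary of the general resolvent formula in Proposition~\ref{prop abstract A resolvent operator} (deferring the details to the analogous proof of \cite[Proposition 3.3]{Partington_Zawiski_2019}): you specialise to $X=\CC$, compute $\Psi_s x=\gamma e^{-s\tau}x$, solve the first-order ODE for $R(s,A_0)$, and verify non-vanishing of $s-\lambda-\gamma e^{-s\tau}$ on $\underrightarrow{\CC}_{\abs{\lambda}+\abs{\gamma}}$ via the estimate $\abs{s-\lambda-\gamma e^{-s\tau}}\geq\re s-\abs{\lambda}-\abs{\gamma}$. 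Your write-up in fact supplies the explicit computations the paper leaves implicit, and all steps check out.
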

\begin{proof}
The proof runs along the lines of \cite[Proposition 3.3]{Partington_Zawiski_2019} with necessary adjustments for the forms of diagonal operators involved.
\end{proof}
By Proposition~\ref{prop:k-th_component_abstract_A_resolvent_operator} the resolvent component $R(s,\lambda+\Psi_s)$ is analytic in $\underrightarrow{\CC}_{\abs{\lambda}+\abs{\gamma}}$. To ensure analyticity of $R(s,\lambda+\Psi_s)$ in $\CC_{+}$, as required to apply $H(\CC_+)$-based approach, we introduce the following sets.

\begin{rem}
We take the principal argument of $\lambda$ to be $\Arg\lambda\in(-\pi,\pi]$.
\end{rem}
Let $\DD_r\subset\CC$ be an open disc centred at $0$ with radius $r>0$. We shall require the following subset of the complex plane, depending on $\tau>0$ and $a\in(-\infty,\frac{1}{\tau}]$ and shown in 
Fig.~\ref{fig:Lambda_tau_a_for_different_a}, namely: \index{Greek@\textbf{Greek symbols}!k@$\Lambda_{\tau,a}$}
\begin{itemize}
\item for $a<0$:
\begin{align}\label{eq:lambda_tau_a_neg}
\begin{split}
\Lambda_{\tau,a}:=&\bigg\{\eta\in\CC\setminus\DD_{|a|}:\re{\eta}+a<0,\,|\eta|<|\eta_\pi|,\\
&\abs{\Arg\eta}>\tau\sqrt{\abs{\eta}^2-a^2}+\arctan\Big(-\frac{1}{a}\sqrt{\abs{\eta}^2-a^2}\Big)\bigg\}\cup\DD_{|a|},
\end{split}
\end{align}
where $\eta_\pi$ is such that
\[
\sqrt{\abs{\eta_\pi}^2-a^2}\tau+\arctan\bigg(-\frac{1}{a}\sqrt{\abs{\eta_\pi}^2-a^2}\bigg)=\pi;
\]
\item for $a=0$:
\begin{align}\label{eq:lambda_tau_a_zero}
\Lambda_{\tau,a}:=&\bigg\{\eta\in\CC\setminus\{0\}:\re\eta<0,\, |\eta|<\frac{\pi}{2\tau},\, \abs{\Arg\eta}>\tau\abs{\eta}+\frac{\pi}{2}\bigg\};
\end{align}
\item for $0<a\leq\frac{1}{\tau}$
\begin{align}\label{eq:lambda_tau_a_pos}
\begin{split}
\Lambda_{\tau,a}:=&\bigg\{\eta\in\CC:\re{\eta}+a<0,\,|\eta|<|\eta_\pi|,\\
&\abs{\Arg\eta}>\tau\sqrt{\abs{\eta}^2-a^2}+\arctan\Big(-\frac{1}{a}\sqrt{\abs{\eta}^2-a^2}\Big)+\pi\bigg\},
\end{split}
\end{align}
where $\eta_\pi$ is such that $\abs{\eta_\pi}>a$ and
\[
\sqrt{\abs{\eta_\pi}^2-a^2}\tau+\arctan\bigg(-\frac{1}{a}\sqrt{\abs{\eta_\pi}^2-a^2}\bigg)=0.
\] 
\end{itemize}
The analyticity of $R(s,\lambda+\Psi_s)$ in $\CC_{+}$ follows now from the following \cite{Kapica_Zawiski_2022}
\begin{prop}\label{cor:stability_of_complex_polynomial}
Let $\tau>0$ and let $\lambda,\gamma,\eta\in\CC$ such that $\lambda=a+ib\in\underleftarrow{\CC}_{\frac{1}{\tau}}$. Then
\begin{itemize}
\item[(i)]  every solution of the equation $s-a -\eta\exp^{-s\tau}=0$ belongs to $\CC_-$ if and only if  $\eta\in\Lambda_{\tau,a}$;
\item[(ii)] every solution of  
\begin{equation}\label{eq:complex_polynomial_full_zeros}
s-\lambda-\gamma\exp^{-s\tau}=0
\end{equation}
and its version with conjugate coefficients
\begin{equation}\label{eq:complex_polynomial_full_conjugate_zeros}
s-\overbar{\lambda}-\overbar{\gamma}\exp^{-s\tau}=0
\end{equation}
belongs to $\CC_-$ if and only if $\gamma\exp^{-ib\tau}\in\Lambda_{\tau,a}$.
\end{itemize}
\end{prop}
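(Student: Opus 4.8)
The strategy is to reduce (ii) to (i) and then to analyse (i) by a boundary--locus (D--partition) argument on the imaginary axis. For the reduction I substitute $s=u+ib$ in \eqref{eq:complex_polynomial_full_zeros}. Since $\re s=\re u$ and $\exp^{-s\tau}=\exp^{-ib\tau}\exp^{-u\tau}$, the equation becomes $u-a-\eta\exp^{-u\tau}=0$ with $\eta=\gamma\exp^{-ib\tau}$, which is exactly the equation of (i); moreover $a=\re\lambda<\tfrac1\tau$ by hypothesis, which is the regime of (i). Hence all roots of \eqref{eq:complex_polynomial_full_zeros} lie in $\CC_-$ iff all roots of the shifted equation do, i.e.\ (by (i)) iff $\gamma\exp^{-ib\tau}\in\Lambda_{\tau,a}$. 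The conjugate equation \eqref{eq:complex_polynomial_full_conjugate_zeros} needs no separate treatment: conjugating \eqref{eq:complex_polynomial_full_zeros} and using $\tau\in\RR$ shows its roots are precisely the complex conjugates of the roots of \eqref{eq:complex_polynomial_full_zeros}, hence have the same real parts, so the two stability statements coincide.

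\emph{Boundary locus for (i).} I regard $\eta$ as a parameter and track the zeros $s(\eta)$ of $g(s):=s-a-\eta\exp^{-s\tau}$; they depend continuously on $\eta$ and can migrate between $\CC_-$ and $\CC_+$ only through $i\RR$. A purely imaginary zero $s=i\omega$ occurs exactly when $\eta=(i\omega-a)\exp^{i\omega\tau}$, which defines the boundary locus $\Gamma$. Along $\Gamma$ one has $|\eta|=\sqrt{\omega^2+a^2}$ and $\Arg\eta=\omega\tau+\Arg(i\omega-a)$. Writing $|\omega|=\sqrt{|\eta|^2-a^2}$ and evaluating $\Arg(i\omega-a)$ in each regime gives, for $\omega>0$, exactly the functions on the right--hand sides of \eqref{eq:lambda_tau_a_neg}, \eqref{eq:lambda_tau_a_zero}, \eqref{eq:lambda_tau_a_pos}: the term $\arctan(-\tfrac1a\sqrt{|\eta|^2-a^2})$ when $a<0$ (since $i\omega-a$ lies in the first quadrant), the value $\tfrac\pi2$ when $a=0$, and the same expression shifted by $\pi$ when $a>0$ (since then $i\omega-a$ lies in the second quadrant); the $\omega<0$ branch produces the mirror curve, which is why the definition uses $\abs{\Arg\eta}$. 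The constraint $|\eta|<|\eta_\pi|$ selects the first passage of $\Gamma$, keeping $\Arg\eta$ within the principal range $(-\pi,\pi]$, so that the single inequality $\abs{\Arg\eta}>\dots$ together with $\re\eta+a<0$ cuts out one side of this branch of $\Gamma$.

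\emph{Counting and conclusion.} It remains to identify this component as the one with no zeros in $\CC_+$. The number of zeros in $\CC_+$ is constant on each component of $\CC\setminus\Gamma$ and changes only when $\eta$ crosses $\Gamma$. For $a<0$ I anchor inside $\DD_{|a|}$: there $|\eta|<|a|$ makes $\sqrt{\omega^2+a^2}=|\eta|$ unsolvable, so $\Gamma$ is not met, and at $\eta=0$ the unique zero is $s=a\in\CC_-$; hence the disc and the adjacent component of $\Lambda_{\tau,a}$ carry no unstable zeros, which is why $\DD_{|a|}$ is appended. For $a\ge0$ the point $\eta=0$ gives the unstable zero $s=a$, so instead I count directly on the interior of $\Lambda_{\tau,a}$ by the argument principle on the contour $i[-R,R]$ closed by the right semicircle, on which $g(s)\sim s$ because $\abs{\exp^{-s\tau}}\le1$ for $\re s\ge0$, reading the number of right--half--plane zeros from the net change of $\arg g(i\omega)$. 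A transversality computation, differentiating $g(s(\eta),\eta)=0$ at a crossing point, fixes the crossing direction and shows that leaving $\Lambda_{\tau,a}$ across $\Gamma$ pushes a zero into $\CC_+$; combined with the anchor this gives the equivalence.

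\emph{Main obstacle.} The crux is matching the \emph{elementary} inequalities defining $\Lambda_{\tau,a}$ with the global stable component $\{\text{no zeros in }\CC_+\}$ uniformly over the three regimes of $a$: justifying the branch selection $|\eta|<|\eta_\pi|$, accounting for the $+\pi$ shift and the appended disc that distinguish the cases, and pinning down the sign of the crossing so that the \emph{interior} is stable rather than unstable. The hypothesis $a<\tfrac1\tau$ is indispensable here: for $a>0$ the point $\eta_\pi$ --- that is, a positive root $x=\sqrt{|\eta_\pi|^2-a^2}$ of $\tau x=\arctan(x/a)$ --- exists iff $\tfrac1a>\tau$, i.e.\ iff $a<\tfrac1\tau$, so without it $\Lambda_{\tau,a}$ degenerates and no stabilising $\gamma$ exists.
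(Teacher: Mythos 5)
A preliminary point: the paper contains no proof of this proposition at all --- it is imported verbatim from the external reference \cite{Kapica_Zawiski_2022}, so there is no internal argument to compare you against, and your attempt must be judged on its own terms. On those terms, your reduction of (ii) to (i) is correct and complete: the substitution $s=u+ib$ turns \eqref{eq:complex_polynomial_full_zeros} into $u-a-\eta\exp^{-u\tau}=0$ with $\eta=\gamma\exp^{-ib\tau}$ without changing real parts, and conjugation shows that the roots of \eqref{eq:complex_polynomial_full_conjugate_zeros} are exactly the conjugates of those of \eqref{eq:complex_polynomial_full_zeros}; this also explains why the criterion depends only on $a$ and on $\gamma\exp^{-ib\tau}$. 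Your boundary locus is also right: a root $s=i\omega$ forces $\eta=(i\omega-a)\exp^{i\omega\tau}$, and evaluating $\Arg(i\omega-a)$ in the three regimes of $a$ reproduces the curves in \eqref{eq:lambda_tau_a_neg}--\eqref{eq:lambda_tau_a_pos}, including the $+\pi$ shift for $a>0$ and the conjugation symmetry behind the use of $\abs{\Arg\eta}$. Your closing observation that $\eta_\pi$ exists for $a>0$ precisely when $a\tau<1$ correctly ties the nondegeneracy of $\Lambda_{\tau,a}$ to the hypothesis $\lambda\in\underleftarrow{\CC}_{\frac{1}{\tau}}$.

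What keeps this from being a proof is that the decisive steps are announced rather than performed, and one needs an ingredient you did not state. (a) The claim that zeros ``can migrate between $\CC_-$ and $\CC_+$ only through $i\RR$'' is not automatic for an entire function with infinitely many zeros: you must first observe that any root with $\re s\geq0$ satisfies $\abs{s-a}=\abs{\eta}\abs{\exp^{-s\tau}}\leq\abs{\eta}$, so right-half-plane roots are confined to a fixed compact set and cannot enter from infinity; only then is the unstable count finite and locally constant on the complement of the locus. (Incidentally, this bound gives stability on $\DD_{\abs{a}}$ for $a<0$ immediately: $\re s\geq0$ and $a<0$ force $\abs{s-a}\geq\abs{a}>\abs{\eta}$, a contradiction --- no continuation from $\eta=0$ is needed.) (b) For $a\geq0$ you have no stable anchor ($\eta=0$ yields the root $s=a\in\overline{\CC_+}$), and the argument-principle count on $i[-R,R]$ closed by a semicircle, which is where the actual work lies, is named but not carried out. (c) The transversality claim is likewise only asserted; differentiating gives $ds/d\eta=\exp^{-s\tau}/(1+\tau(s-a))$ at a root, and at the $\omega=0$ crossing the denominator equals $1-a\tau$, so the standing hypothesis $a<1/\tau$ is needed here as well, not only for the existence of $\eta_\pi$ --- a point your sketch misses. (d) Finally, the identification of the set cut out by the elementary inequalities with a single crossing-free component (monotonicity of $\omega\mapsto\omega\tau+\Arg(i\omega-a)$, the branch selection $\abs{\eta}<\abs{\eta_\pi}$, the role of $\re\eta+a<0$, and the fact that the locus touches the circle $\abs{\eta}=\abs{a}$ at $\eta=-a$ when $a<0$) is the heart of the matter and remains at the level of assertion, as you yourself flag. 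In short: the strategy (D-partition after the translation $s\mapsto s+ib$) is sound and is surely the method of the cited source, but the case analysis constituting the theorem's content is still to be done.
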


\begin{figure}[htb]
\begin{center}
\includegraphics[scale=0.6]{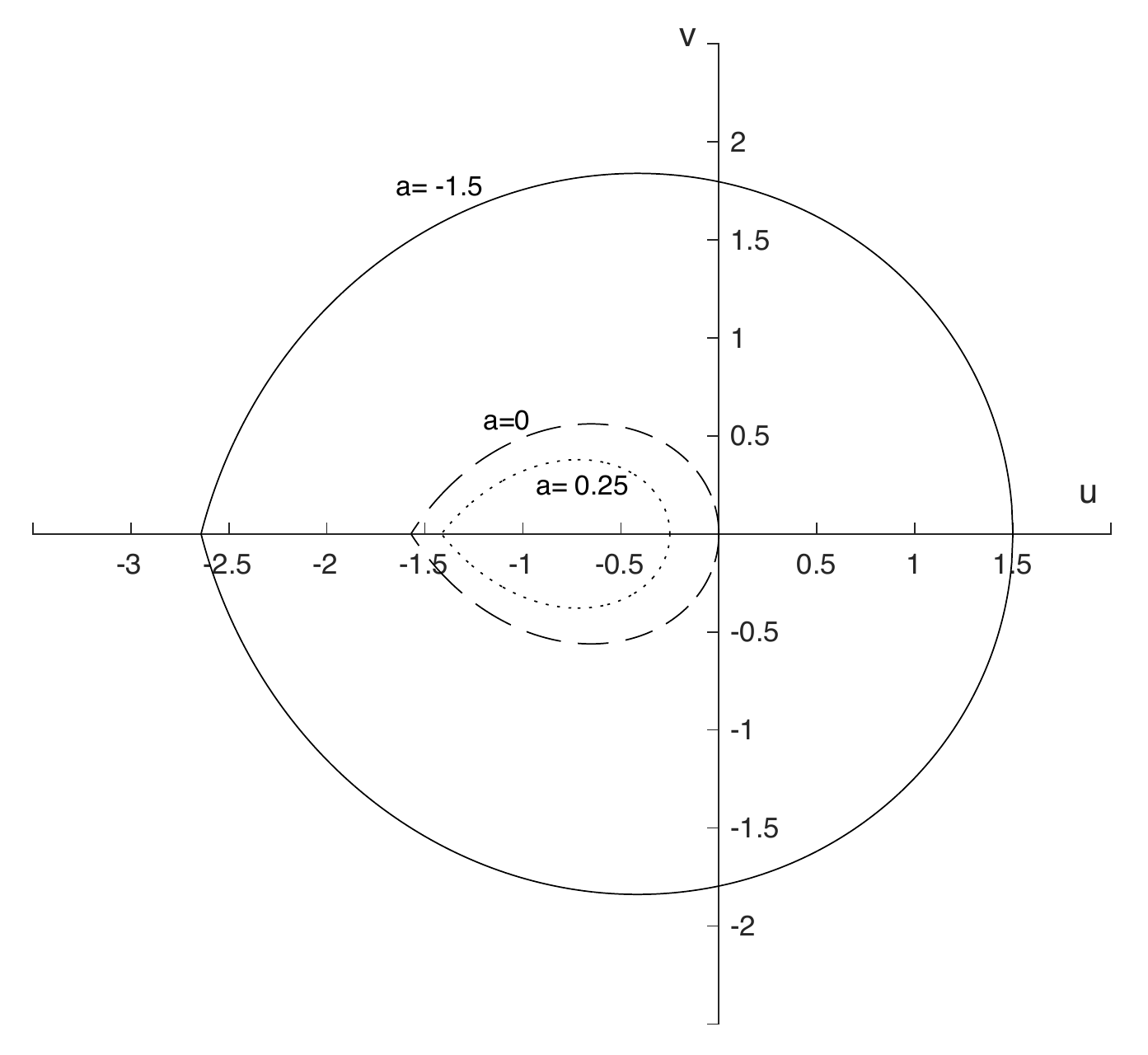}
\end{center}
\caption{Outer boundaries for some $\Lambda_{\tau,a}$ sets, defined in \eqref{eq:lambda_tau_a_neg}--\eqref{eq:lambda_tau_a_pos} with $\eta=u+iv$, for $\tau=1$ and different values of $a$: solid for $a=-1.5$, dashed for $a=0$ and dotted for $a=0.25$.}
\label{fig:Lambda_tau_a_for_different_a}
\end{figure}

In relation to the form of $R(s,\lambda+\Psi_s)$ consider the following technical result based on \cite{Walton_Marshall_1984}, originally stated for real coefficients, that for complex ones becomes
\begin{lem}\label{lem:cost_integral_in_TDS}
Let $\tau>0$ and $\lambda,\gamma\in\CC$ such that $\lambda=a+ib$ with $a\leq\frac{1}{\tau}$, $b\in\RR$ and $\gamma\exp^{-ib\tau}\in\Lambda_{\tau,a}$. Then 
\begin{equation}\label{eq:cost_integral_TDS_full_def}
J:=\frac{1}{2\pi}\int_{-\infty}^{\infty}\frac{d\omega}{\vert i\omega-\lambda-\gamma\exp^{-i\omega\tau}\vert^2}\\
=
\left\{\begin{array}{ll}
J_a, 	& |\gamma|<|a|\\
J_e,  &  |\gamma|=|a|\\
J_\gamma,  &   |\gamma|>|a|
\end{array}
\right.
\end{equation}

where
\begin{equation}\label{eq:cost_integral_TDS_Ja}
\begin{split}
J_a:=&\frac{1}{2\sqrt{a^2-|\gamma|^2}}\times\\
		&\times\frac{\exp^{\sqrt{a^2-|\gamma|^2}\tau}\big(a-\sqrt{a^2-|\gamma|^2}\big)
			+\exp^{-\sqrt{a^2-|\gamma|^2}\tau}\big(-a-\sqrt{a^2-|\gamma|^2}\big)}
			{2\Re(\gamma\exp^{-ib\tau})
			+\exp^{\sqrt{a^2-|\gamma|^2}\tau}\big(a-\sqrt{a^2-|\gamma|^2}\big)
			-\exp^{-\sqrt{a^2-|\gamma|^2}\tau}\big(-a-\sqrt{a^2-|\gamma|^2}\big)},
\end{split}
\end{equation}
\begin{equation}\label{eq:cost_integral_TDS_Je}
\begin{split}
J_e:=&\frac{1}{2}\frac{a\tau-1}{\re(\gamma\exp^{-ib\tau})+a},
\end{split}
\end{equation}
and
\begin{equation}\label{eq:cost_integral_TDS_Jg}
\begin{split}
J_\gamma:=&\frac{1}{2\sqrt{|\gamma|^2-a^2}}\times\\
	&\times\frac{a\sin(\sqrt{|\gamma|^2-a^2}\tau)-\sqrt{|\gamma|^2-a^2}\cos(\sqrt{|\gamma|^2-a^2}\tau)}
			{\Re(\gamma\exp^{-ib\tau})+a\cos(\sqrt{|\gamma|^2-a^2}\tau)+\sqrt{|\gamma|^2-a^2}\sin(\sqrt{|\gamma|^2-a^2}\tau)}.
\end{split}
\end{equation}
\end{lem}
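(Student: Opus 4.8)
The plan is to read $J$ as a squared $H^2$-norm and evaluate it through Plancherel. First I would remove the imaginary part of $\lambda$ by the substitution $\nu=\omega-b$: since $i\omega-\lambda-\gamma\exp^{-i\omega\tau}=i\nu-a-(\gamma\exp^{-ib\tau})\exp^{-i\nu\tau}$ and $d\omega=d\nu$, putting $\gamma':=\gamma\exp^{-ib\tau}$ reduces everything to the case of real $\lambda=a$ with the (still complex) coefficient $\gamma'$, for which $|\gamma'|=|\gamma|$ and $\re\gamma'=\re(\gamma\exp^{-ib\tau})$ are exactly the quantities occurring in \eqref{eq:cost_integral_TDS_Ja}--\eqref{eq:cost_integral_TDS_Jg}. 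Under the hypothesis $\gamma'\in\Lambda_{\tau,a}$, Proposition~\ref{cor:stability_of_complex_polynomial}(i) guarantees that every zero of $s-a-\gamma'\exp^{-s\tau}$ lies in $\CC_-$, so $G(s):=(s-a-\gamma'\exp^{-s\tau})^{-1}$ is analytic on $\CC_+$ and is the Laplace transform of the fundamental solution $x$ of $\dot x=ax+\gamma' x(\cdot-\tau)$, $x(0)=1$, $x|_{[-\tau,0)}=0$, which decays exponentially and hence lies in $L^2(0,\infty)$. By Theorem~\ref{thm:Paley-Wiener} and \eqref{eq:inner_product_on_H2(C+)}, $J=\norm{G}_{H^2(\CC_+)}^2=\int_0^\infty|x(t)|^2\,dt=\rho(0)$, where $\rho(\theta):=\int_0^\infty x(t+\theta)\overline{x(t)}\,dt$ is the autocorrelation of $x$.

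Next I would turn $\rho$ into the solution of a constant-coefficient ODE. It is Hermitian, $\rho(-\theta)=\overline{\rho(\theta)}$, and differentiating under the integral sign and inserting the delay equation gives $\rho'(\theta)=a\rho(\theta)+\gamma'\rho(\theta-\tau)$ for $\theta>0$; for $\theta\in(0,\tau)$ the Hermitian symmetry turns this into the non-local relation $\rho'(\theta)=a\rho(\theta)+\gamma'\overline{\rho(\tau-\theta)}$. Differentiating once more and eliminating the term $\gamma'\overline{\rho(\tau-\theta)}$ by means of this relation yields
\[
\rho''(\theta)=(a^2-|\gamma|^2)\rho(\theta),\qquad \theta\in(0,\tau),
\]
whose characteristic roots $\pm\sqrt{a^2-|\gamma|^2}$ produce precisely the three regimes: hyperbolic when $|\gamma|<|a|$, affine when $|\gamma|=|a|$ (here $\rho''=0$), and trigonometric when $|\gamma|>|a|$, matching $J_a$, $J_e$ and $J_\gamma$ respectively.

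It then remains to fix the two integration constants and read off $\rho(0)$. Substituting the general solution of the second-order equation back into the first-order functional relation and matching exponentials gives a homogeneous coupling of the constants with their complex conjugates; I would check that this coupling is consistent exactly because $a^2-r^2=|\gamma|^2$ (with $r=\sqrt{a^2-|\gamma|^2}$), and that it expresses one constant through the conjugate of the other. The remaining freedom is then removed by two real conditions: the reality $\rho(0)=J\in\RR$, and a normalization coming from the jump of $\rho'$ at the origin. Computing $\rho'(0^+)$ and $\rho'(0^-)$ directly from the definition and using the initial jump $x(0^+)=1$ of the fundamental solution gives $\rho'(0^+)-\rho'(0^-)=-1$, hence $2\re\rho'(0^+)=-1$. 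Solving this small linear system and evaluating $\rho(0)$ yields $J_a$, $J_e$, $J_\gamma$ after elementary algebra in each case.

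The hard part will be the last step: the conditions obtained by forcing the solution of $\rho''=(a^2-|\gamma|^2)\rho$ to satisfy the non-local relation are rank-deficient, so the whole computation hinges on extracting the correct scale from the derivative jump and on carefully tracking the complex conjugates of the constants, since $\gamma'$ is genuinely complex (unlike the real-coefficient setting of \cite{Walton_Marshall_1984}). Justifying the differentiation under the integral sign, the convergence of $\rho$, and the boundary term entering $\rho'(0^-)$ all rely on the exponential decay of $x$, i.e. on the stability guaranteed by $\gamma'\in\Lambda_{\tau,a}$; this is what must be established before the case-by-case algebra can be trusted.
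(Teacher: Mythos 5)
Your proposal is correct, but it takes a genuinely different route from the paper's. The paper stays entirely in the frequency domain, adapting Walton--Marshall's contour technique to complex coefficients: it writes $J=\frac{1}{2\pi i}\int_{-i\infty}^{i\infty}E_1(s)E_2(s)\,ds$, adds and subtracts the rational term $\frac{s-\lambda}{(s+\overbar{\lambda})(s-\lambda)+\abs{\gamma}^2}$, and closes one part of the integrand to the left and the other to the right, so that only the residues at the roots $z_1,z_2$ of \eqref{eq:cost_integral_additional_poles} survive; the trichotomy $|\gamma|\lessgtr|a|$ enters through the location of $z_1,z_2$ (horizontally shifted, a double root, or on the imaginary axis, the latter requiring the indented contours of Figure~\ref{fig:cost_integral_contours}b). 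You instead pass to the time domain via Theorem~\ref{thm:Paley-Wiener} and compute $J=\rho(0)$ from the autocorrelation of the fundamental solution, the same trichotomy appearing as the sign of $a^2-|\gamma|^2$ in $\rho''=(a^2-|\gamma|^2)\rho$ on $(0,\tau)$. I checked your scheme in detail and it reproduces \eqref{eq:cost_integral_TDS_Ja}--\eqref{eq:cost_integral_TDS_Jg} exactly: the reduction to $\gamma'=\gamma\exp^{-ib\tau}$ is sound; the nonlocal relation $\rho'(\theta)=a\rho(\theta)+\gamma'\overline{\rho(\tau-\theta)}$ and its differentiated form close up correctly because $a$ is real; and the jump normalization $2\re\rho'(0^+)=-1$ follows from $\rho'(0^-)=-\overline{\rho'(0^+)}$ together with $\int_0^\infty\frac{d}{dt}|x(t)|^2\,dt=-|x(0^+)|^2=-1$. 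For instance, when $|\gamma|>|a|$, writing $\rho=A\cos\mu\theta+B\sin\mu\theta$ with $\mu=\sqrt{|\gamma|^2-a^2}$ and $A=J$ real, coefficient matching gives $B=-J\,\frac{\mu\sin\mu\tau+a\cos\mu\tau+\gamma'}{a\sin\mu\tau-\mu\cos\mu\tau}$, and $\re B=-\frac{1}{2\mu}$ yields precisely $J_\gamma$; the hyperbolic and affine cases give $J_a$ and $J_e$ in the same way. Both arguments invoke Proposition~\ref{cor:stability_of_complex_polynomial} identically, to place the characteristic roots in $\CC_-$. As for what each approach buys: yours avoids the delicate contour indentation around imaginary-axis poles in the cases $|\gamma|\geq|a|$, gives the intermediate object $\rho$ a Lyapunov-type meaning (it is the complex-coefficient analogue of the delay Lyapunov function method), and generalizes naturally beyond the scalar case; the price, which you correctly flag, is that one must justify exponential decay of $x$, differentiation under the integral, and the consistency of the overdetermined, rank-deficient matching system --- consistency here is guaranteed by the existence of $\rho$ itself, and uniqueness of the extracted value by positivity of $J$. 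The paper's residue computation, by contrast, is purely algebraic once the semicircle estimate \eqref{eq:cost_integral_at_infinity} is in place, and it dispatches the boundary case $|\gamma|=|a|$ with a single double-pole residue rather than a separate affine solve.
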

The proof of Lemma~\ref{lem:cost_integral_in_TDS} is a rather technical one and so it is in the Appendix section. We easily obtain
\begin{cor}\label{cor:cost_integral_in_TDS_without_lambda}
Let $\tau>0$, $\lambda=0$ and $\gamma\in\Lambda_{\tau,0}$. Then 
\begin{equation}\label{eq:cost_integral_TDS_without_lambda}
J_0:=\frac{1}{2\pi}\int_{-\infty}^{\infty}\frac{d\omega}{\vert i\omega-\gamma\exp^{-i\omega\tau}\vert^2}
=-\frac{\cos(|\gamma|\tau)}{2\big(\Re(\gamma)+|\gamma|\sin(|\gamma|\tau)\big)}.
\end{equation}
\qed
\end{cor}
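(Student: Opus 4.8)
The plan is to obtain the identity as a direct specialization of Lemma~\ref{lem:cost_integral_in_TDS} to the case $\lambda=0$. First I would record that $\lambda=0$ forces $a=\re\lambda=0$ and $b=\im\lambda=0$, so the hypotheses of the lemma reduce to $a=0\le\frac{1}{\tau}$ and $\gamma\exp^{-ib\tau}=\gamma\in\Lambda_{\tau,0}$, which is exactly the standing assumption of the corollary. In particular the integrand $\abs{i\omega-\gamma\exp^{-i\omega\tau}}^{-2}$ defining $J_0$ is precisely the integrand defining $J$ in \eqref{eq:cost_integral_TDS_full_def} with $\lambda=0$, so $J_0=J$ and the lemma applies verbatim.

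Next I would determine which of the three branches of \eqref{eq:cost_integral_TDS_full_def} is in force. The description of $\Lambda_{\tau,0}$ in \eqref{eq:lambda_tau_a_zero} excludes the origin, so $\gamma\neq0$ and hence $\abs{\gamma}>0=\abs{a}$. Thus we are in the third case and $J_0=J_\gamma$, with $J_\gamma$ given by \eqref{eq:cost_integral_TDS_Jg}.

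Finally I would substitute $a=0$ and $b=0$ into \eqref{eq:cost_integral_TDS_Jg} and simplify. With $a=0$ we have $\sqrt{\abs{\gamma}^2-a^2}=\abs{\gamma}$, and with $b=0$ we have $\exp^{-ib\tau}=1$, so $\re(\gamma\exp^{-ib\tau})=\re\gamma$. The numerator of the fraction collapses to $-\abs{\gamma}\cos(\abs{\gamma}\tau)$ and its denominator to $\re\gamma+\abs{\gamma}\sin(\abs{\gamma}\tau)$, while the prefactor becomes $\frac{1}{2\abs{\gamma}}$; the factor $\abs{\gamma}$ then cancels, yielding exactly \eqref{eq:cost_integral_TDS_without_lambda}.

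Since every step is a substitution, there is no genuine obstacle; the only point requiring a moment's care is checking that the hypotheses of Lemma~\ref{lem:cost_integral_in_TDS} are indeed met, and in particular that $\gamma\neq0$, so that the $J_\gamma$ branch applies and the prefactor $\frac{1}{2\abs{\gamma}}$ is well defined. This follows at once from the exclusion of the origin in the definition of $\Lambda_{\tau,0}$.
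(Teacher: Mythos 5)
Your proposal is correct and matches the paper's own route: the corollary is stated there with ``We easily obtain'' precisely because it is the specialization $\lambda=0$ (hence $a=b=0$) of Lemma~\ref{lem:cost_integral_in_TDS}, where $0\notin\Lambda_{\tau,0}$ forces $|\gamma|>|a|=0$ and thus the $J_\gamma$ branch, exactly as you argue. Your substitution and simplification of \eqref{eq:cost_integral_TDS_Jg} are accurate, and your explicit check that $\gamma\neq0$ (so the prefactor $\frac{1}{2|\gamma|}$ is well defined) is the one detail the paper leaves implicit.
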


Referring to \eqref{eq:forcing_operator_infty_def} and the mild solution of the $k$-th component \eqref{eq:k-th_component_abstract_Cauchy_problem_mild_solution} the infinite-time forcing operator $\Phi_{\infty}\in\mathcal{L}(L^2(0,\infty;\CC),\mathcal{X}_{-1})$ is given by
\begin{equation}\label{eq:forcing_operator_infty_for_abstract_delay_system}
\Phi_{\infty}(u):=\int_{0}^{\infty}\mathcal{T}(t)\mathcal{B}u(t) \, dt,
\end{equation}
where
\[
\mathcal{T}(t)\mathcal{B}=\left(\begin{array}{cc} 
								\mathcal{T}_{11}(t) & \mathcal{T}_{12}(t) \\ 
								\mathcal{T}_{21}(t) & \mathcal{T}_{22}(t)	
						\end{array}\right)
						\left(\begin{array}{c} 
								b \\ 
								0	
						\end{array}\right)=
						\left(\begin{array}{c} 
								b\,\mathcal{T}_{11}(t)\\ 
								b\,\mathcal{T}_{21}(t)	
						\end{array}\right).
\]
Hence the forcing operator \eqref{eq:forcing_operator_infty_for_abstract_delay_system} becomes
\begin{equation}\label{eq:k-th_component_forcing_operator}
\Phi_{\infty}(u)=\left(\begin{array}{c} 
								\int_{0}^{\infty}\mathcal{T}_{11}(t)bu(t) \, dt\\ 
								\\
								\int_{0}^{\infty}\mathcal{T}_{21}(t)bu(t) \, dt	
						\end{array}\right)\in\mathcal{X}_{-1}=D(\calA^*)'.
\end{equation}
We can represent formally a similar product with the resolvent $R(s,\mathcal{A})$ from \eqref{eq:k-th_component_resolvent_of_abstract_A}, namely
\begin{equation}\label{eq:k-th_component_resolvent_times_B}
R(s,\mathcal{A})\mathcal{B}=\left(\begin{array}{cc} 
									R_{11}(s) & R_{12}(s) \\ 
									R_{21}(s) & R_{22}(s)	
						\end{array}\right)
						\left(\begin{array}{c} 
								b \\ 
								0	
						\end{array}\right)=
						\frac{b}{s-\lambda -\gamma\exp^{-s\tau}}
						\left(\begin{array}{c} 
								1\\ 
								\epsilon_s	
						\end{array}\right),					
\end{equation}
where the correspondence of sub-indices with elements of \eqref{eq:k-th_component_resolvent_of_abstract_A} is the obvious one and will be used from now on to shorten the notation. 

The connection between the $C_0$-semigroup $\mathcal{T}(t)$ and the resolvent $R(s,\mathcal{A})$ is given by the Laplace transform, whenever the integral converges, and 
\begin{equation}\label{eq:k-th_component_resolvent_as_Laplace_transform}
R(s,\mathcal{A})\mathcal{B}=\int_{0}^{\infty}\exp^{-sr}\mathcal{T}(r)\mathcal{B} \, dr=b\left(\begin{array}{c} 
								\mathcal{L}(\mathcal{T}_{11})(s)\\ 
								\\
								\mathcal{L}(\mathcal{T}_{21})(s)	
						\end{array}\right)\in\mathcal{L}(\CC,\mathcal{X}_{-1}).
\end{equation}

\begin{thm}\label{thm k-th component infty admissiblity}
Suppose that for a given delay $\tau>0$ there is  $\lambda=a+i\beta\in\underleftarrow{\CC}_{\frac{1}{\tau}}$ and  $\gamma\exp^{-i\beta\tau}\in \Lambda_{\tau,a}$. Then the control operator $\mathcal{B}=\binom{b}{0}$ for the system \eqref{eq:k-th_component_abstract_Cauchy_problem} is infinite-time admissible for every $u\in L^2(0,\infty;\CC)$ and 
\[
\norm{\Phi_{\infty}(u)}_{\mathcal{X}}^2\leq(1+\tau)\abs{b}^{2}J\norm{u}_{L^2(0,\infty;\CC)}^{2},
\]
where $J$ is given by \eqref{eq:cost_integral_TDS_full_def}.
\end{thm}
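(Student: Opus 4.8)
The plan is to split $\Phi_\infty(u)$ into its two components according to \eqref{eq:k-th_component_forcing_operator}, estimate each one by a Cauchy--Schwarz inequality in which the relevant $L^2(0,\infty;\CC)$-norm is computed by Paley--Wiener (Theorem~\ref{thm:Paley-Wiener}), and then add the contributions. Using the inner product \eqref{eq:k-th_component_inner_product_Cartesian_product} the quantity to be bounded is
\[
\norm{\Phi_\infty(u)}_{\mathcal{X}}^2=\Big|\int_0^\infty \mathcal{T}_{11}(t)b\,u(t)\,dt\Big|^2+\int_{-\tau}^0\Big|\int_0^\infty [\mathcal{T}_{21}(t)b](\sigma)\,u(t)\,dt\Big|^2\,d\sigma,
\]
so the whole estimate reduces to controlling the scalar kernels $\mathcal{T}_{11}(\cdot)$ and, for each fixed $\sigma\in[-\tau,0]$, $\mathcal{T}_{21}(\cdot)(\sigma)$ in $L^2(0,\infty;\CC)$.

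The key preliminary step is to identify these $L^2$-norms with $J$. By \eqref{eq:k-th_component_resolvent_times_B}--\eqref{eq:k-th_component_resolvent_as_Laplace_transform} the Laplace transforms are $\mathcal{L}(\mathcal{T}_{11})(s)=\tfrac{1}{s-\lambda-\gamma\exp^{-s\tau}}$ and $\mathcal{L}(\mathcal{T}_{21})(s)(\sigma)=\tfrac{\exp^{s\sigma}}{s-\lambda-\gamma\exp^{-s\tau}}$. The hypothesis $\lambda=a+i\beta\in\underleftarrow{\CC}_{\frac1\tau}$ together with $\gamma\exp^{-i\beta\tau}\in\Lambda_{\tau,a}$ guarantees, via Proposition~\ref{cor:stability_of_complex_polynomial}, that $s-\lambda-\gamma\exp^{-s\tau}$ has no zeros in $\CC_+$ nor on $i\RR$, so both transforms are analytic on $\CC_+$ and extend analytically across the imaginary axis; combined with the finiteness of the boundary integral provided by Lemma~\ref{lem:cost_integral_in_TDS} this will place them in $H^2(\CC_+)$. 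Paley--Wiener then yields
\[
\norm{\mathcal{T}_{11}}_{L^2(0,\infty)}^2=\frac{1}{2\pi}\int_{-\infty}^\infty\frac{d\omega}{\abs{i\omega-\lambda-\gamma\exp^{-i\omega\tau}}^2}=J,
\]
and, because $\abs{\exp^{i\omega\sigma}}=1$ on the imaginary axis, the identical computation gives $\norm{\mathcal{T}_{21}(\cdot)(\sigma)}_{L^2(0,\infty)}^2=J$ for every $\sigma\in[-\tau,0]$, independently of $\sigma$.

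With these identities the two bounds are immediate. For the first component, Cauchy--Schwarz gives $\big|\int_0^\infty \mathcal{T}_{11}(t)b\,u(t)\,dt\big|^2\le \abs{b}^2\norm{\mathcal{T}_{11}}_{L^2}^2\norm{u}_{L^2}^2=\abs{b}^2 J\norm{u}_{L^2}^2$. For the second, applying Cauchy--Schwarz in $t$ for each fixed $\sigma$ and then integrating over $\sigma$ (the interchange being justified by Tonelli's theorem) yields
\[
\int_{-\tau}^0\Big|\int_0^\infty [\mathcal{T}_{21}(t)b](\sigma)\,u(t)\,dt\Big|^2 d\sigma\le \abs{b}^2\norm{u}_{L^2}^2\int_{-\tau}^0 \norm{\mathcal{T}_{21}(\cdot)(\sigma)}_{L^2}^2\,d\sigma=\tau\abs{b}^2 J\norm{u}_{L^2}^2.
\]
Adding the two estimates gives $\norm{\Phi_\infty(u)}_{\mathcal{X}}^2\le (1+\tau)\abs{b}^2 J\norm{u}_{L^2}^2$, which is the asserted inequality and in particular shows infinite-time admissibility of $\mathcal{B}$.

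The step I expect to be the main obstacle is the rigorous verification of genuine $H^2(\CC_+)$-membership of the two resolvent kernels, namely controlling $\sup_{\alpha>0}\int_{-\infty}^\infty\abs{\,\cdot\,(\alpha+i\omega)}^2\,d\omega$ rather than only the boundary integral at $\alpha=0$. This is where the \emph{retarded} (as opposed to neutral) character must be used: the characteristic roots of $s-\lambda-\gamma\exp^{-s\tau}$ have real parts bounded away from $i\RR$ and the kernel decays like $1/\abs{s}$, so analyticity in a strictly larger half-plane plus monotonicity of the vertical $L^2$-means in $\alpha$ upgrade the finite boundary integral to true Hardy-space membership. For the second kernel this membership in fact follows from that of the first, since $\abs{\exp^{s\sigma}}\le 1$ on $\CC_+$ when $\sigma\le 0$; once this point is settled, Paley--Wiener and Lemma~\ref{lem:cost_integral_in_TDS} complete the argument mechanically.
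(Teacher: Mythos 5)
Your proof is correct and follows the paper's strategy in all essentials: split $\Phi_\infty(u)$ into its two components, identify the kernels with the resolvent entries $R_{11}(s)=\frac{1}{s-\lambda-\gamma e^{-s\tau}}$ and $R_{21}=\epsilon_s R_{11}$, use Proposition~\ref{cor:stability_of_complex_polynomial} for analyticity in $\CC_+$ and Lemma~\ref{lem:cost_integral_in_TDS} for the boundary integral, then conclude via Paley--Wiener and Cauchy--Schwarz with constants $\abs{b}^2J$ and $\tau\abs{b}^2J$. Two technical routings differ, both in your favour. First, for the second component the paper invokes the vector-valued Paley--Wiener theorem in $L^2(0,\infty;W)$, introducing auxiliary functions $\phi(t)=\inner{\mathcal{T}_{21}(t),x_0}_W$ to identify the transform and estimating $\norm{R_{21}^*(i\omega)}_{L^2(-\tau,0;\CC)}^2=\tau\,\abs{i\omega-\lambda-\gamma e^{-i\omega\tau}}^{-2}$ before applying Cauchy--Schwarz in $\omega$; you instead fix $\sigma\in[-\tau,0]$, apply the scalar theorem to $t\mapsto[\mathcal{T}_{21}(t)](\sigma)$ (using $\abs{e^{i\omega\sigma}}=1$) and integrate in $\sigma$ by Tonelli. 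This is more elementary and yields the same factor $\tau\abs{b}^2J$; the joint measurability needed for Tonelli is harmless since $\mathcal{T}_{21}(t)=z_t$ for the continuous fundamental solution $z$. (Applying Cauchy--Schwarz in the time domain rather than on the boundary is immaterial, as Paley--Wiener is isometric.) Second, you correctly flag that the paper's inference ``analytic in $\CC_+$ plus $L^2$ boundary trace implies $H^2(\CC_+)$'' is not valid in general (consider $e^{s}/(1+s)^2$), and your repair is sound: for a retarded characteristic function only finitely many roots lie in any vertical strip, so all roots satisfy $\Re s\leq-\delta<0$, the kernel is analytic on $\Re s>-\delta$ and $O(1/\abs{s})$ on the closed right half-plane, and the supremum in \eqref{eqn defn Hardy space element} is finite by a direct uniform bound. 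One caution: the ``monotonicity of vertical $L^2$-means'' you mention presupposes $H^2$-membership of the larger half-plane, so the direct bound is the cleaner way to close this step; with that phrasing adjusted, your treatment of this point is actually more careful than the published one.
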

\begin{proof}
\begin{itemize}
\item[1.] Let the standard inner product on $L^2(0,\infty;\CC)$ be given by $\inner{f,g}_{L^2(0,\infty;\CC)}=\int_{0}^{\infty}f(t)\bar{g}(t) \, dt$ for every $f,g\in L^2(0,\infty;\CC)$.
Using \eqref{eq:k-th_component_forcing_operator} and \eqref{eq:k-th_component_extended_space_def} we may write for the first component of $\Phi_\infty(u)$
\begin{equation}\label{eq:forcing_operator_1st_component_as_inner_product}
\int_{0}^{\infty}\mathcal{T}_{11}(t)bu(t) \, dt=b\inner{\mathcal{T}_{11},\bar{u}}_{L^2(0,\infty;\CC)}
\end{equation}
assuming that $\mathcal{T}_{11}\in L^2(0,\infty;\CC)$. This assumption is equivalent, due to Theorem \ref{thm:Paley-Wiener}, to $\mathcal{L}(\mathcal{T}_{11})\in H^2(\CC_+)$, where the last inclusion holds. Indeed, using \eqref{eq:k-th_component_resolvent_times_B} and \eqref{eq:k-th_component_resolvent_as_Laplace_transform} we see that $\mathcal{L}(\mathcal{T}_{11})(s)=bR_{11}(s)=\frac{b}{s-\lambda-\gamma \exp^{-s\tau}}$. The assumptions on $\lambda$ and $\gamma$ give that $R_{11}$ is analytic in $\CC_+$. The boundary trace $R_{11}^*=\mathcal{L}(\mathcal{T}_{11})^*$ is given a.e. as
\[
\mathcal{L}(\mathcal{T}_{11})^*(i\omega)=\frac{1}{i\omega-\lambda -\gamma\exp^{-i\omega\tau}}.
\]
Lemma~\ref{lem:cost_integral_in_TDS} now gives that $\mathcal{L}(\mathcal{T}_{11})^*\in L^2(i\RR)$ and thus, by \eqref{eq:inner_product_on_H2(C+)}, $R_{11}\in H^2(\CC_+)$.
\item[2.] Again by Theorem \ref{thm:Paley-Wiener} and definition of the inner product on $H^2(\CC)_+$ in \eqref{eq:inner_product_on_H2(C+)} we have
\begin{align*}
b\inner{\mathcal{T}_{11},\bar{u}}_{L^2(0,\infty;\CC)}
=\frac{b}{2\pi}\int_{-\infty}^{+\infty}\frac{1}{i\omega-\lambda -\gamma\exp^{-i\omega\tau}}\overbar{\mathcal{L}(\bar{u})}^*(i\omega) \, d\omega.
\end{align*}
The Cauchy--Schwarz inequality now gives 
\begin{align*}
&\abs{b}\bigg\vert\frac{1}{2\pi}\int_{-\infty}^{+\infty}\frac{1}{i\omega-\lambda -\gamma \exp^{-i\omega\tau}}\overbar{\mathcal{L}(\bar{u})}^*(i\omega) \, d\omega\bigg\vert\\
&\leq\abs{b}\bigg(\frac{1}{2\pi}\int_{-\infty}^{+\infty} \Big\vert\frac{1}{i\omega-\lambda -\gamma\exp^{-i\omega\tau}}\Big\vert^{2} \, d\omega\bigg)^{\frac{1}{2}} \bigg(\frac{1}{2\pi}\int_{-\infty}^{+\infty} \big\vert \overbar{\mathcal{L}(\bar{u})}^*(i\omega)\big\vert^2 \, d\omega\bigg)^{\frac{1}{2}}\\
&=\abs{b}\, J^{\frac{1}{2}}\, \norm{u}_{L^2(0,\infty;\CC)},
\end{align*}
with $J$ given by \eqref{eq:cost_integral_TDS_full_def}. Combining this result with point 1 we obtain
\begin{equation}\label{eq:forcing_operator_1st_component_norm}
\bigg\vert\int_{0}^{\infty}\mathcal{T}_{11}(t)bu(t) \, dt\bigg\vert^2\leq\abs{b}^2 J \norm{u}_{L^2(0,\infty;\CC)}^2.
\end{equation}
\item[3.] Consider now the second element of the forcing operator \eqref{eq:k-th_component_forcing_operator}, namely
\[
\int_{0}^{\infty}\mathcal{T}_{21}(t)bu(t) \, dt\in W,
\]
where we denote by $W$ the second component of $\calX_{-1}=D(\calA^*)'$. If we assume that $\mathcal{T}_{21}\in  L^2(0,\infty;W)$ then using the vector-valued version of Theorem \ref{thm:Paley-Wiener} this is equivalent to $\mathcal{L}(\mathcal{T}_{21})\in H^2(\CC_+,W)$, but the last inclusion holds. Indeed, to show it notice that $R_{21}=\epsilon_s R_{11}$ where
\[
\epsilon_s(\sigma):=\exp^{s\sigma},\quad \sigma\in[-\tau,0],
\]
is, as a function of $s$, analytic everywhere for every value of  $\sigma$, and follow exactly the reasoning in point 1.
\item[4.] We introduce an auxiliary function $\phi:[0,\infty)\rightarrow\CC$. For that purpose fix $\mathcal{T}_{21}\in L^2(0,\infty;W)$ and $x_0\in W$ and define $\phi(t):=\inner{\mathcal{T}_{21}(t),x_0}_W$. Then $\phi\in L^2(0,\infty;\CC)$, as the Cauchy--Schwarz inequality gives 
\[
\int_{0}^{\infty}\abs{\inner{\mathcal{T}_{21}(t),x_0}_W}^{2} \, dt\leq\int_{0}^{\infty}\norm{\mathcal{T}_{21}(t)}_{W}^{2} \, dt\norm{x_0}_{W}^{2}<\infty.
\]

\item[5.] Consider now the following:
\[
b\int_{0}^{\infty}\phi(t)u(t) \, dt =b\int_{0}^{\infty}\inner{\mathcal{T}_{21}(t),x_0}_{W}u(t)dt 
 =b\bigg\langle\int_{0}^{\infty}\mathcal{T}_{21}(t)u(t) \, dt,x_0\bigg\rangle_{W}.
\]
We also have 
\begin{align*}
b\int_{0}^{\infty}\phi(t)u(t) \, dt=b\inner{\phi,\bar{u}}_{L^2(0,\infty;\CC)}=b\inner{\mathcal{L}(\phi)^*,\mathcal{L}(\bar{u})^*}_{L^2(i\RR)}.
\end{align*}
To obtain the boundary trace $\mathcal{L}(\phi)^*$ notice that
\begin{align*}
\mathcal{L}(\phi)(s)&=\int_{0}^{\infty}\exp^{-sr}\inner{\mathcal{T}_{21}(r),x_0}_{W} \, dr=\bigg\langle\int_{0}^{\infty}\exp^{-sr}\mathcal{T}_{21}(r) \, dr,x_0\bigg\rangle_{W}\\
&=\inner{\mathcal{L}(\mathcal{T}_{21})(s),x_0}_{W}=\inner{R_{21}(s),x_0}_{W}.
\end{align*}
Using now \eqref{eq:k-th_component_resolvent_times_B} yields the result
\[
\mathcal{L}(\phi)^*(i\omega)=\inner{R_{21}^*(i\omega),x_0}_{W}=\bigg\langle\frac{\epsilon_{i\omega}}{i\omega-\lambda-\gamma\exp^{-i\omega\tau}},x_0\bigg\rangle_{W}.
\]
Finally 
we obtain
\[
\bigg\langle\int_{0}^{\infty}\mathcal{T}_{21}(t)u(t) \, dt,x_0\bigg\rangle_{W}=\bigg\langle\frac{1}{2\pi}\int_{-\infty}^{+\infty}R_{21}^*(i\omega)\overbar{\mathcal{L}(\bar{u})}^*(i\omega) \, d\omega,x_0\bigg\rangle_{W}
\]
and 
\begin{equation}\label{eq:forcing_operator_2nd_component}
\int_{0}^{\infty}\mathcal{T}_{21}(t)u(t) \, dt
=\frac{1}{2\pi}\int_{-\infty}^{+\infty}R_{21}^*(i\omega)\overbar{\mathcal{L}(\bar{u})}^*(i\omega) \, d\omega\in W.
\end{equation}
\item [6.] By the definition of the norm on $L^2(-\tau,0;\CC)$ we have
\begin{align*}
\norm{R_{21}^*(i\omega)}_{L^2(-\tau,0;\CC)}^{2}&=\int_{-\tau}^{0}\bigg\vert \frac{\exp^{i\omega t}}{i\omega-\lambda -\gamma\exp^{-i\omega\tau}}\bigg\vert^{2} \, dt
= \frac{1}{\vert i\omega-\lambda -\gamma\exp^{-i\omega\tau}\vert^{2}}\int_{-\tau}^{0}\big\vert\exp^{i\omega t}\big\vert^{2} \, dt\\
&=\frac{\tau}{\vert i\omega-\lambda -\gamma\exp^{-i\omega\tau}\vert^{2}}.
\end{align*}

The Cauchy--Schwarz inequality gives
\begin{align*}
&\abs{b}\bigg\| \frac{1}{2\pi}\int_{-\infty}^{+\infty}R_{21}^*(i\omega)\overbar{\mathcal{L}(\bar{u})}^*(i\omega)
\, d\omega \bigg\|_{L^2(-\tau,0;\CC)}\\
&\leq\abs{b} \frac{1}{2\pi}\int_{-\infty}^{+\infty}\norm{R_{21}^*(i\omega)}_{L^2(-\tau,0;\CC)}\abs{\overbar{\mathcal{L}(\bar{u})}^*(i\omega)} \, d\omega\\
&=\abs{b} \frac{1}{2\pi}\int_{-\infty}^{+\infty}\frac{\tau^{\frac{1}{2}}}{\vert i\omega-\lambda -\gamma\exp^{-i\omega\tau}\vert}\abs{\overbar{\mathcal{L}(\bar{u})}^*(i\omega)} \, d\omega\\
&\leq\abs{b}\bigg(\frac{1}{2\pi}\int_{-\infty}^{+\infty} \Big(\frac{\tau^{\frac{1}{2}}}{\abs{i\omega-\lambda -\gamma\exp^{-i\omega\tau}}}\Big)^{2} \, d\omega\bigg)^{\frac{1}{2}} \bigg(\frac{1}{2\pi}\int_{-\infty}^{+\infty} \big\vert \overbar{\mathcal{L}(\bar{u})}^*(i\omega)\big\vert^2 \, d\omega\bigg)^{\frac{1}{2}}\\
&=\abs{b}\left(\tau J\right)^{\frac{1}{2}} \norm{u}_{L^2(0,\infty;\CC)},
\end{align*}
with $J$ given by \eqref{eq:cost_integral_TDS_full_def}. Combining this result with point 5 gives 
\begin{equation}\label{eq:forcing_operator_2nd_component_norm}
\bigg\|\int_{0}^{\infty}\mathcal{T}_{21}(t)bu(t) \, dt\bigg\|_{L^2(-\tau,0;\CC)}^2
\leq\abs{b}^2\tau J \norm{u}_{L^2(0,\infty;\CC)}^2.
\end{equation}

\item[7.] Taking now  the norm $\norm{\cdot}_{\mathcal{X}}$ resulting from \eqref{eq:k-th_component_inner_product_Cartesian_product} and using \eqref{eq:k-th_component_forcing_operator}, \eqref{eq:forcing_operator_1st_component_norm}, \eqref{eq:forcing_operator_2nd_component_norm} and Lemma \ref{lem:cost_integral_in_TDS} we arrive at
\begin{align}
\norm{\Phi_{\infty}(u)}_{\mathcal{X}}^{2}&=\bigg\vert\int_{0}^{\infty}\mathcal{T}_{11}(t)bu(t) \, dt\bigg\vert^2+\bigg\|\int_{0}^{\infty}\mathcal{T}_{21}(t)bu(t) \, dt\bigg\|_{L^2(-\tau,0;\CC)}^{2}\nonumber\\
&=(1+\tau)\abs{b}^{2}  J \norm{u}_{L^2(0,\infty;\CC)}^{2}.\label{eq:forcing_operator_bound}
\end{align}
\end{itemize}
\end{proof}
%
\subsection{Analysis of the whole retarded delay system}\label{subsection:Analysis of the whole retarded delay system}

Let us return to the diagonal system \eqref{eq:retarded_non-autonomous_system} reformulated as \eqref{eq:non-autonomous_abstract_Cauchy_problem_def} with the extended state space $\mathcal{X}=l^2\times L^2(-\tau,0;l^2)$ and the control operator  $\mathcal{B}\in\mathcal{L}(\CC,\mathcal{X}_{-1})$. We also return to denoting the $k$-th component of the extended state space with the subscript. By Proposition~\ref{prop:wellposedness_of_k-th_component_ACP} a mild solution of \eqref{eq:k-th_component_of_diagonal_non-autonomous_system}  is given by \eqref{eq:k-th_component_abstract_Cauchy_problem_mild_solution}, that is  $v_{k}:[0,\infty)\rightarrow\mathcal{X}$,
\begin{equation}\label{eq:k-th_component_abstract_Cauchy_problem_mild_solution_with_indices}
v_{k}(t)=\binom{z_{k}(t)}{z_{t_k}}=\mathcal{T}_{k}(t)v_{k}(0)+\int_{0}^{t}\mathcal{T}_{k}(t-s)\mathcal{B}_{k}u(s) \, ds.
\end{equation}
Given the structure of the Hilbert space $\mathcal{X}=l^2\times L^2(-\tau,0;l^2)$ in \eqref{eq:inner_product_on_Cartesian_product_def} the mild solution \eqref{eq:k-th_component_abstract_Cauchy_problem_mild_solution_with_indices} has values in the subspace of $\mathcal{X}$ spanned by the $k$-th element of its basis. Hence, defining $v:[0,\infty)\rightarrow\mathcal{X}$,
\begin{equation}\label{eq:abstract_Cauchy_problem_mild_solution_with_indices}
v(t):=\sum_{k\in\NN}v_{k}(t),
\end{equation}
we obtain the unique mild solution of \eqref{eq:non-autonomous_abstract_Cauchy_problem_def}. Using \eqref{eq:abstract_Cauchy_problem_mild_solution_with_indices} and \eqref{eq:inner_product_on_Cartesian_product_def} we have 
\begin{equation}\label{eq:abstract_Cauchy_problem_mild_solution_norm_in_X}
\begin{split}
\norm{v(t)}_{\mathcal{X}}^2&=\bigg\|\binom{z(t)}{z_{t}}\bigg\|_{\mathcal{X}}^2=\norm{z(t)}_{l^2}^2+\norm{z_t}_{L^2(-\tau,0;l^2)}^2\\
			   &=\sum_{k\in\NN}\abs{z_k(t)}^2+\sum_{k\in\NN}\abs{\inner{z_t,l_k}_{L^2(-\tau,0;l^2)}}^2\\
			   &=\sum_{k\in\NN}\bigg(\abs{z_k(t)}^2+\norm{z_{t_k}}_{L^2(-\tau,0;\CC)}^2\bigg)\\
			   &=\sum_{k\in\NN}\norm{v_k(t)}_{\mathcal{X}}^2,
\end{split}
\end{equation}
where we used again \eqref{eq:k-th_component_Cartesian_product_def} and notation from \eqref{eq:k-th_component_of_diagonal_non-autonomous_system}. We can formally write the mild solution \eqref{eq:abstract_Cauchy_problem_mild_solution_with_indices} as a function $v:[0,\infty)\rightarrow\mathcal{X}_{-1}$,

\begin{equation}\label{eq:whole_abstract_Cauchy_problem_mild_solution_formal}
v(t)=\mathcal{T}(t)v(0)+\int_{0}^{t}\mathcal{T}(t-s)\mathcal{B}u(s) \, ds.
\end{equation}
where the control operator $\mathcal{B}\in\mathcal{L}(\CC,\mathcal{X}_{-1})$ is given by  $\mathcal{B}=\binom{(b_k)_{k\in\NN}}{0}$.
We may now state the main theorem of this subsection.
\begin{thm}\label{thm:whole_system_infty_admissibility}
 Let for the given delay $\tau\in(0,\infty)$ sequences $(\lambda_k)_{k\in\NN}$ and $(\gamma_k)_{k\in\NN}$ be such that 
 \[
 \lambda_k=a_k+i\beta_k\in\underleftarrow{\CC}_{\frac{1}{\tau}}\quad\hbox{and}\quad  \gamma_k\exp^{-i\beta_k\tau}\in\Lambda_{\tau,a_k}\quad\forall k\in\NN,
 \]
with $\Lambda_{\tau,a_k}$ defined in \eqref{eq:lambda_tau_a_neg}-- \eqref{eq:lambda_tau_a_pos}. Then the control operator $\mathcal{B}\in\mathcal{L}(\CC,\mathcal{X}_{-1})$ given by $\mathcal{B}=\binom{(b_k)_{k\in\NN}}{0}$ is infinite-time admissible for system \eqref{eq:non-autonomous_abstract_Cauchy_problem_def} if the sequence $(C_k)_{k\in\NN}\in l^1$, where
\begin{equation}\label{eq:whole_sys_infty_admissibility_ceofficient}
C_k:=\abs{b_k}^2 J_{k}
\end{equation}
and $J_{k}$ is given by \eqref{eq:cost_integral_TDS_full_def} for every $(\lambda_k,\gamma_k)$, $k\in\NN$.
\end{thm}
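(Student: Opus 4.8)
The plan is to reduce the admissibility of the whole system to the single-component estimate already established in Theorem \ref{thm k-th component infty admissiblity}, exploiting the orthogonal decomposition of $\mathcal{X}$ recorded in \eqref{eq:abstract_Cauchy_problem_mild_solution_norm_in_X}. First I would observe that the hypotheses $\lambda_k\in\underleftarrow{\CC}_{1/\tau}$ and $\gamma_k\exp^{-i\beta_k\tau}\in\Lambda_{\tau,a_k}$ are exactly what Theorem \ref{thm k-th component infty admissiblity} requires for each index $k$, so that the $k$-th component of the forcing operator obeys
\[
\norm{\Phi_{\infty,k}(u)}_{\mathcal{X}}^2\leq(1+\tau)\abs{b_k}^2 J_k\norm{u}_{L^2(0,\infty;\CC)}^2=(1+\tau)C_k\norm{u}_{L^2(0,\infty;\CC)}^2.
\]
Since by \eqref{eq:abstract_Cauchy_problem_mild_solution_with_indices} the full forcing operator decomposes as $\Phi_\infty(u)=\sum_{k\in\NN}\Phi_{\infty,k}(u)$ with the summands lying in mutually orthogonal subspaces of $\mathcal{X}$, the identity \eqref{eq:abstract_Cauchy_problem_mild_solution_norm_in_X} gives $\norm{\Phi_\infty(u)}_{\mathcal{X}}^2=\sum_{k\in\NN}\norm{\Phi_{\infty,k}(u)}_{\mathcal{X}}^2$.

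Summing the component bounds then yields
\[
\norm{\Phi_\infty(u)}_{\mathcal{X}}^2\leq(1+\tau)\Big(\sum_{k\in\NN}C_k\Big)\norm{u}_{L^2(0,\infty;\CC)}^2,
\]
and the assumption $(C_k)_{k\in\NN}\in l^1$ makes the constant $K:=\big((1+\tau)\sum_k C_k\big)^{1/2}$ finite. This is precisely the boundedness of $\Phi_\infty$ as an operator from $L^2(0,\infty;\CC)$ into $\mathcal{X}$, which by the discussion following \eqref{eq:forcing_operator_infty_def} is equivalent to the infinite-time admissibility of $\mathcal{B}$.

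The one point requiring genuine care, rather than routine bookkeeping, is the interchange of the sum over $k$ with the infinite-time integral that defines $\Phi_\infty$: a priori each $\Phi_{\infty,k}(u)$ lives in the extrapolation space $\mathcal{X}_{-1}$, and I must verify that the series $\sum_k\Phi_{\infty,k}(u)$ converges in the genuine state space $\mathcal{X}$ (not merely in $\mathcal{X}_{-1}$) and equals $\Phi_\infty(u)$. The $l^1$ summability of $(C_k)$ is exactly what forces the partial sums to be Cauchy in $\mathcal{X}$, so the limit lies in $\mathcal{X}$ and the Parseval-type identity \eqref{eq:abstract_Cauchy_problem_mild_solution_norm_in_X} applies; I would make this convergence explicit before summing the estimates, since it is what upgrades the formal component-wise computation to a statement about the full system. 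The rest is the elementary manipulation of orthogonal sums sketched above.
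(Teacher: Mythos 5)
Your proposal is correct and follows essentially the same route as the paper's own proof: decompose $\Phi_\infty(u)=\sum_{k\in\NN}\Phi_{\infty_k}(u)$ into orthogonal components, apply the single-component bound of Theorem~\ref{thm k-th component infty admissiblity} to each $k$, and sum using the Parseval-type identity \eqref{eq:abstract_Cauchy_problem_mild_solution_norm_in_X} together with $(C_k)_{k\in\NN}\in l^1$. Your explicit remark that the $l^1$ condition is what makes the partial sums Cauchy in $\mathcal{X}$ (rather than merely in $\mathcal{X}_{-1}$) is a point the paper leaves implicit, so spelling it out is a small improvement rather than a deviation.
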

\begin{proof}
Define the infinitie-time forcing operator for \eqref{eq:whole_abstract_Cauchy_problem_mild_solution_formal} as $\Phi_\infty:L^2(0,\infty)\rightarrow\mathcal{X}_{-1}$,
\[
 \Phi_{\infty}(u):=\int_{0}^{\infty}\mathcal{T}(t)\mathcal{B}u(t) \, dt.
\]
From \eqref{eq:abstract_Cauchy_problem_mild_solution_with_indices} it can be represented as
\begin{equation}\label{eq:forcing_operator_coordinate_decomposition}
 \Phi_{\infty}(u)=\sum_{k\in\NN}\Phi_{\infty_k}(u),
\end{equation}
where $\Phi_{\infty_k}(u)$ is given by 
\[
\Phi_{\infty_k}(u):=\int_{0}^{\infty}\mathcal{T}_{k}(t)\mathcal{B}_{k}u(t) \, dt,\quad k\in\NN.
\]
Then, similarly as in \eqref{eq:abstract_Cauchy_problem_mild_solution_norm_in_X} and using the assumption we see that 
\[
 \norm{\Phi_{\infty}(u)}_{\mathcal{X}}^2=\sum_{k\in\NN}\norm{\Phi_{\infty_k}(u)}_{\mathcal{X}}^2\leq(1+\tau)\bigg(\sum_{k\in\NN}\abs{C_k}\bigg)\norm{u}_{L^2(0,\infty;\CC)}^2<\infty.
\]
\end{proof}

Condition $(C_k)_{k\in\NN}\in l^1$ of Theorem~\ref{thm:whole_system_infty_admissibility} may not be easy to verify given the form of $J_k$ in \eqref{eq:cost_integral_TDS_full_def}. However, in certain situations the required condition follows from relatively simple relations between generator eigenvalues and a control sequence - see Example~\ref{example:unbounded_real_part_eigenvalues} below. 

The $l^1$-convergence condition was also used in \cite{Partington_Zawiski_2019}, where the results are, in fact, a special case of the present reasoning. This can be seen in Sections~\ref{subsec:Direct state-delayed diagonal systems} and \ref{example:bouneded_real_eigenvalues} below.

\section{Examples}\label{sec:4}

A motivating example of a dynamical system is the heat equation with delay \cite{Khusainov_Pokojovy_Azizbayov_2013}, \cite{Khodja_et_al_2014} (or a diffusion model with a delay in the reaction term \cite[Section 2.1]{Wu_1996}). Consider a homogeneous rod with zero temperature imposed on its both ends and its temperature change described by the following model
\begin{equation}\label{eq:heat_in_rod_by_PDEs}
\left\{\begin{array}{ll}
        \frac{\partial w}{\partial t}(x,t)=\frac{\partial^2 w}{\partial x^2}(x,t)+g(w(x,t-\tau)),& x\in(0,\pi),t\geq0,\\
        w(0,t)=0,\ w(\pi,t)=0,& t\in[0,\infty),\\
        w(x,0)=w_0(x), & x\in(0,\pi),\\
        w(x,t)=\varphi(x,t) & x\in(0,\pi),t\in[-\tau,0],
        \end{array}
\right.
\end{equation}
where the temperature profile $w(\cdot,t)$ belongs to the state space $X=L^2(0,\pi)$, initial condition is formed by the initial temperature distribution $w_0\in W^{2,2}(0,\pi)\cap W_{0}^{1,2}(0,\pi)$ and the initial history segment $\varphi_0\in W^{1,2}(-\tau,0;X)$, the action of $g$ is such that it can be considered as a linear and bounded diagonal operator on $X$. More precisely,  consider first \eqref{eq:heat_in_rod_by_PDEs} without the delay term i.e. the classical one-dimensional heat equation setting \cite[Chapter 2.6]{Tucsnak_Weiss}. Define
\begin{equation}\label{eq:heat_in_rod_generator_with_domain}
D(A):= W^{2,2}(0,\pi)\cap W_{0}^{1,2}(0,\pi),\quad Az:=\frac{d^2}{dx^2}z.
\end{equation}
Note that $0\in\rho(A)$. For $k\in\NN$ let $\phi_k\in D(A)$, $\phi_k(x):=\sqrt{\frac{2}{\pi}}\sin(kx)$ for every $x\in(0,\pi)$. Then $(\phi_k)_{k\in\NN}$ is an orthonormal Riesz basis in $X$ and 
\begin{equation}\label{eq:heat_in_rod_generator_eignevalues}
 A\phi_k=-k^2\phi_k \qquad \forall k\in\NN.
\end{equation}
Introduce now the delay term $g:X\to X$, $g(z):=A_1z$ where $A_1\in\calL(X)$ is such that $A_1\phi_k=\gamma_k\phi_k$ for every $k\in\NN$. We can now, using history segments, reformulate \eqref{eq:heat_in_rod_by_PDEs} into an abstract setting
\begin{equation}\label{eq:heat_in_rod_abstract}
\dot{z}(t)=Az(t)+A_1z_t(-\tau),\quad z(0)=w_0,\quad z_0=\varphi_0.
\end{equation}

Using standard Hilbert space methods and transforming system \eqref{eq:heat_in_rod_abstract} into the $l^2$ space (we use the same notation for the $l^2$ version of \eqref{eq:heat_in_rod_abstract}) and introducing control signal we obtain a retarded system of type \eqref{eq:retarded_non-autonomous_system}. The most important aspect of the above example is the sequence of eigenvalues $(\lambda_k)_{k\in\NN}=(-k^2)_{k\in\NN}$, a characteristic feature of the heat equation. Although the above heat equation is expressed using a specific Riesz basis, the idea behind remains the same. More precisely - one can redo the reasoning leading to a version of Theorem~\ref{thm:whole_system_infty_admissibility} based on a general Riesz basis instead of the standard orthonormal basis in $X$. Such approach, however, would be based on the same ideas and would inevitably suffer from a less clear presentation, and so we refrain from it.

\subsection{Eigenvalues with unbouded real part}\label{example:unbounded_real_part_eigenvalues}
Consider initially generators with unbounded real parts of their eigenvalues. For a given delay $\tau>0$ let a diagonal generator $(A,D(A))$ have a sequence of eigenvalues $(\lambda_k)_{k\in\NN}$ such that 
\begin{equation}\label{eq:example_unbounded_eigenvalues_lambda_assumption}
\lambda_k=a_k+i\beta_k\in\CC_-\quad \hbox{and}\quad a_k\to -\infty\ \hbox{as}\ k\to\infty.
\end{equation}
 
Let the operator $A_1\in\calL(X)$ be diagonal with a sequence of eigenvalues $(\gamma_k)_{k\in\NN}$. Boundedness of $A_1$ implies that there exists $M<\infty$ such that $|\gamma_k|\leq M$ for every $k\in\NN$. As $A_1$ is diagonal we easily get $|\gamma_k|\leq\|A_1\|\leq M$. Let the control operator $B$ be represented by the sequence $(b_k)_{k\in\NN}\subset\CC$.

To use Theorem~\ref{thm:whole_system_infty_admissibility} we need to assure additionally that $\gamma_k \exp^{i\beta_k\tau}\in\Lambda_{\tau,a_k}$ for every $k\in\NN$ and that the sequence $(C_k)_{k\in\NN}=(|b_k|^2 J_{a_k})_{k\in\NN}\in l^1$. However, for the former part we note that the boundedness of $A_1$ implies that there exists $N\in\NN$ such that
\begin{equation}\label{eq:example_unbounded_eigenvalues_N_index}
|\gamma_k|<|a_k|\quad\forall\ k>N.
\end{equation}
Fix such $N$. By the definition of $\Lambda_{\tau,a}$ in \eqref{eq:lambda_tau_a_neg} we see that $(\gamma_k)_{k>N}\subset\Lambda_{\tau,a_N}$. Thus the only additional assumption on operator $A_1$ we need is
\begin{equation}\label{eq:example_unbounded_eigenvalues_gamma_assumption}
\gamma_k\exp^{-i\beta_k\tau}\in\Lambda_{\tau,a_k}\quad\forall k\leq N.
\end{equation}
Assume that \eqref{eq:example_unbounded_eigenvalues_lambda_assumption} and \eqref{eq:example_unbounded_eigenvalues_gamma_assumption} hold. Then the sequence $(C_k)_{k\in\NN}\in l^1$ if and only if 
\[
\sum_{k\geq N}|C_k|=\sum_{k\geq N}|b_k|^2 J_{a_k}<\infty,
\] 
where $J_{a_k}$ is given by \eqref{eq:cost_integral_TDS_Ja} for every $k\geq N$. Let us denote $r_k:=\sqrt{a_k^2-|\gamma_k|^2}$. As $k\to\infty$ we have
\[
r_k\to\infty,\quad a_k-r_k\to -\infty,\quad a_k+r_k\to 0.
\]
and thus we obtain
\begin{equation*}
\begin{split}
\lim_{k\to\infty}\frac{|C_{k+1}|}{|C_k|}=&\lim_{k\to\infty}\frac{|b_{k+1}|^2}{|b_k|^2}\frac{J_{a_{k+1}}}{J_{a_k}}=\lim_{k\to\infty}\frac{|b_{k+1}|^2}{|b_k|^2}\frac{r_{k}}{r_{k+1}}\times\\
			&\times\frac{\exp^{r_{k+1}\tau}\big(a_{k+1}-r_{k+1}\big)
				+\exp^{-r_{k+1}\tau}\big(-a_{k+1}-r_{k+1}\big)}
				{2\Re(\gamma_{k+1}\exp^{-ib_{k+1}\tau})
				+\exp^{r_{k+1}\tau}\big(a_{k+1}-r_{k+1}\big)
				-\exp^{-r_{k+1}\tau}\big(-a_{k+1}-r_{k+1}\big)}\times\\
			&\times\frac{2\Re(\gamma_k\exp^{-ib_k\tau})
				+\exp^{r_k\tau}\big(a_k-r_k\big)
				-\exp^{-r_k\tau}\big(-a_k-r_k\big)}
				{\exp^{r_k\tau}\big(a_k-r_k\big)
				+\exp^{-r_k\tau}\big(-a_k-r_k\big)}\\
		=&\lim_{k\to\infty}\frac{|b_{k+1}|^2}{|b_k|^2}\frac{|a_k|\sqrt{1-\frac{|\gamma_k|^2}{a_k^2}}}{|a_{k+1}|\sqrt{1-\frac{|\gamma_{k+1}|^2}{a_{k+1}^2}}}\times\\
			&\times\frac{1-\exp^{-2r_{k+1}\tau}\frac{a_{k+1}+r_{k+1}}{a_{k+1}-r_{k+1}}}
				{1+\exp^{-r_{k+1}\tau}\frac{2\Re(\gamma_{k+1}\exp^{-ib_{k+1}\tau})}{a_{k+1}-r_{k+1}} 
				+\exp^{-2r_{k+1}\tau}\frac{a_{k+1}+r_{k+1}}{a_{k+1}-r_{k+1}}}\times\\
			&\times\frac{1+\exp^{-r_k\tau}\frac{2\Re(\gamma_k\exp^{-ib_k\tau})}{a_k-r_k} 
				+\exp^{-2r_k\tau}\frac{a_k+r_k}{a_k-r_k}}
				{1-\exp^{-2r_k\tau}\frac{a_k+r_k}{a_k-r_k}}\\
		=&\lim_{k\to\infty}\frac{|b_{k+1}|^2}{|b_k|^2}\frac{|a_k|}{|a_{k+1}|},
\end{split}
\end{equation*}
provided that at least one of these limits exists. The above results clearly depends on a particular set of eigenvalues.

Let us now look at the abstract heat equation \eqref{eq:heat_in_rod_abstract}. The sequence of eigenvalues in \eqref{eq:heat_in_rod_generator_eignevalues} i.e. $(\lambda_k)_{k\in\NN}=(-k^2)_{k\in\NN}$ clearly satisfies \eqref{eq:example_unbounded_eigenvalues_lambda_assumption}. For such $(\lambda_k)_{k\in\NN}$ we have
\begin{equation}\label{eq:example_unbounded_eigenvalues_C_k_ratio_limit}
\lim_{k\to\infty}\frac{\abs{C_{k+1}}}{\abs{C_k}}=\lim_{k\to\infty}\frac{\abs{b_{k+1}}^2}{\abs{b_k}^2},
\end{equation}
provided that at least one limit exists. By the d'Alembert series convergence criterion \\$\lim_{k\to\infty}\frac{\abs{C_{k+1}}}{\abs{C_k}}<1$ implies  $(C_k)_{k\in\NN}\in l^1$. 

Take the delay $\tau=1$ and assume that $A_1$ in \eqref{eq:heat_in_rod_abstract} is such that \eqref{eq:example_unbounded_eigenvalues_gamma_assumption} holds, i.e. there exists $N\in\NN$ such that $\gamma_k\in\Lambda_{1,-k^2}$ for every $k\leq N$ and $\gamma_k\in\Lambda_{1,-N^2}$ for every $k>N$.
Then, by Theorem~\ref{thm:whole_system_infty_admissibility} for $\calB=\binom{(b_k)_{k\in\NN}}{0}$ to be infinite-time admissible it is sufficient to take any $(b_k)_{k\in\NN}\in l^2$ such that 
\[
\lim_{k\to\infty}\frac{\abs{b_{k+1}}^2}{\abs{b_k}^2}<1.
\]
Note the role of the "first" eigenvalues $\gamma_k$ of $A_1$ which need to be inside consecutive $\Lambda_{\tau,-k^2}$ regions. As $A_1$ is a structural part of retarded system \eqref{eq:heat_in_rod_abstract} it may not always be possible to apply Theorem~\ref{thm:whole_system_infty_admissibility}.

\subsection{Direct state-delayed diagonal systems}\label{subsec:Direct state-delayed diagonal systems}
With small additional effort we can show that the so-called direct (or pure, see e.g. \cite{Baker_2000}) delayed system, where the delay is in the argument of the generator, is a special case of the problem analysed here.  Thus we apply our admissibility results to a dynamical system analysed in \cite{Partington_Zawiski_2019} and given by 
\begin{equation}\label{eq:diagonal_direct_delay_system}
\left\{\begin{array}{ll}
        \dot{z}(t)=Az(t-\tau)+Bu(t)\\
        z(0)=x,        \\
        z_0=f,		\\
       \end{array}
\right.
\end{equation}
where $(A,D(A))$ is a diagonal generator of a $C_0$-semigroup $(T(t))_{t\geq0}$ on $l^2$, $B$ is a control operator, $0<\tau<\infty$ is a delay and the control signal $u\in L^2(0,\infty;\CC)$. Let the sequence $(\lambda_k)_{k\in\NN}$ of the eigenvalues of $(A,D(A))$ be such that $\sup_{k\in\NN}\Re\lambda_k<0$.

We construct a setting as the one in Section~\ref{sec:3} and proceed with analysis of a $k-$th component, with a delay operator given again by point evaluation as $\Psi_k\in\mathcal{L}(W^{1,2}(-\tau,0;\CC),\CC)$, $\Psi_k(f):=\lambda_k f(-\tau)$ (we leave the index $k$ on purpose) and it is bounded as $\lambda_k$ is finite. The equivalent of \eqref{eq:k-th_component_of_diagonal_non-autonomous_system} now reads
\begin{equation}\label{eq:k-th_component_of_diagonal_non-autonomous_system_direct}
\left\{\begin{array}{ll}
        \dot{z}_k(t)=\lambda_{k}z_{k}(t-\tau)+b_ku(t)\\
        z_k(0)=x_k,        \\
        z_{0_k}=f_k,		\\
       \end{array}
\right.
\end{equation}
where the role of $\gamma_k$ in \eqref{eq:k-th_component_of_diagonal_non-autonomous_system} is played by $\lambda_k$ in \eqref{eq:k-th_component_of_diagonal_non-autonomous_system_direct}, while $\lambda_k$ of \eqref{eq:k-th_component_of_diagonal_non-autonomous_system} is $0$ in \eqref{eq:k-th_component_of_diagonal_non-autonomous_system_direct}, and this holds for every $k$. Thus, instead of a collection $\{\Lambda_{\tau,a_k}\}_{k\in\NN}$, we are concerned only with $\Lambda_{\tau,0}$. Using now Corollary~\ref{cor:cost_integral_in_TDS_without_lambda} instead of Lemma~\ref{lem:cost_integral_in_TDS}, the equivalent of Theorem~\ref{thm k-th component infty admissiblity} in the direct state-delayed setting takes the form
%
%
%
\begin{thm}\label{thm:k-th_component_infty_admissiblity_direct}
Let $\tau>0$ and take  $\lambda_k \in \Lambda_{\tau,0}$. Then the control operator $\mathcal{B}=\binom{b_k}{0}$ for the system based on \eqref{eq:k-th_component_of_diagonal_non-autonomous_system_direct}  is infinite-time admissible for every $u\in L^2(0,\infty;\CC)$ and 
\[
\norm{\Phi_{\infty}u}_{\mathcal{X}}^2\leq (1+\tau)\abs{b_k}^{2}\frac{-\cos(|\lambda_k|\tau)}{2\big(\Re(\lambda_k)+|\lambda_k|\sin(|\lambda_k|\tau)\big)}\norm{u}_{L^2(0,\infty;\CC)}^{2}.
\]
\qed
\end{thm}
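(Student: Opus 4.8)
The plan is to recognise Theorem~\ref{thm:k-th_component_infty_admissiblity_direct} as the specialisation of Theorem~\ref{thm k-th component infty admissiblity} obtained by sending the non-delayed coefficient to zero. First I would observe that the component system \eqref{eq:k-th_component_of_diagonal_non-autonomous_system_direct} is exactly the general component system \eqref{eq:k-th_component_of_diagonal_non-autonomous_system} under the identification in which the coefficient multiplying $z_k(t)$ is $0$ and the coefficient multiplying $z_k(t-\tau)$ is $\lambda_k$; that is, in the notation of Subsection~\ref{subsection:analysis_of_a_single_component} one takes $\lambda=0$ (so $a=0$ and $\beta=0$) and $\gamma=\lambda_k$ in the delay operator $\Psi(f)=\gamma f(-\tau)$. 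Consequently every structural result already established for the general component — well-posedness (Proposition~\ref{prop:wellposedness_of_k-th_component_ACP}) and the resolvent formula (Proposition~\ref{prop:k-th_component_abstract_A_resolvent_operator}) — transfers verbatim under this substitution, the only change being that $R_{11}(s)=\frac{1}{s-\lambda_k\exp^{-s\tau}}$.

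Next I would verify that the hypotheses of Theorem~\ref{thm k-th component infty admissiblity} are met. With $\lambda=0$ we have $\re\lambda=0<\frac{1}{\tau}$ for every $\tau>0$, so $\lambda\in\underleftarrow{\CC}_{\frac{1}{\tau}}$ automatically, and the condition $\gamma\exp^{-i\beta\tau}\in\Lambda_{\tau,a}$ collapses to $\lambda_k\in\Lambda_{\tau,0}$, which is precisely the standing assumption. Analyticity of $R_{11}$ in $\CC_+$, required for the Hardy-space argument, then follows from Proposition~\ref{cor:stability_of_complex_polynomial}(i) with $a=0$. Applying Theorem~\ref{thm k-th component infty admissiblity} now gives infinite-time admissibility of $\mathcal{B}=\binom{b_k}{0}$ together with the bound $\norm{\Phi_\infty u}_{\mathcal{X}}^2\leq(1+\tau)\abs{b_k}^2 J\norm{u}_{L^2(0,\infty;\CC)}^2$, where $J=\frac{1}{2\pi}\int_{-\infty}^{\infty}\frac{d\omega}{\abs{i\omega-\lambda_k\exp^{-i\omega\tau}}^2}$ since the non-delayed coefficient in the integrand has vanished.

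Finally I would evaluate $J$ in closed form. Because $\lambda=0$ and $\lambda_k\in\Lambda_{\tau,0}$, this is exactly the integral computed in Corollary~\ref{cor:cost_integral_in_TDS_without_lambda} (the $a=0$, and hence $\abs{\gamma}>\abs{a}$, reduction of the branch $J_\gamma$ of Lemma~\ref{lem:cost_integral_in_TDS}), so $J=J_0=-\frac{\cos(\abs{\lambda_k}\tau)}{2(\re\lambda_k+\abs{\lambda_k}\sin(\abs{\lambda_k}\tau))}$. Substituting this value for $J$ into the estimate yields precisely the asserted inequality. Since all the analytic work has been front-loaded into Theorem~\ref{thm k-th component infty admissiblity} and Corollary~\ref{cor:cost_integral_in_TDS_without_lambda}, there is no genuine obstacle; the only points demanding care are confirming that $a=0$ is a legitimate value in the $\Lambda_{\tau,a}$ framework (it is, by \eqref{eq:lambda_tau_a_zero}) and checking, by inspection of the integrand, that dropping the non-delayed term is exactly what turns the general cost integral into $J_0$.
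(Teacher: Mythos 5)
Your proposal is correct and takes essentially the same route as the paper: the paper likewise notes that the role of $\gamma$ is played by $\lambda_k$ while the undelayed coefficient is $0$ (so only $\Lambda_{\tau,0}$ is relevant), and states that Theorem~\ref{thm:k-th_component_infty_admissiblity_direct} is an immediate consequence of Theorem~\ref{thm k-th component infty admissiblity} with the cost integral evaluated via Corollary~\ref{cor:cost_integral_in_TDS_without_lambda}. Your additional verifications --- that $\lambda=0$ lies in $\underleftarrow{\CC}_{\frac{1}{\tau}}$, and that $J_0$ is the $a=0$ specialisation of the $J_\gamma$ branch --- are exactly the implicit checks behind the paper's one-line justification.
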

As Theorem~\ref{thm:k-th_component_infty_admissiblity_direct} refers only to $k$-component it is an immediate consequence of Theorem~\ref{thm k-th component infty admissiblity}.  Using the same approach of summing over components the equivalent of Theorem~\ref{thm:whole_system_infty_admissibility} takes the form
\begin{thm}\label{thm:whole_system_infty_admissibility_direct}
 Let for the given delay $\tau\in(0,\infty)$ the sequence $(\lambda_k)_{k\in\NN}\subset\Lambda_{\tau,0}$. Then the control operator $\mathcal{B}\in\mathcal{L}(\CC,\mathcal{X}_{-1})$ for the system based on \eqref{eq:diagonal_direct_delay_system} and given by $\mathcal{B}=\binom{(b_k)_{k\in\NN}}{0}$ is infinite-time admissible if the sequence $(C_k)_{k\in\NN}\in l^1$, where
\begin{equation}\label{eq:whole_sys_infty_admissibility_ceofficient_direct}
 C_k:=\abs{b_k}^{2}\frac{-\cos(|\lambda_k|\tau)}{2\big(\Re(\lambda_k)+|\lambda_k|\sin(|\lambda_k|\tau)\big)}.
\end{equation}
\qed
\end{thm}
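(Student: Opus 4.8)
The plan is to mirror, almost verbatim, the proof of Theorem~\ref{thm:whole_system_infty_admissibility}, exploiting the fact that the direct system \eqref{eq:diagonal_direct_delay_system} decomposes into the scalar components \eqref{eq:k-th_component_of_diagonal_non-autonomous_system_direct}, each of which is the special case of \eqref{eq:k-th_component_of_diagonal_non-autonomous_system} in which the role of $\gamma_k$ is taken by $\lambda_k$ and the ``internal'' eigenvalue is $0$. First I would introduce the infinite-time forcing operator $\Phi_\infty(u)=\int_0^\infty \mathcal{T}(t)\mathcal{B}u(t)\,dt$ for the whole system and, using the product structure of $\mathcal{X}=l^2\times L^2(-\tau,0;l^2)$, write it as the coordinate sum $\Phi_\infty(u)=\sum_{k\in\NN}\Phi_{\infty_k}(u)$, exactly as in \eqref{eq:forcing_operator_coordinate_decomposition}, where each $\Phi_{\infty_k}(u)$ lives in the mutually orthogonal $k$-th coordinate subspace of $\mathcal{X}$.

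Next I would certify that each scalar component is well-behaved. Here the hypothesis $\lambda_k\in\Lambda_{\tau,0}$ plays, for the parameter $a=0$, the part that $\gamma_k\exp^{-i\beta_k\tau}\in\Lambda_{\tau,a_k}$ played in the general case: by Proposition~\ref{cor:stability_of_complex_polynomial} (with $a=0$), all zeros of $s-\lambda_k\exp^{-s\tau}$ lie in $\CC_-$, so the scalar resolvent component $R_{11}(s)=1/(s-\lambda_k\exp^{-s\tau})$ is analytic on $\CC_+$ and, by Corollary~\ref{cor:cost_integral_in_TDS_without_lambda}, its boundary trace lies in $L^2(i\RR)$ with
\[
\frac{1}{2\pi}\int_{-\infty}^{\infty}\frac{d\omega}{\vert i\omega-\lambda_k\exp^{-i\omega\tau}\vert^2}=J_{0,k}=\frac{-\cos(|\lambda_k|\tau)}{2\big(\Re(\lambda_k)+|\lambda_k|\sin(|\lambda_k|\tau)\big)}.
\]
Theorem~\ref{thm:k-th_component_infty_admissiblity_direct} then furnishes the per-component bound $\norm{\Phi_{\infty_k}(u)}_{\mathcal{X}}^2\leq(1+\tau)\,C_k\,\norm{u}_{L^2(0,\infty;\CC)}^2$, with $C_k$ as in \eqref{eq:whole_sys_infty_admissibility_ceofficient_direct}; note that $C_k\geq0$ because $J_{0,k}$ is a nonnegative $L^2$-integral, so $\sum_{k\in\NN} C_k=\sum_{k\in\NN}|C_k|$.

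Finally I would assemble the global estimate. Using the orthogonality of the coordinate subspaces, exactly as in the norm computation \eqref{eq:abstract_Cauchy_problem_mild_solution_norm_in_X}, I obtain $\norm{\Phi_\infty(u)}_{\mathcal{X}}^2=\sum_{k\in\NN}\norm{\Phi_{\infty_k}(u)}_{\mathcal{X}}^2\leq(1+\tau)\big(\sum_{k\in\NN}C_k\big)\norm{u}_{L^2(0,\infty;\CC)}^2$, which is finite precisely because $(C_k)_{k\in\NN}\in l^1$. This establishes boundedness of $\Phi_\infty$ as an operator from $L^2(0,\infty;\CC)$ into $\mathcal{X}$ and hence the claimed infinite-time admissibility. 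I expect the only delicate point to be the justification of $\norm{\sum_k\Phi_{\infty_k}(u)}_{\mathcal{X}}^2=\sum_k\norm{\Phi_{\infty_k}(u)}_{\mathcal{X}}^2$, i.e. that the partial sums converge in $\mathcal{X}$ rather than merely in $\mathcal{X}_{-1}$; but this is already secured by the $l^1$-summability of $(C_k)$ together with the orthogonal decomposition, so it reduces to the same routine argument used for Theorem~\ref{thm:whole_system_infty_admissibility}.
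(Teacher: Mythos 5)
Your proposal is correct and takes essentially the same approach as the paper, which likewise obtains Theorem~\ref{thm:whole_system_infty_admissibility_direct} by specializing the componentwise bound (Theorem~\ref{thm:k-th_component_infty_admissiblity_direct}, via Corollary~\ref{cor:cost_integral_in_TDS_without_lambda} with $\lambda=0$ and the delay coefficient $\lambda_k\in\Lambda_{\tau,0}$) and then summing over the orthogonal coordinate subspaces exactly as in the proof of Theorem~\ref{thm:whole_system_infty_admissibility}. Your observation that $J_{0,k}\geq0$ forces $\sum_{k\in\NN}C_k=\sum_{k\in\NN}\abs{C_k}$ is a small point the paper leaves implicit.
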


Note that the assumption that $\lambda_k\in\Lambda_{\tau,0}$ for every $k\in\NN$, due to boundedness of the $\Lambda_{\tau,0}$ set, implies that $A$ is in fact a bounded operator. While the result of Theorem~\ref{thm:whole_system_infty_admissibility_direct} is correct, it is not directly useful in analysis of unbounded operators. Instead, its usefulness follows from the the so-called reciprocal system approach. For a detailed presentation of the reciprocal system approach see \cite{Curtain_2003}, while for its application see \cite{Partington_Zawiski_2019}. We note here only that as there is some sort of symmetry in admissibility analysis of a given undelayed system and its reciprocal, introduction of a delay breaks this symmetry. In the current context consider the example of the next section.
\begin{rem}
In \cite{Partington_Zawiski_2019} the result corresponding to Theorem~\ref{thm:whole_system_infty_admissibility_direct} uses a sequence $(C_k)_{k\in\NN}$ which based not only on a control operator and eigenvalues of the generator, but also on some constants $\delta_k$ and $m_k$ so that $C_k=C_k(b_k,\lambda_k,\delta_k,m_k)$. As $\delta_k$ and $m_k$ originate from the proof of the result corresponding to Theorem~\ref{thm:k-th_component_infty_admissiblity_direct}, it requires additional effort to make the condition based on them useful. In the current form of Theorem~\ref{thm:whole_system_infty_admissibility_direct} this problem does not exist and the convergence of \eqref{eq:whole_sys_infty_admissibility_ceofficient_direct} depends only on the relation between eigenvalues of the generator and the control operator. 
\end{rem}
\subsection{Bounded real eigenvalues}\label{example:bouneded_real_eigenvalues}

In a diagonal framework of Example~\ref{subsec:Direct state-delayed diagonal systems} let us consider, for a given delay $\tau$, a sequence $(\lambda_k)_{k\in\NN}\subset\RR\cap\Lambda_{\tau,0}$ such that $\lambda_k\to 0$ as $k\to\infty$. In particular, let $\lambda_k:=(-\frac{\pi}{2}+\varepsilon)\tau^{-1}k^{-2}$ for some sufficiently small $0<\varepsilon<\frac{\pi}{2}$. Such sequence of $\lambda_k$ typically arises when considering a reciprocal system of a undelayed heat equation, as is easily seen by \eqref{eq:heat_in_rod_generator_eignevalues}. The ratio of absolute values of two consecutive coefficients \eqref{eq:whole_sys_infty_admissibility_ceofficient_direct} is
\[
\frac{\abs{C_{k+1}}}{\abs{C_k}}=\frac{\abs{b_{k+1}}^2}{\abs{b_k}^2}\frac{|\cos(\abs{\lambda_{k+1}}\tau)|}{|\cos(\abs{\lambda_k}\tau)|}\frac{\abs{\lambda_k}}{\abs{\lambda_{k+1}}}
\frac{\big|1-\sin(|\lambda_k|\tau)|\big|}{\big|1-\sin(|\lambda_{k+1}|\tau)|\big|}.
\]
It is easy to see that 
\begin{equation}\label{eq:dAlambert_limit}
\lim_{k\to\infty}\frac{\abs{C_{k+1}}}{\abs{C_k}}=\lim_{k\to\infty}\frac{\abs{b_{k+1}}^2}{\abs{b_k}^2},
\end{equation}
provided that at least one of these limits exists. By the d'Alembert series convergence criterion $\lim_{k\to\infty}\frac{\abs{C_{k+1}}}{\abs{C_k}}<1$ implies  $(C_k)_{k\in\NN}\in l^1$. Thus, by \eqref{eq:dAlambert_limit} for $\calB=\binom{(b_k)_{k\in\NN}}{0}$ to be infinite-time admissible for system \eqref{eq:diagonal_direct_delay_system} it is sufficient to take any $(b_k)_{k\in\NN}\in l^2$ such that 
\begin{equation}\label{eq:bounded_real_eig_example_b_condition}
\lim_{k\to\infty}\frac{\abs{b_{k+1}}^2}{\abs{b_k}^2}<1.
\end{equation}

\section{Appendix}
\subsection{Proof of Lemma \ref{lem:cost_integral_in_TDS}}
We rewrite $J$ as
\begin{equation}\label{eq:cost_integral_in_TDS}
\begin{split}
J=&\frac{1}{2\pi}\int_{-\infty}^{\infty}\frac{d\omega}{\vert i\omega-\lambda-\gamma\exp^{-i\omega\tau}\vert^2}\\
=&\frac{1}{2\pi }\int_{-\infty}^{\infty}\frac{d\omega}{(i\omega-\lambda-\gamma\exp^{-i\omega\tau})(-i\omega-\overbar{\lambda}-\overbar{\gamma}\exp^{i\omega\tau})}\\
=&\frac{1}{2\pi i}\int_{-i\infty}^{i\infty}\frac{ds}{(s-\lambda-\gamma\exp^{-s\tau})(-s-\overbar{\lambda}-\overbar{\gamma}\exp^{s\tau})}\\
=&\frac{1}{2\pi i}\int_{-i\infty}^{i\infty}E_1(s)E_2(s)\,ds,
\end{split}
\end{equation}
where
\begin{equation}\label{eq:cost_integra_integrands}
E_1(s):=\frac{1}{s-\lambda-\gamma\exp^{-s\tau}}, \qquad E_2(s):=\frac{1}{-s-\overbar{\lambda}-\overbar{\gamma}\exp^{s\tau}}.
\end{equation}
Note that writing explicitly $E_1$ and $E_2$ as functions of $s$ and parameters $\lambda$, $\gamma$ and $\tau$ we have
\[
E_1(s,\lambda,\gamma,\tau)=E_2(-s,\overbar{\lambda},\overbar{\gamma},\tau).
\]
Let $\mathcal{E}_1$ be the set of poles of $E_1$ and $\mathcal{E}_2$ be the set of poles of $E_2$. As, by assumption,  $\gamma\exp^{-ib\tau}\in\Lambda_{\tau,a}$ Proposition \ref{cor:stability_of_complex_polynomial} states that $\mathcal{E}_1\subset\CC_-$ and $\mathcal{E}_2\subset\CC_+$. Thus we have that $E_1$ is analytic in $\CC\setminus\CC_-$ while $E_2$ is analytic in $\CC\setminus\CC_+$. 

Let $s_n\in\mathcal{E}_1$, i.e. $s_n-\lambda-\gamma\exp^{-s_n\tau}=0$. Rearranging gives
\[
\frac{\gamma}{s_n-\lambda}=\exp^{s_n\tau}.
\] 
Substituting above to $E_2$ gives
\begin{equation}\label{eq:cost_integral_integrand_E2_at_poles_of_E1}
E_2(s_n)=-\frac{s_n-\lambda}{(s_n+\overbar{\lambda})(s_n-\lambda)+\abs{\gamma}^2}.
\end{equation}
and this value is finite as $s_n\not\in\mathcal{E}_2$.
Rearranging \eqref{eq:cost_integral_in_TDS} to account for  \eqref{eq:cost_integral_integrand_E2_at_poles_of_E1} gives
\begin{equation}\label{eq:cost_integral_extended}
J=\frac{1}{2\pi i}\int_{-i\infty}^{i\infty}\Bigg(E_1(s)\bigg(E_2(s)+\frac{s-\lambda}{(s+\overbar{\lambda})(s-\lambda)+\abs{\gamma}^2}\bigg)-E_1(s)\frac{s-\lambda}{(s+\overbar{\lambda})(s-\lambda)+\abs{\gamma}^2}\Bigg)\,ds.
\end{equation}
The above integrand has no poles at the roots $\{z_1,z_2\}$ of 
\begin{equation}\label{eq:cost_integral_additional_poles}
(s+\overbar{\lambda})(s-\lambda)+\abs{\gamma}^2=0.
\end{equation} 
However, as the two parts of the integrand in \eqref{eq:cost_integral_extended} will be treated separately, we need to consider poles introduced by $z_1$ and $z_2$ with regard to the contour of integration. Rewrite also \eqref{eq:cost_integral_additional_poles} as  
\begin{equation}\label{eq:cost_integral_additional_poles_definition}
(s+\overbar{\lambda})(s-\lambda)+\abs{\gamma}^2=(s-z_1)(s-z_2)=0.
\end{equation}
From this point onwards we analyse three cases given by the right side of \eqref{eq:cost_integral_TDS_full_def}. Assume first that $|\gamma|<|a|$. Then 
\begin{equation}\label{eq:cost_integral_additional_poles_Ja}
z_1=-\sqrt{a^2-|\gamma|^2}+ib,\quad z_2=\sqrt{a^2-|\gamma|^2}+ib.
\end{equation}
Figure~\ref{fig:cost_integral_contours}a shows integration contours $\Gamma_1(r)=\Gamma_I(r)+\Gamma_L(r)$ and $\Gamma_2(r)=\Gamma_I(r)+\Gamma_R(r)$ for $r\in(0,\infty)$ used for calculation of $J$. In particular  $\Gamma_I$ runs along the imaginary axis, $\Gamma_L$ is a left semicircle and $\Gamma_R$ is a right semicircle.
\begin{figure}[tb]\label{fig:Lambda_tau_blob}
\begin{center}
\includegraphics[scale=0.5]{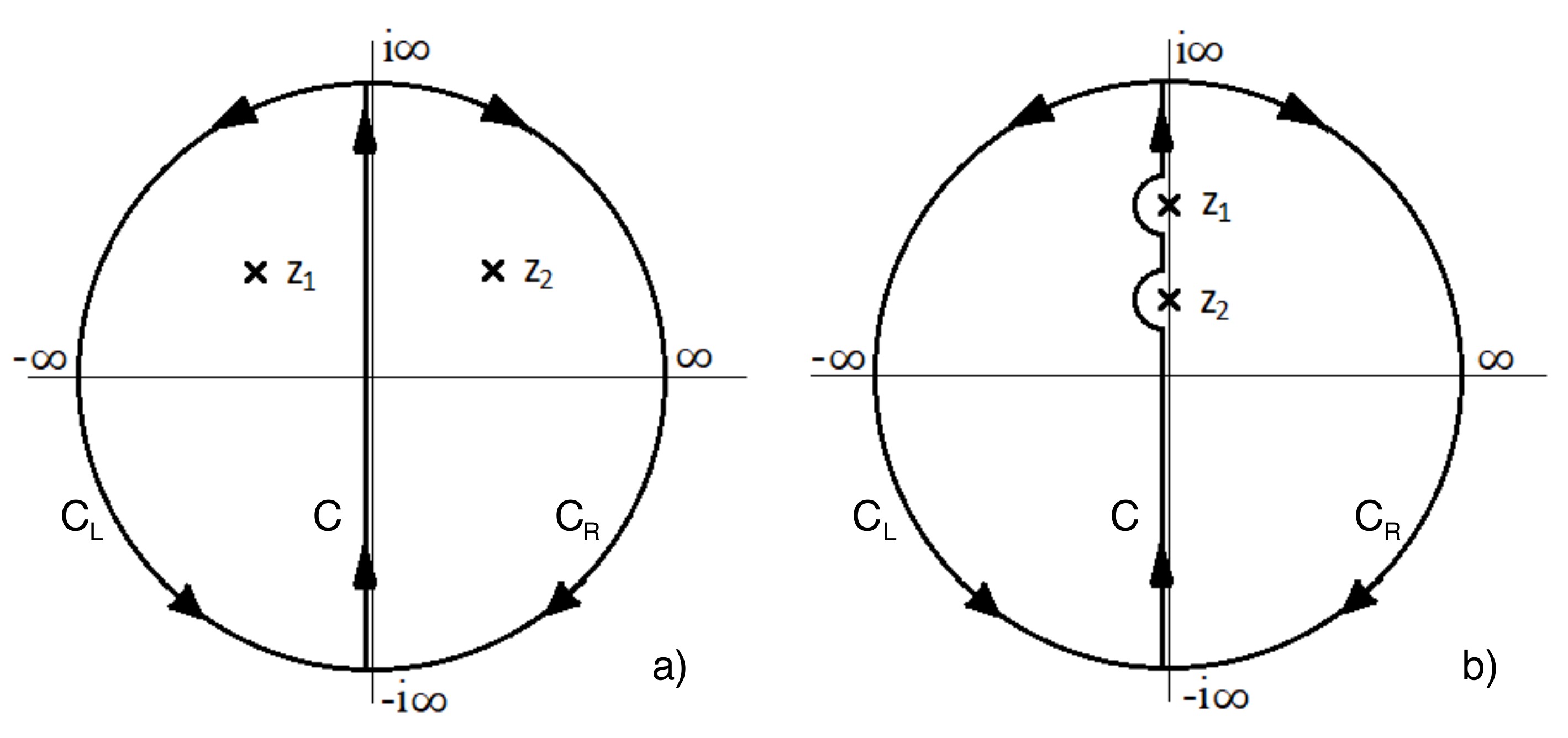} 
\end{center}
\caption{Contours of integration in \eqref{eq:cost_integral_extended}: part $a)$ is used for the case $|\gamma|<|a|$, part $b)$ is used when $|\gamma|\geq|a|$. Both parts are drawn for a sufficiently large $r$ so that $\Gamma_I(r)=C$, $\Gamma_L(r)=C_L$ and $\Gamma_R(r)=C_R$ and they enclose particular values of $z_1$ and $z_2$ in \eqref{eq:cost_integral_additional_poles_Ja} and \eqref{eq:cost_integral_additional_poles_Jg}, respectively. The location of infinitesimally small semicircles around $z_1$ and $z_2$ in part $b)$ is to be modified depending on the location of $z_1$ and $z_2$ on the imaginary axis.}
\label{fig:cost_integral_contours}
\end{figure}
Due to the above argument for a sufficiently large $r$ we get
\begin{equation}\label{eq:cost_integral_extended_contours}
\begin{split}
J&=\frac{1}{2\pi i}\lim_{r\to\infty}\int_{\Gamma_I(r)}\Bigg(E_1(s)\bigg(E_2(s)+\frac{s-\lambda}{(s+\overbar{\lambda})(s-\lambda)+\abs{\gamma}^2}\bigg)-E_1(s)\frac{s-\lambda}{(s+\overbar{\lambda})(s-\lambda)+\abs{\gamma}^2}\Bigg)\,ds\\
&=\frac{1}{2\pi i}\int_{C+C_L}E_1(s)\bigg(E_2(s)+\frac{s-\lambda}{(s-z_1)(s-z_2)}\bigg)\,ds\\
&\quad-\frac{1}{2\pi i}\int_{C+C_R}E_1(s)\frac{s-\lambda}{(s-z_1)(s-z_2)}\,ds.
\end{split}
\end{equation}
In calculation of the above we used the fact both integrals round the semicircles at infinity are zero as the integrands are, at most, of order $s^{-2}$ and for every fixed $\varphi, \lambda, \gamma, \tau$,
\begin{equation}\label{eq:cost_integral_at_infinity}
\lim_{r\rightarrow\infty}\frac{1}{r\exp^{i\varphi}-\lambda-\gamma\exp^{-r\exp^{i\varphi}\tau}}=0.
\end{equation}
Define separate parts of \eqref{eq:cost_integral_extended_contours} as 
\begin{equation}\label{eq:cost_integral_left_part}
J_L:=\frac{1}{2\pi i}\int_{C+C_L}E_1(s)\bigg(E_2(s)+\frac{s-\lambda}{(s-z_1)(s-z_2)}\bigg)\,ds
\end{equation}
and
\begin{equation}\label{eq:cost_integral_right_part}
J_R:=-\frac{1}{2\pi i}\int_{C+C_R}E_1(s)\frac{s-\lambda}{(s-z_1)(s-z_2)}\,ds
\end{equation}
and consider them separately. 

To calculate $J_L$ note that from \eqref{eq:cost_integral_integrand_E2_at_poles_of_E1} it follows that for every $s_n\in\mathcal{E}_1$ the value
\[
E_2(s_n)=-\frac{s_n-\lambda}{(s_n-z_1)(s_n-z_2)}
\]
is finite and that implies that $\{z_1,z_2\}\cap\mathcal{E}_1=\emptyset$. Thus the only pole of the integrand in \eqref{eq:cost_integral_left_part} encircled by the $C+C_L$ contour is at $z_1$. Denoting this integrand by $f$ the residue formula gives
\[
\hbox{Res}_{z_1}f(s)=\lim_{s\to z_1}(s-z_1)f(s)=E_1(z_1)\frac{z_1-\lambda}{z_1-z_2}.
\]
As the $C+C_L$ contour is counter-clockwise we obtain
\begin{equation}\label{eq:cost_integral_J_L_for_Ja}
J_L=E_1(z_1)\frac{z_1-\lambda}{z_1-z_2}.
\end{equation}
To calculate $J_R$ note that the only pole encircled by the $C+C_R$ contour is at $z_2$. Denoting the integrand of \eqref{eq:cost_integral_right_part} by $g$ the residue formula gives
\[
\hbox{Res}_{z_2}g(s)=\lim_{s\to z_2}(s-z_2)g(s)=E_1(z_2)\frac{z_2-\lambda}{z_2-z_1}.
\]
As the $C+C_R$ contour is clockwise we obtain
\begin{equation}\label{eq:cost_integral_J_R_for_Ja}
J_R=E_1(z_2)\frac{z_2-\lambda}{z_2-z_1}.
\end{equation}
Thus we obtain
\begin{equation}\label{eq:cost_integral_J_E_first_result}
J=J_L+J_R=\frac{1}{z_2-z_1}\Big((\lambda-z_1)E_1(z_1)+(z_2-\lambda)E_1(z_2)\Big),
\end{equation}
where $z_1,z_2$ are given by \eqref{eq:cost_integral_additional_poles_Ja}. We substitute these values for $z_1$ and $z_2$ and perform tedious calculations to obtain
\begin{align*}
J=&\frac{1}{2\sqrt{a^2-|\gamma|^2}}\times\\
	&\frac{\gamma\exp^{-ib\tau}\Big(\exp^{\sqrt{a^2-|\gamma|^2}\tau}(a-\sqrt{a^2-|\gamma|^2})+\exp^{-			\sqrt{a^2-|\gamma|^2}\tau}(-a-\sqrt{a^2-|\gamma|^2})\Big)}
		{\gamma\exp^{-ib\tau}\Big(\overbar{\gamma}\exp^{ib\tau}-\exp^{-\sqrt{a^2-|\gamma|^2}\tau}(-\sqrt{a^2-|\gamma|^2}-a)-\exp^{\sqrt{a^2-|\gamma|^2}\tau}(\sqrt{a^2-|\gamma|^2}-a)+\gamma\exp^{-ib\tau}\Big)}\\
		=&\frac{1}{2\sqrt{a^2-|\gamma|^2}}\times\\
	&\frac{\exp^{\sqrt{a^2-|\gamma|^2}\tau}\big(a-\sqrt{a^2-|\gamma|^2}\big)
			+\exp^{-\sqrt{a^2-|\gamma|^2}\tau}\big(-a-\sqrt{a^2-|\gamma|^2}\big)}
			{2\Re(\gamma\exp^{-ib\tau})
			+\exp^{\sqrt{a^2-|\gamma|^2}\tau}\big(a-\sqrt{a^2-|\gamma|^2}\big)
			-\exp^{-\sqrt{a^2-|\gamma|^2}\tau}\big(-a-\sqrt{a^2-|\gamma|^2}\big)}.
\end{align*}
Assume now that $|\gamma|>|a|$. The roots $\{z_1,z_2\}$ of \eqref{eq:cost_integral_additional_poles_definition} are
\begin{equation}\label{eq:cost_integral_additional_poles_Jg}
z_1=i\sqrt{a^2-|\gamma|^2}+ib,\quad z_2=-i\sqrt{a^2-|\gamma|^2}+ib.
\end{equation}
To calculate $J$ in \eqref{eq:cost_integral_extended} we now use the contour shown in Figure~\ref{fig:cost_integral_contours}b. We again define $J_L$ and $J_R$ as in \eqref{eq:cost_integral_left_part} and \eqref{eq:cost_integral_right_part}, respectively, but with this new contour. 

As $\gamma\exp^{-ib\tau}\in\Lambda_{\tau,a}$ by Proposition~\ref{cor:stability_of_complex_polynomial} no pole of $E_1$ lies on the imaginary axis. Hence no pole of the integrand in \eqref{eq:cost_integral_left_part} is encircled by the $C+C_L$ contour and this gives 
\begin{equation}\label{eq:cost_integral_J_L_for_Jg}
J_L=0.
\end{equation}
For $J_R$ the only poles of the integrand of \eqref{eq:cost_integral_right_part} encircled by the $C+C_R$ contour are $z_1$ and $z_2$. Denoting this integrand by $g$ the residue formula gives 
\[
\hbox{Res}_{z_1}g(s)=E_1(z_1)\frac{z_1-\lambda}{z_1-z_2},\quad \hbox{Res}_{z_2}g(s)=E_1(z_2)\frac{z_2-\lambda}{z_2-z_1}.
\]
As the $C+C_R$ contour is clockwise we obtain
\begin{equation}\label{eq:cost_integral_J_R_for_Jg}
J_R=E_1(z_1)\frac{z_1-\lambda}{z_1-z_2}+E_1(z_2)\frac{z_2-\lambda}{z_2-z_1}.
\end{equation}
Thus we obtain
\begin{equation}\label{eq:cost_integral_J_E_for_Jg}
J=J_L+J_R=\frac{1}{z_2-z_1}\Big((\lambda-z_1)E_1(z_1)+(z_2-\lambda)E_1(z_2)\Big),
\end{equation}
where $z_1,z_2$ are given by \eqref{eq:cost_integral_additional_poles_Jg}. Substituting these values, again after tedious calculations, we obtain 
\begin{align*}
J=&\frac{1}{2i\sqrt{|\gamma|^2-a^2}}\times\\
	&\frac{a\Big(\exp^{i\sqrt{|\gamma|^2-a^2}\tau}-\exp^{-i\sqrt{|\gamma|^2-a^2}\tau}\Big)
				-i\sqrt{|\gamma|^2-a^2}\Big(\exp^{i\sqrt{|\gamma|^2-a^2}\tau}+\exp^{-i\sqrt{|\gamma|^2-a^2}\tau}\Big)}
		{2\Re(\gamma\exp^{-ib\tau})+a\Big(\exp^{i\sqrt{|\gamma|^2-a^2}\tau}+\exp^{-i\sqrt{|\gamma|^2-a^2}\tau}\Big)
		-i\sqrt{|\gamma|^2-a^2}\Big(\exp^{i\sqrt{|\gamma|^2-a^2}\tau}-\exp^{-i\sqrt{|\gamma|^2-a^2}\tau}\Big)}\\
	=&\frac{1}{2\sqrt{|\gamma|^2-a^2}}\times\\
	&\frac{a\sin\big(\sqrt{|\gamma|^2-a^2}\tau\big)-\sqrt{|\gamma|^2-a^2}\cos\big(\sqrt{|\gamma|^2-a^2}\tau\big)}{\Re(\gamma\exp^{-ib\tau})+a\cos\big(\sqrt{|\gamma|^2-a^2}\tau\big)+\sqrt{|\gamma|^2-a^2}\sin\big(\sqrt{|\gamma|^2-a^2}\tau\big)}.
\end{align*}
For the last case assume that $|\gamma|=|a|>0$, as the assumption $\gamma\exp^{-b\tau}\in\Lambda_{\tau,0}$ excludes the case $|a|=|\gamma|=0$ because $0\not\in\Lambda_{\tau,0}$. Instead of $\{z_1,z_2\}$ we now have a single double root $z_0$ of \eqref{eq:cost_integral_additional_poles_definition} given by
\begin{equation}\label{eq:cost_integral_additional_poles_Je}
z_0=ib.
\end{equation}
As $z_0$ lies on the imaginary axis we use the contour shown in Figure~\ref{fig:cost_integral_contours}b tailored to the case $z_1=z_2=z_0$. Define $J_L$ and $J_R$ as in \eqref{eq:cost_integral_left_part} and \eqref{eq:cost_integral_right_part}, respectively, but with the contour tailored for $z_0$. For the same reasons as in \eqref{eq:cost_integral_J_L_for_Jg} we have 
\begin{equation}\label{eq:cost_integral_J_L_for_Je}
J_L=0.
\end{equation}
For $J_R$ the only pole of the integrand of \eqref{eq:cost_integral_right_part} encircled by the $C+C_R$ contour is $z_0$. Denoting this integrand by $g$ the residue formula for a double root gives 
\[
\hbox{Res}_{z_0}g(s)=\lim_{s\to z_0}\frac{d}{ds}\left((s-z_0)^2g(s)\right)=\frac{(a\tau-1)\gamma\exp^{-ib\tau}}{(a+\gamma\exp^{-ib\tau})^2}.
\]
With the current assumption we have that $a^2=\gamma\overbar{\gamma}$. By this and the fact that the $C+C_R$ contour is clockwise we obtain
\begin{equation}\label{eq:cost_integral_J_R_for_Je}
J_R=\frac{1}{2}\frac{a\tau-1}{\re(\gamma\exp^{-ib\tau})+a}.
\end{equation}
As $J=J_L+J_R$ this finishes the proof.\qed
\section{Acknowledgements}
The authors would like to thank Prof. Yuriy Tomilov for many valuable comments and mentioning to them reference \cite{Kappel_1986}. 

\section{Declarations as required by Springer Nature}
\subsection{Funding}

Rafał Kapica's research was supported by the Faculty of Applied Mathematics AGH UST statutory tasks within subsidy of Ministry of Education and Science.\\

\noindent Jonathan R. Partington indicates no external funding.\\

Radosław Zawiski's work was performed when he was a visiting researcher at the Centre for Mathematical Sciences of the Lund University, hosted by Sandra Pott, and supported by the Polish National Agency for Academic Exchange (NAWA) within the Bekker programme under the agreement PPN/BEK/2020/1/00226/U/00001/A/00001.

\subsection{Competing interests}
The authors have no competing interests as defined by Springer, or other interests that might be perceived to influence the results and/or discussion reported in this paper.

\subsection{Authors' contributions}
JRP and RZ are responsible for the initial conception of the research and the approach method.  
RZ performed the research concerning every element needed for the single component as well as the whole system analysis.  Examples for unbounded generators were provided by RK and RZ, while examples for direct state-delayed systems come from the work of JRP and RZ. Figures 1 - 2 were prepared by RZ. All authors participated in writing the manuscript. All authors reviewed the manuscript.


\providecommand{\bysame}{\leavevmode\hbox to3em{\hrulefill}\thinspace}
\providecommand{\MR}{\relax\ifhmode\unskip\space\fi MR }
\providecommand{\MRhref}[2]{%
  \href{http://www.ams.org/mathscinet-getitem?mr=#1}{#2}
}
\providecommand{\href}[2]{#2}

\end{document}